\theoremstyle{plain}
\newtheorem{theorem}{Theorem}[section]
\theoremstyle{remark}
\theoremstyle{plain}
\newtheorem{corollary}[theorem]{Corollary}
\newtheorem{lemma}[theorem]{Lemma}
\newtheorem{proposition}[theorem]{Proposition}
\newtheorem{definition}[theorem]{Definition}
\newtheorem{assumption}{Assumption}
\numberwithin{equation}{section}
\newcommand{\HL}{\fourIdx{}{}{\!\!\!\mathrm{H}}{}\DDelta} 
\newcommand{\BL}{\fourIdx{}{}{\!\!\!\mathrm{B}}{}\DDelta}  
\newcommand{\LB}{\fourIdx{}{}{\!\mathrm{LB}}{}\Delta}  
\newcommand{\Ric}{\mathrm{Ric}}
\newcommand\dela[1]{}
\newcommand\coma[1]{\color{red} {#1 }}
\newcommand\deld[1]{}
\newcommand{\be}{\begin{equation}}
\newcommand{\ee}{\end{equation}}
\newcommand{\ba}{\begin{array}}
\newcommand{\ea}{\end{array}}
\newcommand{\beas}{\begin{eqnarray*}}
\newcommand{\eeas}{\end{eqnarray*}}
\newcommand{\bea}{\begin{eqnarray}}
\newcommand{\eea}{\end{eqnarray}}
\newcommand{\lb}{\label}
\newcommand{\Curl}{{\mathrm{Curl}} \, }
\newcommand{\Curln}{{\mathrm{curl} \,}  }
\newcommand{\Div}{\mathrm{div} \, }
\newcommand{\Grad}{{\nabla} \, }
\newcommand{\DDelta}{\boldsymbol{\mathrm{\Delta}} \, }
\newcommand{\A}{\boldsymbol{\mathrm{A}}  }
\newcommand{\B}{\boldsymbol{\mathrm{B}}  }
\newcommand{\CC}{\boldsymbol{\mathrm{C}}  }
\newcommand{\bL}{\boldsymbol{\mathrm{L}}  }
\newcommand{\loc}{\mathrm{loc}}
\newcommand{\Z}{{\mathbf Z}}
\newcommand{\N}{{\mathbf N}}
\newcommand{\nnabla}{\boldsymbol{\mathrm{\nabla}} \, }
\newcommand{\inprod}[2]{\left\langle{#1},{#2}\right\rangle}
\newcommand{\ve}{{\bf e}}
\newcommand{\vf}{{\bf f}}
\newcommand{\vg}{{\bf g}}
\newcommand{\vu}{{\bf u}}
\newcommand{\vv}{{\bf v}}
\newcommand{\vw}{{\bf w}}
\newcommand{\vy}{{\bf y}}
\newcommand{\vY}{{\bf Y}}
\newcommand{\ovz}{{\bf z}}
\newcommand{\x}{{\bf x}}
\newcommand{\xb}{{\bf x}}
\newcommand{\vphi}{{\bf \phi}}
\newcommand{\bphi}{\mbox{\boldmath $\phi$}}
\newcommand{\bpsi}{\mbox{\boldmath $\psi$}}
\newcommand{\btheta}{\mbox{\boldmath $\theta$}}
\newcommand{\xh}{\widehat{\x}}
\newcommand{\vomega}{\boldsymbol{\mathrm{\omega}} \, }
\newcommand{\bbP}{\mathbb{P}}
\newcommand{\calA}{\mathcal A}
\newcommand{\calD}{\mathcal D}
\newcommand{\calF}{\mathcal F}
\newcommand{\calG}{\mathcal G}
\newcommand{\calH}{\mathcal H}
\newcommand{\calV}{\mathcal V}
\newcommand{\calL}{\mathcal L}
\newcommand{\frakT}{\mathfrak T}
\newcommand{\tvf}{{\tilde{\vf}}}
\newcommand{\tvu}{{\tilde{\vu}}}
\newcommand{\tvv}{{\tilde{\vv}}}
\newcommand{\tvw}{{\tilde{\vw}}}
\newcommand{\hH}{\mathbb{H}}
\newcommand{\nN}{\mathbb{N}}
\newcommand{\E}{\mathbb{E}}
\newcommand{\R}{\mathbb{R}}
\newcommand{\lL}{\mathbb{L}}
\newcommand{\zZ}{\mathbb{Z}}
\newcommand{\what}{\widehat}
\newcommand{\nk}{ {n^{(k)}} }
\def\S{\mathbb S^2}
\def\la{\left(}
\def\ra{\right)}
\title[Stochastic Navier-Stokes on sphere]
{Random dynamical systems generated by stochastic Navier--Stokes equation 
on the rotating sphere}
\author{Z.~Brze\'{z}niak}
\address{Department of Mathematics \\
The University of York \\
Heslington, York, Y010 5DD \\
United Kingdom}
\email{zdzislaw.brzezniak@york.ac.uk}
\author{B.~Goldys}
\address{
School of Mathematics and Statistics \\
The University of Sydney, NSW 2006 \\
Australia}
\email{beniamin.goldys@sydney.edu.au}
\author{Q. T. Le Gia}
\address{%
School of Mathematics and Statistics \\
University of New South Wales \\
Sydney, NSW 2052, Australia}
\email{qlegia@unsw.edu.au}
\thanks{This work was partially supported by the ARC grant DP120101886 and DP120101816.\\
This work commenced during the first named author's visit to  the School of Mathematics and Statistics, 
University of New South Wales, Sydney and he  would like to thank 
 for the hospitality.}
\date{\today}
\begin{document}
\subjclass{35R60, 35Q30, 60H15, 76D06, 76M35}
\begin{abstract}
In this paper we first prove the existence and uniqueness of solution
to the stochastic Navier--Stokes equation on the rotating 2-dimensional sphere.
Then we show the existence of an asymptotically compact random dynamical system
associated with the equation.
\end{abstract}
\maketitle
\tableofcontents

\section{Introduction}

The aim of this work is to initiate systematic analysis of rotating stochastic
fluids on surfaces, notably on a sphere, and in thin shells. Importance of such problems for geophysical fluid dynamics and climate modelling are well known. This paper is concerned with two basic questions concerning the stochastic Navier--Stokes equation (SNS) on the 2-dimensional rotating sphere. The first one is about the existence and uniqueness of appropriately defined solutions and the second one is about the existence of stochastic flow and its asymptotic compactness.
\par\noindent
The deterministic Navier--Stokes
equations (NSEs) on the sphere has been object of intense study since early 1990's. Il'in and Filatov \cite{IliFil89,Ili91} considered the existence and uniqueness of solutions to these equations and estimated the Hausdorff dimension of their global attractors \cite{Ili94}. Temam and Wang \cite{TemWan93} considered the inertial forms of NSEs on sphere while Temam and Ziane \cite{TemZia97}, see also \cite{Avez+Bamberger_1978},  proved that the NSE on a 2-dimensional sphere is a limit
of NSEs defined on spherical shells \cite{TemZia97}. In other directions, Cao, Rammaha and Titi \cite{CaoRamTit99} proved the Gevrey regularity of the solution and found an upper bound on the asymptotic degrees of freedom for the long-time dynamics.

Concerning the numerical simulation of the deterministic NSEs on sphere, Fengler and Freeden \cite{FenFre05} obtained some impressive numerical results using the spectral method, while the numerical analysis of a pseudo-spectral method for these equations has been carried out in Ganesh, Le Gia and Sloan
in \cite{GanLeGSlo11}.

The question of the existence and uniqueness of solutions to the stochastic Navier--Stokes equation on 2D bounded
domains has been thoroughly investigated by many authors, beginning with the paper
by Bensoussan and Temam in 1973\cite{BenTem73}. In the case when continuous dependence on initial data remains open (for example, when the initial data is
merely in $L_2$), the existence of martingale solutions have been considered by
Capinski and Gatarek \cite{CapGat94}, Flandoli and Gatarek \cite{Fla94,FlaGat95}
and Mikulevicius and Rozovskii \cite{MikRoz05}. The uniqueness of
the martingale solution for the SNS on flat 2D bounded domains has been proved
by Ferrario in \cite{Fer03}. When working in spaces where continuous
dependence on the initial data can be expected, the existence of solutions
can sometimes be established on a preordained probability space. Such
solutions are often referred to as ``strong'' (in probabilistic sense)
or ``pathwise'' solutions. The existence of pathwise solutions in
$L_\infty([0,T],L_2)$ has been established by Da Prato and Zabczyk
\cite{PraZab96} and later by others \cite{Breckner00,MenSri02}. The existence of local
solutions evolving in Sobolev spaces, such as $W^{1,p}$, was addressed
by Mikulevicius and Rozovskii in \cite{MikRoz04} and
Brzezniak and Peszat in \cite{BrzPes00}.

The first result of our work is Theorem \ref{thm:existence} on the existence and uniqueness of pathwise variational solutions to the SNS equation on the rotating sphere for a general Gaussian noise. The only condition we require is, roughly speaking, that the corresponding linear stochastic equation (obtained by neglecting the nonlinear term in the SNS) has an $L^4$-solution. The fact that the equation is considered on the sphere and the presence of the Coriolis force lead to only minor modifications in the proof of existence. We can still use the classical Galerkin arguments based on expansions into series of vector spherical harmonics. In order to prove uniqueness we modify the classical argument of Lions and Prodi \cite{LioPro59}. Next, in Theorem \ref{thm:limit} we prove continuous dependence on the force and the driving noise.

The main result of this paper is presented in Theorem \ref{main}, where we prove that the random dynamical system associated to equation \eqref{equ:ode} is asymptotically
compact. This result is crucial for the proof the existence of a compact random attractor and the existence of an invariant measure. These questions are addressed in an accompanying paper \cite{arxiv}.  A similar result was obtained in \cite{robinson} for the deterministic NSE for a time-dependent force. A similar results for the stochastic NSE in unbounded domain was obtained in \cite{BrzLi06}

We emphasize, that the aforementioned results are obtained under the minimal assumptions on the regularity of the noise and of the external force. In particular, the external force $f$ is an element of of the dual $V^\prime$ of an appropriately defined energy space $V$.




\section{The Navier--Stokes equations on a rotating unit sphere}
\subsection{Preliminaries}

Let $\S$ be a $2$-dimensional unit sphere in $\R^3$,
i.e. $\S=\{ \xb=(x_1,x_2,x_3) \in \R^3: \vert \xb \vert=1\}$.
An arbitrary point $\xb$ on $\S$
can be parametrized by the spherical coordinates
\[
  \xb =\xh(\theta,\phi) =(\sin\theta\cos\phi,\sin\theta\sin\phi,\cos\theta),
  \quad 0\le \theta \le \pi,\; 0\le \phi \leq 2\pi.
\]
If $\xb =\xh(\theta,\phi)$ as above, then the corresponding angles $\theta$ and $\phi$ will be denoted by $\theta(\xb)$ and $\phi(\xb)$, or simply by $\theta$ and $\phi$. By $\xh$ we will also denote the unit normal vector field on $\S$, i.e. $\xh(\xb)=\xb$, for $\xb\in \S$.

Let $\btheta=\btheta(\theta,\phi)$ and $\bphi=\bphi(\theta,\phi)$
be the standard unit tangent vectors to $\S$ at point $\xh(\theta,\phi)\in \S$
 in the spherical
coordinates, that is
\[
\btheta = (\cos\theta\cos\phi,\cos\theta\sin\phi,-\sin\theta), \quad
\bphi = (-\sin\phi,\cos\phi,0).
\]

For a scalar function $f$ on $\S$, its surface gradient is given by
\[
 \Grad f =  \frac{\partial f}{\partial \theta} \btheta +
           \frac{1}{\sin \theta} \frac{\partial f}{\partial \phi} \bphi.
\]
Unless specified otherwise, by a vector field on $\S$ we mean a tangential vector filed, i.e. in the language of differential geometry, a section of the tangent vector bundle.

For a
vector field $\vu = (u_\theta,u_\phi)$ on $\S$, i.e.
$\vu = u_\theta \btheta + u_\phi \bphi$, its surface divergence
is given by
\[
 \Div \vu = \frac{1}{\sin\theta} \left( \frac{\partial}{\partial \theta} (u_\theta \sin \theta) +
                                        \frac{\partial}{ \partial \phi} u_\phi \right).
\]

The velocity field $\vu(\xh,t)=(u_\theta(\xh,t),u_\phi(\xh,t))$ of a geophysical
fluid flow on a rotating unit sphere $\S\subset\R^3$ under the external force
$\vf$ is governed by the Navier-Stokes equations (NSEs),
which is written as 
\cite{EbiMar70,Tay92}
\begin{equation}\label{NSE_sphere}
\partial_t \vu + \nabla_{\vu} \vu - \nu \bL \vu +
  \vomega \times \vu + \frac{1}{\rho} \Grad p = \vf, \quad  \Div \vu = 0,
  \quad \vu(\xh,0) = \vu_0.
\end{equation}
Here $\nu$ is the viscosity and $\rho$ is the density of the fluid, the normal
vector field $\vomega = 2\Omega (\cos\theta) \xb$ is the Coriolis acceleration,
$\Grad$ and $\Div$ are the
surface gradient and divergence, respectively.
The covariant derivative
$\nabla_{\vu} \vu$ is the nonlinear term in the equations. Here, the operator
$\bL$ is given by \cite{Tay92}
\be\label{equ:Lop}
\bL = \DDelta + 2 \Ric,
\ee
where $\DDelta$ is the Hodge Laplacian (or Laplace-de Rham) operator and $\Ric$ denotes the Ricci
tensor, which in case of the sphere coincides with the Riemannian tensor.
\par
It is easy to check that the Ricci tensor of the sphere is given by,
\be\label{equ:RicS}
 {\Ric}  = \left[\begin{array}{cc}
                            1  & 0 \\
                            0  & \sin^2\theta
                     \end{array} \right]
\ee

We consider now the nonlinear term $\nnabla_{\!\!\!\vu} \vu$. Given two vector fields $\vu$ and $\vv$ on $\S$,  by  \cite{DubNovFom86}, we can find  vector fields $\tilde{\vu}$ and $\tilde{\vv}$
defined in some neighbourhood of the surface $\S$ and such that their restrictions to $\S$ are equal to, respectively, $\vu$ and $\vv$, i.e.
 $\tilde{\vu}|_{\S} = \vu \in T\S$ and $\tilde{\vv}|_{\S} = \vv \in T\S$.
Then we put
\[
  [\nnabla_{\!\!\!\vv} \vu](\xb) = \pi_{\xb}
  \left( \sum_{i=1}^3 \tilde{\vv}_i(\xb) \partial_i \tvu(\xb) \right)
  = \pi_{\xb} \big( (\tvv(\xb) \cdot \tilde{\nabla})\tvu(\xb) \big),\;\; \xb \in \S,
\]
where $\tilde{\nabla}$ is the usual gradient in $\R^3$ and,  for $\xb \in \S$,
$\pi_{\xb}$ is the orthogonal projection from $\R^3$
onto the tangent space $T_{\xb} \S$ to $\S$ at $\xb$.

Note that, by decomposing $\tvu$ and $\tvv$ into tangential and normal
components of $T_{\xb}\S$, using the orthogonal properties of $\pi_{\xb}$,
one can show that
\begin{equation}\label{equ:piuv}
\pi_{\xb} (\tvu \times \tvv) =
\vu \times ( (\xb \cdot \vv) \xb) +
(\xb \cdot \vu) \xb \times \vv  \mbox{ for any }  \;\; \xb\in \S.
\end{equation}
In the above formula, as in many to follow, unless there is a danger of ambiguity, we omit the symbol $\xb$. For example, the LHS of \eqref{equ:piuv} should be read as $\pi_{\xb} (\tvu(\xb) \times \tvv(\xb))$.

We set $\vv = \vu$ and use the formula
\[
 (\tvu \cdot \tilde{\nabla})\tvu =   
   \tilde{\nabla} \frac{|\tvu|^2}{2} - \tvu \times (\tilde{\nabla} \times \tvu)
\]
to obtain
\begin{equation}\label{short_nonlin0}
        \nnabla_{\!\!\!\vu} \vu = \Grad \frac{|\vu|^2}{2} -
        \pi_{\xh} \big(\tvu \times (\nabla \times \tvu)\big)
\end{equation}

Using \eqref{equ:piuv} for the vector fields $\tilde{\vu}$ and
$\tvv = \tilde{\nabla} \times \tvu$, we
have
\begin{equation}\label{equ:conc1}
\pi_{\xb}(\tvu \times (\tilde{\nabla} \times \tvu))
= \vu \times ( (\xb \cdot (\tilde{\nabla} \times \tvu)) \xb)
 + (\tvu \cdot \xb) \xb \times (\tilde{\nabla} \times \tvu),\;\; \xb\in \S.
\end{equation}

With a \dela{tangential} vector field $\vu$ on $\S$, since the normal component
is zero, \eqref{equ:conc1} is reduced to
\begin{equation}\label{equ:conc3}
\pi_{\xb}(\tvu \times (\tilde{\nabla} \times \tvu))
= \vu \times ( (\xb \cdot (\tilde{\nabla} \times \tvu)) \xb),\;\; \xb\in \S.
\end{equation}

So, for a \dela{tangential} vector field $\vu$ on $\S$, one can define
\begin{equation}\label{equ:defcurln}
\Curln \vu := \xh \cdot (\tilde{\nabla} \times \tvu)_{\vert\S}.
\end{equation}
Given a {tangential} vector field $\vv$ on $\S$, with a slight abuse
of notation, we write
\[
 \vv \times \Curln \vu :=  \vv \times \xb (\Curln \vu).
\]
Hence from \eqref{equ:conc3}, \eqref{equ:defcurln}
we obtain
\[
  \pi_{\xb} [\tvu \times (\tilde{\nabla} \times \tvu)] (\xb)=  [\vu(\xb) \times \xb] {\Curln}\vu(\xb) \,   \;\; \xb \in \mathbb{S}^2, \label{eqn:star}
\]
and thus
\begin{equation}\label{short_nonlin1}
        \nnabla_{\!\!\!\vu} \vu = \Grad \frac{|\vu|^2}{2} - \vu \times {\Curln}\vu.
\end{equation}

We have the following well-defined operators \cite{Ili91}:
\begin{definition}
Let $\vu$ be a tangent vector field on $\S$, and let the vector field $\bpsi$ be normal to $\S$.
We set
\begin{equation}\lb{Curl}
\Curln \vu = (\xh \cdot (\tilde{\nabla} \times \tvu))|_{\S}, \qquad
\Curl {\bf \psi} = (\tilde{\nabla} \times \tilde{\bpsi})|_{T\S},
\end{equation}
where (as before) the vector field $\tvu$ \dela{(which is called an extension of $\vu$)} is defined in some
neighbourhood of $\S$ in $\R^3$  and satisfies $\tvu|_{\S} = \vu$.
\end{definition}
The first equation in \eqref{Curl} indicates a projection of
$\nabla \times \tvu$ onto the normal direction
while the second equation means a restriction of $\nabla \times \bpsi$
to the tangent field on $\S$. The definitions in~\eqref{Curl} are independent
of the extensions $\tvu$ and $\tilde{\bpsi}$.
A vector field $\bpsi$ normal to $\S$ will often be
identified with a scalar function on $\S$ when it is convenient to do so.
The relationships among $\Curl$ of a scalar function $\psi$, $\Curl$ of
a normal vector field $\vw =  w \xh$, of a scalar function $v$,
and $\Curln$ of a \dela{tangential} vector field $\vv$ on $\S$ and the surface
$\Div$ and $\Grad$ operators are given as (see \cite{CaoRamTit99})
\begin{equation}\label{Curl2}
\Curl \psi = -\xh \times \Grad \psi,  \quad
\Curl \vw = -\xh \times \Grad w, \quad
\Curln \vv = -\Div(\xh \times \vv).
\end{equation}

The surface diffusion operator acting on  \dela{tangential}
vector fields  on $\S$ is  denoted by $\DDelta$
(known as the vector Laplace-Beltrami or Laplace-de Rham operator)
and  is defined as
\begin{equation}\label{deRham}
 \DDelta \vv = \Grad \Div \vv - \Curl \Curln \vv.
\end{equation}

Using \eqref{Curl2}, one can derive the following relations connecting the above operators:
\begin{equation} \label{curln_curl_psi}
  \Div \Curl v = 0, \qquad
  \Curln \Curl v = -\xh \Delta v, \qquad
\DDelta \Curl v = \Curl \Delta v.
\end{equation}

We introduce the standard inner products on
the space $L^2(\S)$ of square integrable scalar functions
on $\S$, and on the space  $\lL^2(\S)$ of \dela{tangential} vector
fields on $\S$, denoted by :
\bea
(v_1,\;v_2) &=& (v_1,\;v_2)_{L^2(\S)} ~~= \int_{S} v_1 v_2~dS,
\quad \qquad v_2, v_2 \in L^2(\S), \label{scal_ip} \\
(\vv_1,\;\vv_2)   &=& (\vv_1,\;\vv_2)_{\lL^2(S)} = \int_{\S} \vv_1 \cdot
\vv_2~dS, \qquad  \vu,\vv \in \lL^2(\S), \label{vec_ip}
\eea
where $\int_{\S} v dS$ denotes integration with respect to
the surface (or riemannian volume) measure on $\S$. In the spherical coordinates we have,
locally, $dS = \sin \theta d\theta d\phi$.
Throughout the paper, the  induced norm on  $\lL^2(\S)$ is
denoted by $\| \cdot \|$ and for other inner product spaces,
say $X$ with inner product $(\cdot,\;\cdot )_{X}$, the associated norm is
denoted by  $\| \cdot \|_X$.

We have the following identities for appropriate real valued functions and
vector fields on $\S$, see for instance \cite[(2.4)-(2.6)]{Ili91}.
\begin{eqnarray}
(\Grad \psi,\; \vv) &=& - (\psi,\; \Div \vv), \label{ip_identities0} \\
(\Curl \psi,\;\vv)  &=& (\psi,\; \Curln \vv), \label{ip_identities1} \\
(\Curl \Curln \vw,\; \ovz) &=& (\Curln \vw,\; \Curln \ovz).
  \label{ip_identities2}
\end{eqnarray}
In \eqref{ip_identities1}, the  $\lL^2(\S)$ inner
product is used on the left hand side and the $L^2(\S)$ inner product
is used on the right hand side. Throughout the paper, we identify a
normal vector field $\vw$ with a
scalar field $w$ by $\vw = \xh w$ and hence we put
\begin{equation}\label{normal_ip}
    (\psi,\; \vw) := (\psi,\; w)_{L^2(\S)}, \qquad
    \vw = \xh w, \qquad \psi, w \in L^2(\S).
\end{equation}

Using the identity \eqref{ip_identities0} the unknown pressure can be
eliminated from the
first equation in (\ref{NSE_sphere}) through the weak formulation.

\subsection{The weak formulation}
We now introduce Sobolev spaces $H^s(\S)$ and $\hH^s(\S)$ of scalar functions and
vector fields on $\S$ respectively.

Let $\psi$ be a scalar function and let $\vu$ be a vector field on $\S$, respectively.
For $s\ge 0$ we define
\begin{equation}\label{def:Hs}
  \|\psi\|^2_{H^s(\S)} = \|\psi\|^2_{L^2(\S)} + \| (-\Delta)^{s/2} \psi\|^2_{L^2(\S)},
\end{equation}
and
\begin{equation}\label{def:hHs}
  \|\vu\|^2_{\hH^s(\S)} = \|\vu\|^2 + \|(-\DDelta)^{s/2} \vu\|^2,
\end{equation}
where $\Delta$ is the Laplace--Betrami and $\DDelta$ is the Laplace--de Rham
operator on the sphere. In particular, for $s=1$,
\begin{align}
\|\vu\|^2_{\hH^1(\S)}
     &= \|\vu\|^2 + (\vu,-\DDelta \vu)  \nonumber\\
     &= \|\vu\|^2 + \|\Div \vu\|^2 + \| \Curln \vu\|^2, \label{equ:H1norm}
\end{align}
where we have used formulas \eqref{deRham},\eqref{ip_identities0}--\eqref{ip_identities2}.

By the Hodge decomposition theorem~\cite[Theorem 1.72]{Aub98} the space of $C^\infty$ smooth  \dela{tangential} fields on $\S$
can be decomposed into three components
:
\begin{equation} \label{Hodge}
  C^\infty(T\S) =   \calG \oplus  \calV \oplus \calH,
\end{equation}
where
\begin{equation} \label{orth_space}
   \calG = \{\Grad \psi: \psi \in C^\infty(\S)\},\quad
   \calV = \{\Curl \psi: \psi \in C^\infty(\S)\},
\end{equation}
and $\calH$ is the finite-dimensional space of harmonic fields. Since
the sphere is simply connected, $\calH = \{0\}$.
We introduce the following spaces
\beas
    H &=& \mbox{ closure of } \calV
          \mbox{ in } \lL^2(\S), \\
    V &=& \mbox{ closure of } \calV
          \mbox{ in } \hH^1(\S).
\eeas
Note that
\beas
    H &=& \{\vu \in  \lL^2(\S): \Div \vu =0\}, \\
    V &=& H\cap  \hH^1(\S).
\eeas




Since $V$ is densely and continuously embedded into $H$ and $H$ can be identified
with its dual $H^\prime$, and denoting by  $V^\prime$  the dual of $V$, we have the following Gelfand triple:
\begin{equation}\label{Gelfand triple}
 V \subset H \cong H^\prime \subset V^\prime.
\end{equation}

We consider the following linear Stokes problem, i.e. given $\vf\in V^\prime$, find $\vv \in V$ such that
\begin{equation}\label{Stokes}
  \nu \Curl \Curln \vu - 2\nu{\Ric}  (\vu) + \Grad p = \vf, \quad \Div \vu = 0.
\end{equation}
Taking the inner product of the first equation of \eqref{Stokes} with a test vector field $\vv\in V$
and then using \eqref{ip_identities2}, we obtain
\begin{equation}\label{weakStokes}
 \nu (\Curln\vu,\Curln\vv) - 2\nu(\mbox{Ric\;}\vu,\vv) = (\vf,\vv), \;\;\; \vv \in V.
\end{equation}
We define a bilinear form $a: V\times V \rightarrow \R$ by
\[
  a(\vu,\vv) :=
  (\Curln\vu,\Curln\vv) - 2(\mbox{Ric\;}\vu,\vv),\quad \vu,\vv \in V.
\]
In view of \eqref{equ:H1norm} and \eqref{equ:RicS}, the bilinear form
$a$ satisfies
\[
  a(\vu,\vv) \le  \|\vu\|_{\hH^1} \|\vv\|_{\hH^1},
\]
and hence it is continuous on $V$. So by the Riesz Lemma, there exists a unique
operator $\calA:~V \rightarrow V^\prime$,
 such that $a(\vu,\vv) = (\calA \vu,\vv)$, for
$\vu, \vv \in V$. Using the Poincar\'{e} inequality,
we also have $a(\vu,\vu) \ge \alpha \|\vu\|^2_V$, for some
constant $\alpha>0$, which means $a$
is coercive in $V$. Hence by the Lax-Milgram
theorem the operator $\calA : V \rightarrow V^\prime$ is an isomorphism. Furthermore,
by using \cite[Theorem 2.2.3]{Tan79}, we conclude that the operator $\calA$ is
positive definite, self-adjoint and $\calD(\calA^{1/2}) = V$. Thus, the spectrum
of $\calA$ consists of an infinite sequence
$0 < \lambda_1 < \lambda_2 < \ldots < \lambda_\ell \rightarrow~\infty$ of eigenvalues (of finite multiplicity), and
there exists an orthogonal basis $\{\vw_\ell\}_{\ell\ge 1}$ of $H$ consisting
of eigenvectors of $\calA$.

\deld{\textcolor{red}{The citation \cite{Mul66} below is missing.}}
By using the stream function $\psi_\ell$ for which $\vw_\ell = \Curl \psi_\ell$
and identities \eqref{curln_curl_psi} we can show that $\lambda_\ell$ is an
eigenvalue of the Laplace-Beltrami operator $\Delta$. It is well known
that, see \cite{Mul66}, these eigenvalues are $\lambda_\ell=\ell(\ell+1)$ for
$\ell=0,1,\ldots$ and that the eigenfunctions are the spherical harmonics
$Y_{\ell,m}(\theta,\phi)$, for $\theta \in [0,\pi]$,
$\phi \in [0,2\pi)$, which are defined by (in spherical coordinates)
\begin{equation}\label{sph_har}
   Y_{\ell,m}(\theta,\varphi) =
      \left[\frac{(2\ell+1)}{4\pi}
      \frac{(\ell-|m|)!}{(\ell+|m|)!} \right]^{1/2} P^{m}_{\ell}(\cos \theta) e^{im\varphi},
      \quad m=-\ell, \ldots, \ell,
\end{equation}
with $P^m_\ell$ being the associated Legendre polynomials.

Hence for each positive integer $\ell=1,2,\ldots$,
the eigenvectors of the operator $\calA$
corresponding to the eigenvalue $\lambda_\ell$
are given by
\begin{equation}\label{stok_eig}
 \qquad     \Z_{\ell,m}(\theta,\varphi),
 \quad m=-\ell, \ldots, \ell.
\end{equation}

Since $\{\Z_{\ell,m}: \ell=1,\ldots; m =-\ell,\ldots, \ell\}$ is an orthonormal
basis for $H$, an arbitrary $\vv \in H$ can be written as
\begin{equation}\label{Fourier_coeff}
\vv=\sum_{\ell=1}^\infty \sum_{m=-\ell}^\ell \widehat{v}_{\ell,m} \Z_{\ell,m},
\qquad
\widehat{v}_{\ell,m} = \int_{\S} \vv \cdot \overline{\Z_{\ell,m}} dS =
(\vv,\;\Z_{\ell,m}).
\end{equation}

Next we define an operator $\A$ in $H$ as follows:
\begin{equation}\label{defA}
\left\{ \begin{array}{lcl}
            \calD(\A) &:=& \{\vu \in V: \calA\vu \in H \}, \\
            \A \vu    &:=& \calA \vu, \quad \vu \in \calD(\A).
        \end{array} \right.
\end{equation}

Let ${\mathrm P}$ be the Leray-Helmholtz orthogonal projection from $\lL^2(\S)$ to $H$.
It can be shown \cite{Gri00} that $\calD(\A) = \hH^2(\S) \cap V$,
$\A = -{\mathrm P}(\DDelta + 2{\Ric} )$, and $\A$ is self-adjoint in $H$.
It can also be show that
$V=\calD(\A^{1/2})$ and
\[
 \|\vu\|^2_V \sim (\A \vu,\vu), \quad \vu \in \calD(\A),
\]
where $A \sim B$ indicates that there are two positive constants $c_1$ and $c_2$
such that $c_1 A \le B \le c_2 A$.

Let us now recall the definition of the fractional power $\A^{s/2}$ of the Stokes operator $\A$ in $H$.  For any $s\ge 0$ its domain is given by
\begin{equation}\label{Gev0_space}
 \calD(\A^{s/2}) =
 \left\{\vv \in  H \; : \vv = \sum_{\ell=1}^\infty
       \sum_{m=-\ell}^{\ell} \widehat{v}_{\ell,m} \Z_{\ell,m},
     \quad\sum_{\ell=1}^\infty
     \sum_{m=-\ell}^\ell \lambda^s_\ell |\widehat{v}_{\ell,m}|^2<\infty
     \right\},
\end{equation}
which is equal to the set of   divergence-free vector fields from  the Sobolev space $\hH^s(\S)$.
For $\vv\in \calD(\A^{s/2})$, we define
\begin{equation}\label{As-def}
\A^{s/2} \vv :=
\quad\sum_{\ell=1}^\infty
  \sum_{m=-\ell}^\ell \lambda^{s/2}_\ell \widehat{\vv}_{\ell,m} \Z_{\ell,m} \quad \in H.
\end{equation}

The Coriolis operator $\CC_1:\lL^2(\S)\rightarrow \lL^2(\S)$, is defined by the formula
\begin{equation}\label{C}
(\CC_1\vv)(\xb) = 2 \Omega \cos \theta (\xb \times \vv(\xb)),\quad \xb \in \S,
\end{equation}
From the definition, it can be seen that $\CC_1$ is a bounded linear operator defined on
$\lL^2(\S)$. In the sequel we will need the operator $\CC ={\mathrm P} \CC_1$ which is
well defined and bounded in $H$.
Furthermore, for $\vu \in H$
\be\lb{Cuu}
  (\CC\vu, \vu) = (\CC_1\vu, {\mathrm P}\vu)=
  \int_{\S} 2 \Omega \cos \theta(\xb) ((\xb \times \vu) \cdot \vu(\xb))dS(\xb) = 0.
\ee

We consider the trilinear form $b$ on  $V \times  V \times V$,
defined as
\begin{equation}\label{trilinear}
b(\vv,\vw,\ovz) = (\nnabla_{\!\!\!\vv} \vw,  \ovz) =  \int_{\S} \nnabla_{\!\!\!\vv} \vw \cdot
\ovz~dS, \qquad \vv, \vw, \ovz \in  V.
\end{equation}
Using the following identity
\begin{eqnarray}\label{covar_long}
       2 \nnabla_{\!\!\!\vw} \vv &=& -\Curln(\vw\times \vv)+\Grad(\vw\cdot \vv)-\vv\Div \vw \\
                          &+&\vw \Div \vv - \vv\times\Curln \vw - \vw \times \Curln \vv.
                          \nonumber
\end{eqnarray}
and (\ref{ip_identities1}), for divergence free
fields $\vv, \vw, \ovz$,  the trilinear
form can be written as
\be
b(\vv,\vw,\ovz) =  \frac{1}{2} \int_{\S} \left[-\vv \times \vw \cdot \Curln \ovz  +
         \Curln \vv \times \vw \cdot \ovz - \vv \times \Curln \vw \cdot \ovz\right]~dS.
     \label{short_b}
\ee
Moreover~\cite[Lemma 2.1]{Ili91}
\be \lb{skew}
   b(\vv,\vw,\vw)=0, \qquad b(\vv,\ovz,\vw) = -b(\vv,\vw,\ovz)
          \qquad \vv \in V, \vw, \ovz \in \hH^1(\S).
\ee

We have the following inequality from \cite{IliFil89}
\be\lb{equ:L4}
 \|\vu\|_{\lL^4(\S)} \le C \|\vu\|^{1/2}_{\lL^2(\S)} \|\vu\|_V^{1/2},
 \quad \vu \in \hH^1(\S).
\ee

Thus, using (\ref{deRham}),  (\ref{ip_identities1}), (\ref{defA}), and (\ref{short_b}),
a \emph{weak solution} of the Navier-Stokes equations (\ref{NSE_sphere}) is a vector field
$\vu \in L^2([0,T];V)$ with $\vu(0)= \vu_0$ that satisfies the weak form of equation~\eqref{NSE_sphere}, i.e.
\begin{equation} \label{weak_form}
  (\partial_t \vu,\vv) + b(\vu,\vu,\vv) + \nu(\Curln \vu, \Curln \vv) - 2\nu (\mbox{Ric\;}\vu,\vv)
  + (\CC\vu, \vv)  = (\vf,\vv), \qquad  \vv \in  V.
\end{equation}
This weak formulation can be written in operator equation form on $V^\prime$, the dual of $V$.
Let $\vf \in L^2([0,T];V^\prime)$ and $\vu_0 \in H$.
We want to find a vector field  $\vu \in L^2([0,T]; V)$, with
$\partial_t \vu \in L^2([0,T];V^\prime)$
such that
\begin{equation}\label{op_form}
   \partial_t \vu + \nu \A\vu + \B(\vu,\vu) + \CC\vu = \vf, \qquad \vu(0) = \vu_0,
\end{equation}
where the bilinear form $\B:V \times V \rightarrow V^\prime$ is defined by
\begin{equation}\label{B}
   (\B(\vu,\vv),\vw) = b(\vu,\vv,\vw) \qquad  \vw \in V.
\end{equation}
With a slight abuse of notation, we also denote $\B(\vu) = \B(\vu,\vu)$.


The following are some fundamental properties of the trilinear form $b$;
see \cite{FenFre05}: There exists a constant $C>0$ such that
\be\label{b_estimate}
|b(\vu,\vv,\vw)| \le C
 \begin{cases}
   \|\vu\|^{1/2} \|\vu\|^{1/2}_V \|\vv\|^{1/2}_V \|\A\vv\|^{1/2} \|\vw\|,
     \quad \vu \in V, \vv\in D(\A), \vw \in H,\\
     \\
   \|\vu\|^{1/2} \|\A \vu\|^{1/2} \|\vv\|_V \|\vw\|,
   \quad \vu \in D(\A), \vv \in V, \vw\in H, \\
   \\
   \|\vu\|^{1/2} \|\vu\|^{1/2}_V \|\vv\|_V \|\vw\|^{1/2} \|\vw\|_V^{1/2},
   \quad \vu,\vv,\vw \in V.
 \end{cases}
\ee

We also need the following estimates:
\begin{lemma}
There exists $C>0$ such that
\be\lb{b_estimate1}
|b(\vu,\vv,\vw)| \le C \|\vu\| \|\vw\|
 (\|\Curln \vv\|_{\lL^\infty(\S)} + \|\vv\|_{\lL^\infty(\S)}),
\quad \vu \in H, \vv \in V, \vv \in H,
\ee
and
\be\lb{b_estimate2}
|b(\vu,\vv,\vw)| \le C \|\vu\| \|\vv\|_V \|\vw\|^{1/2} \|\A\vw\|^{1/2},
\quad \vu \in H, \vv \in V, \vw \in D(\A),
\ee
and
\be\lb{b_estimate3}
|b(\vu,\vv,\vw)| \le C \|\vu\|_{\lL^4(\S)} \| \vv \|_V \|\vw\|_{\lL^4(\S)},
\quad \vv \in V, \vu,\vw \in \hH^1(\S).
\ee
\end{lemma}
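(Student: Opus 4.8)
The three inequalities are all of the same flavour: take the representation \eqref{short_b} of the trilinear form $b$ and estimate the three resulting integrals by H\"older's inequality, then convert $\lL^p$-norms of $\vu,\vv,\vw$ and of $\Curln\vu,\Curln\vv$ into the Hilbert-space norms $\|\cdot\|$, $\|\cdot\|_V$, $\|\A\cdot\|$ by means of the embedding \eqref{equ:L4}, the identity \eqref{equ:H1norm} (so that $\|\Curln\vv\|\le\|\vv\|_V$), and Agmon/Sobolev-type inequalities on $\S$. The antisymmetry \eqref{skew} will be used freely to move a derivative off the worst factor whenever that is convenient (for instance $b(\vu,\vv,\vw)=-b(\vu,\vw,\vv)$ for divergence-free fields). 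I would treat the three bounds in turn.

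For \eqref{b_estimate3}: each term of \eqref{short_b} is a product of three factors, one of which carries a $\Curln$. Apply H\"older with exponents $(4,2,4)$, putting the factor with $\Curln$ (hence a full derivative, controlled by $\|\cdot\|_V$ via \eqref{equ:H1norm}) in $L^2$ and the other two in $\lL^4$. In the first term $-\vv\times\vw\cdot\Curln\ovz$ the $\Curln$ sits on $\ovz=\vw$ (the third slot), giving $\|\vv\|_{\lL^4}\|\vu... }$ — careful: here the roles are $b(\vu,\vv,\vw)$, so the first term is $-\vu\times\vv\cdot\Curln\vw\le\|\vu\|_{\lL^4}\|\vv\|_{\lL^4}\|\Curln\vw\|$; but we want $\|\vv\|_V$ not $\|\vw\|_V$ on the right. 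Use \eqref{skew} to rewrite $b(\vu,\vv,\vw)=-b(\vu,\vw,\vv)$ and apply \eqref{short_b} to the latter; now every appearance of $\Curln$ falls on $\vv$ or is paired so that H\"older $(4,2,4)$ with the $\Curln\vv$-factor in $L^2$ yields $\|\vu\|_{\lL^4}\|\vv\|_V\|\vw\|_{\lL^4}$, which is the claim. For \eqref{b_estimate1}: again use \eqref{short_b} for $b(\vu,\vv,\vw)$ (no integration by parts needed). Bound the two factors $\vv$ and $\Curln\vv$ in $\lL^\infty$ and the remaining two factors $\vu,\vw$ in $\lL^2$ by Cauchy--Schwarz; the three terms then collectively give $C\|\vu\|\,\|\vw\|\,(\|\Curln\vv\|_{\lL^\infty}+\|\vv\|_{\lL^\infty})$. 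For \eqref{b_estimate2}: start from \eqref{b_estimate1}, then invoke the Agmon-type inequality on the $2$-sphere, $\|\vw\|_{\lL^\infty(\S)}+\|\Curln\vw\|_{\lL^\infty(\S)}\le C\|\vw\|_{\hH^1}^{1/2}\|\vw\|_{\hH^2}^{1/2}\le C\|\vw\|_V^{1/2}\|\A\vw\|^{1/2}$, after using \eqref{skew} to transfer the derivative: $b(\vu,\vv,\vw)=-b(\vu,\vw,\vv)$, apply \eqref{b_estimate1} with $\vv$ and $\vw$ swapped to get $C\|\vu\|\,\|\vv\|\,(\|\Curln\vw\|_{\lL^\infty}+\|\vw\|_{\lL^\infty})$, and finish with Agmon plus $\|\vv\|\le C\|\vv\|_V$ (which is weaker than what we want, so instead keep $\|\vv\|_V$ by noting $\|\vv\|\le\|\vv\|_V$ — in fact \eqref{b_estimate2} has $\|\vv\|_V$, so this is fine).

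The main obstacle is bookkeeping: getting the \emph{right} factor to carry the derivative so that the final estimate has $\|\vv\|_V$ (and not $\|\vu\|_V$ or $\|\vw\|_V$) on the right-hand side, which forces judicious use of the antisymmetry relations in \eqref{skew} before applying \eqref{short_b} and H\"older. A secondary technical point is the Agmon inequality on $\S$ used for \eqref{b_estimate2}: one needs $\hH^2(\S)\hookrightarrow\lL^\infty(\S)$ together with the interpolation $\|\cdot\|_{\lL^\infty}\lesssim\|\cdot\|_{\hH^1}^{1/2}\|\cdot\|_{\hH^2}^{1/2}$ for tangent vector fields, which follows from the scalar case applied to the stream function and to $\Curln$ via \eqref{curln_curl_psi}, combined with the norm equivalences $\|\vu\|_V\sim(\A\vu,\vu)^{1/2}$ and $\calD(\A)=\hH^2(\S)\cap V$ recorded above. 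Once these ingredients are in place, each of the three estimates is a one- or two-line H\"older computation.
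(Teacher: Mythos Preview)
Your plan has a genuine gap that affects all three estimates. The formula \eqref{short_b} is obtained from the covariant-derivative definition \eqref{trilinear} by an integration by parts, and the price for that is that the single derivative originally sitting on the middle argument gets \emph{distributed}: each of the three summands in \eqref{short_b} carries a $\Curln$ on a \emph{different} argument. Concretely, for $b(\vu,\vv,\vw)$ one summand contains $\Curln\vw$, another contains $\Curln\vu$, and only the third contains $\Curln\vv$. Your sentence ``now every appearance of $\Curln$ falls on $\vv$'' after applying \eqref{skew} is therefore not correct: antisymmetry merely swaps the last two slots, and \eqref{short_b} applied to $b(\vu,\vw,\vv)$ again has curls on all three of $\vu,\vw,\vv$. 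So for \eqref{b_estimate1} your H\"older step produces factors $\|\Curln\vu\|$ and $\|\Curln\vw\|$ that you cannot control by $\|\vu\|$ and $\|\vw\|$; for \eqref{b_estimate3} it produces factors like $\|\Curln\vu\|$ or $\|\Curln\vw\|$ that you cannot control by $\|\vu\|_{\lL^4}$ or $\|\vw\|_{\lL^4}$; and for \eqref{b_estimate2} your route via \eqref{b_estimate1} followed by Agmon would require $\|\Curln\vw\|_{\lL^\infty}$, which in two dimensions needs more than $\hH^2$-regularity of $\vw$, i.e.\ more than $\vw\in \calD(\A)$.

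The paper avoids this by a different device: following Il'in, it extends $\vu,\vv,\vw$ radially to a spherical shell $\Omega=\S\times I\subset\R^3$ via \eqref{ext_map}, where the trilinear form reads in its classical Euclidean shape $\tilde b(\tilde\vu,\tilde\vv,\tilde\vw)=\int_\Omega \tilde u_j\,\partial_j\tilde v_i\,\tilde w_i\,dx$ with the derivative \emph{only} on the middle argument. One then applies H\"older (or Cauchy--Schwarz) in $\Omega$ with the desired exponent split, obtaining $\|\tilde\vu\|_{\lL^p}\,\|\nabla\tilde\vv\|_{\lL^2}\,\|\tilde\vw\|_{\lL^q}$ (with $p=q=2$ and $\nabla\tilde\vv\in\lL^\infty$ for \eqref{b_estimate1}; $p=2$, $q=\infty$ for \eqref{b_estimate2}; $p=q=4$ for \eqref{b_estimate3}), and converts back to $\S$-norms using Il'in's comparison inequalities \eqref{Lpnorm}, \eqref{Vnorm_ext}, \eqref{Sup_ext}, together with \eqref{btilde}. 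In particular, \eqref{Sup_ext} is exactly the statement that $\|\nabla\tilde\vv\|_{\lL^\infty(\Omega)}$ is bounded by $\|\Curln\vv\|_{\lL^\infty(\S)}+\|\vv\|_{\lL^\infty(\S)}$, which is what makes \eqref{b_estimate1} come out with the right factors. For \eqref{b_estimate2} the paper then only needs the Agmon-type bound $\|\vw\|_{\lL^\infty(\S)}\le C\|\vw\|^{1/2}\|\A\vw\|^{1/2}$ on $\vw$ itself, not on $\Curln\vw$. If you want to work intrinsically on $\S$, the natural substitute is to start from the \emph{definition} \eqref{trilinear} (so that the covariant derivative acts on $\vv$ alone) and use the pointwise bound $|\nabla_{\vu}\vv|\le|\vu|\,|\nabla\vv|$; but then you still need a replacement for \eqref{Sup_ext} and \eqref{Vnorm_ext}, i.e.\ a comparison between $|\nabla\vv|$ and $(\Curln\vv,\vv)$ for divergence-free fields.
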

\begin{proof}
Following~\cite[Page~574]{Ili91}, for  $\vu \in C^\infty(T\S)$,
we extend $\vu$ to the spherical layer
$\Omega = \S \times I$, $I = (r_1,r_2), 0<r_1 < 1 < r_2 < \infty$
by the formula
\be\label{ext_map}
 \tilde{\vu}(\x) = \varphi(|\x|)\vu(\x / |\x|), \quad \x \in \Omega,
\ee
where $\varphi \in C_0^\infty(I)$,   $\varphi(t) \ge 0,~t \in I$,
and  $\varphi(1) = 1$. Since
\[
   \int_{\S\times I} |\tvu|^p =
     \int_{r_1}^{r_2} \varphi^p(r) dr \int_{\S} |\vu|^p dS,
\]
there exists $c_p=c_p(\varphi,r_1,r_2)$ such that
\be\label{Lpnorm}
     \|\tvu\|_{\lL^p(\S\times I)} = c_p \|\vu\|_{\lL^p(\S)}.
\ee


Using identities (4.24)--(4.26) and inequality (3.7) from \cite{Ili91} we can also deduce
that there exists $c>0$ such that for every tangent vector field $\vu\in V$,
\be\label{Vnorm_ext}
 \|\nabla \tilde{\vu} \|_{\lL^2(\Omega)} \le c \|\vu\|_V,
\ee
and
\be\label{Sup_ext}
 \|\nabla \tilde{\vu}\|_{\lL^\infty(\Omega)} \le c (\|\Curln \vu\|_{\lL^\infty(\S)} + \|\vu\|_{\lL^\infty(\S)}).
\ee

Suppose $\vu,\vv,\vw$ are extended from $\S$ to the spherical
layer $\S \times I$ by \eqref{ext_map}. Then from
\cite[Lemma 4.3]{Ili91} we have
\be\label{btilde}
    b(\vu,\vv,\vw) = c \tilde{b}(\tilde{\vu},\tilde{\vv},\tilde{\vw}),
\ee
where $\tilde{b}$ is the extenstion of $b$ to $\Omega$, i.e.,
\[
  \tilde{b}(\tilde{\vu},\tilde{\vv},\tilde{\vw})
   = \sum_{i,j = 1}^{3}\int_{\Omega}
       \tilde{u}_j \frac{\partial \tilde{v}_i}{\partial x_j} \tilde{w}_i dx.
\]

Since by  the Cauchy-Schwarz inequality
\[
|\tilde{b}(\tvu,\tvv,\tvw)| =
\left |\sum_{i,j = 1}^{3}\int_{\Omega}
    \tilde{u}_j \frac{\partial \tilde{v}_i}{\partial x_j} \tilde{w}_i dx
             \right|
\le \|\tvu\| \|\nabla \tvv \|_{\lL^\infty(\S)} \|\tvw\|,
\]
by restricting to the sphere by \eqref{btilde} and using \eqref{Sup_ext} we obtain
\eqref{b_estimate1}.

On the sphere $\S$, we have the following version of Sobolev embedding
inequality \cite[Proposition 2.11]{Aub98}
\be\label{Sobolev_imbedding}
  \| \vu \|_{\lL^q(\S)} \le C \|\vu\|_{\hH^{s}(\S)},
         \qquad s<1, \quad \frac{1}{q}=\frac{1}{2} - \frac{s}{2}.
\ee
By the Cauchy-Schwarz inequality, we have
\be\label{b_extended}
|\tilde{b}(\tvu,\tvv,\tvw)| =
\left |\sum_{i,j = 1}^{3}\int_{\Omega}
    \tilde{u}_j \frac{\partial \tilde{v}_i}{\partial x_j} \tilde{w}_i dx
         \right|
             \le \|\tvu\|_{\lL^{2}(\Omega)} \| \nabla \tvv \|_{\lL^{2}(\Omega)}
                       \|\tvw\|_{\lL^\infty(\Omega)}
\ee
Restricting to the sphere by \eqref{btilde} and using \eqref{Vnorm_ext} we obtain
\[
 |b(\vu,\vv,\vw)| \le C \|\vu\| \|\vv\|_V \|\vw\|_{\lL^\infty(\S)}.
\]
By the Sobolev imbedding theorem \eqref{Sobolev_imbedding} and
the H\"{o}lder inequality for Riemannian manifolds (see \cite[Proposition 3.62]{Aub98})
we have
\[
 \|\vw\|_{\lL^\infty(\S)} \le \|\vw\|^{1/2} \|\A\vw\|^{1/2}.
\]
Combining those inequalities, we arrive at inequality \eqref{b_estimate2}.

By the H\"{o}lder inequality, we obtain
\[
|\tilde{b}(\tvu,\tvv,\tvw)| =
\left |\sum_{i,j = 1}^{3}\int_{\Omega}
    \tilde{u}_j \frac{\partial \tilde{v}_i}{\partial x_j} \tilde{w}_i
             \right|
    \le \|\tvu\|_{\lL^{4}(\Omega)} \| \nabla \tvv \|_{\lL^{2}(\Omega)}
                                                 \|\tvw\|_{\lL^4(\Omega)}.
\]
Restricting to the sphere by \eqref{btilde} and using \eqref{Vnorm_ext}, we obtain \eqref{b_estimate3}.
\end{proof}

In view of \eqref{b_estimate3}, $b$ is a bounded trilinear map from
$\lL^4(\S) \times V \times \lL^4(\S)$ to $\R$. Moreover, we have the following
result:

\begin{lemma}\label{lem:bL4}
The trilinear map $b:V \times V \times V \rightarrow \R$ has a unique
extension to a bounded trilinear map from $\left( \lL^4(\S) \cap H\right) \times \lL^4(\S) \times V$
to $\R$.
\end{lemma}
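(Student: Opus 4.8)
The plan is to reduce everything to the already-established bound \eqref{b_estimate3} by using the skew-symmetry \eqref{skew} to interchange the last two arguments of $b$. For $\vu,\vv,\vw\in V$ one has, by \eqref{skew}, $b(\vu,\vv,\vw)=-b(\vu,\vw,\vv)$, and the right-hand side is of exactly the shape controlled by \eqref{b_estimate3}, with the $V$-norm now sitting on $\vw$ and the $\lL^4$-norms on $\vu$ and $\vv$:
\[
 |b(\vu,\vv,\vw)|=|b(\vu,\vw,\vv)|\le C\,\|\vu\|_{\lL^4(\S)}\,\|\vw\|_V\,\|\vv\|_{\lL^4(\S)} .
\]
This already dictates the extension. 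Recall that, as observed right after \eqref{b_estimate3}, $b$ extends to a bounded trilinear map on $\lL^4(\S)\times V\times\lL^4(\S)$. Hence for $(\vu,\vv,\vw)\in(\lL^4(\S)\cap H)\times\lL^4(\S)\times V$ the quantity $b(\vu,\vw,\vv)$ makes sense — the triple $(\vu,\vw,\vv)$ lies in $\lL^4(\S)\times V\times\lL^4(\S)$ — so I would simply set
\[
 \widehat b(\vu,\vv,\vw):=-\,b(\vu,\vw,\vv),\qquad (\vu,\vv,\vw)\in(\lL^4(\S)\cap H)\times\lL^4(\S)\times V .
\]
By \eqref{b_estimate3} the form $\widehat b$ is trilinear and bounded, with $|\widehat b(\vu,\vv,\vw)|\le C\|\vu\|_{\lL^4(\S)}\|\vv\|_{\lL^4(\S)}\|\vw\|_V$, and for $\vu,\vv,\vw\in V$ the identity \eqref{skew} gives $\widehat b(\vu,\vv,\vw)=-b(\vu,\vw,\vv)=b(\vu,\vv,\vw)$, so $\widehat b$ does extend $b$. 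Note that it is exactly the demand that $\widehat b$ restrict to $b$ on $V\times V\times V$ — equivalently that $b(\vu,\vv,\vw)=-b(\vu,\vw,\vv)$ hold — which forces the first slot to consist of divergence-free fields, accounting for the appearance of $\lL^4(\S)\cap H$ rather than $\lL^4(\S)$.

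For uniqueness I would argue by density. The subspace $V\times C^\infty(T\S)\times V$ is dense in $(\lL^4(\S)\cap H)\times\lL^4(\S)\times V$: indeed $C^\infty(T\S)$ is dense in $\lL^4(\S)$, and $V$ is dense in $\lL^4(\S)\cap H$ in the $\lL^4$-norm. The latter is the one mildly technical point and follows from the $\lL^p$-theory of the Helmholtz/Hodge decomposition on the compact surface $\S$: the Leray--Helmholtz projection $\mathrm P$ is bounded on $\lL^p(\S)$ for every $1<p<\infty$ and carries $C^\infty(T\S)$ into $C^\infty(T\S)\cap H\subset V$, so $\mathrm P\,C^\infty(T\S)$ is a dense subspace of $\mathrm P\,\lL^4(\S)=\lL^4(\S)\cap H$. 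On $V\times C^\infty(T\S)\times V$ the form $b$ is unambiguously given by the integral in \eqref{trilinear}, and $\widehat b$ agrees with it there (again by \eqref{skew}, which is valid as soon as the first argument lies in $V$ and the remaining two in $\hH^1(\S)$). Since $\widehat b$ is continuous and $V\times C^\infty(T\S)\times V$ is dense, $\widehat b$ is the only bounded trilinear extension.

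The conceptual heart of the argument — swapping the last two slots through \eqref{skew} so as to fall under the hypotheses of \eqref{b_estimate3} — is very short; the only genuine obstacle is the density of the smooth divergence-free fields in $\lL^4(\S)\cap H$ for the $\lL^4$-topology, which I would dispose of by invoking the $\lL^p$-boundedness of the Leray projection on $\S$ (equivalently, the analyticity of the Stokes semigroup $e^{-t\A}$ on $\lL^p(\S)\cap H$, which yields $e^{-t\A}\vu\to\vu$ in $\lL^4$ with $e^{-t\A}\vu\in\bigcap_k\calD(\A^k)\subset V$).
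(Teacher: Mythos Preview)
Your argument is correct and matches the paper's approach: the paper does not spell out a proof beyond remarking that \eqref{b_estimate3} makes $b$ bounded on $\lL^4(\S)\times V\times\lL^4(\S)$, and your use of the skew-symmetry \eqref{skew} to swap the last two slots so that \eqref{b_estimate3} applies---together with the density argument for uniqueness---is exactly the intended way to pass from there to $(\lL^4(\S)\cap H)\times\lL^4(\S)\times V$. In effect you have supplied the details the paper omits.
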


It follows from \eqref{b_estimate3} that $b$ is a bounded trilinear map
from $\lL^4(\S)~\times~V~\times~\lL^4(\S)$ to $\R$. It follows that $\B$ maps
$\lL^4(S) \cap H$ (and so $V$) into $V^\prime$ and
\be\lb{equ:Bu L4}
 \|\B(\vu)\|_{V^\prime} \le C_1 \|\vu\|^2_{\lL^4(\S)} \le  C_2 \|\vu\| \|\vu\|_V
                 \le C_3 \|\vu\|^2_V, \quad \vu \in V.
\ee
\section{Stochastic Navier--Stokes equation on a rotating unit sphere}
By adding a white noise term to \eqref{NSE_sphere}, we obtain the stochastic NSE on the sphere, which is the main object of our analysis:
is
\begin{equation}\label{snss}
\partial_t \vu + \nabla_{\vu} \vu - \nu \bL \vu + \vomega\times \vu +\Grad p = \vf + n(\x,t),
\quad \Div \vu = 0, \quad \vu(\x,0) = \vu_0.
\end{equation}
We assume that $\vu_0 \in H$, $\vf \in V^\prime$ and $n(x,t)$ is a Gaussian random
field which is a white noise in time. By applying  the Leray-Helmholtz projection  we can interpret equation \eqref{snss} as the following abstract   It\^o equation in $H$
\begin{equation}\label{sNSEs}
 d\vu(t) + \A \vu(t) dt + \B(\vu(t),\vu(t)) dt + \CC \vu = \vf dt + G dW(t),
 \quad \vu(0) = \vu_0.
 \end{equation}
Here $\vf$ is the deterministic forcing term and $\vu_0$ is the initial velocity.

We assume that $W$ is a cylindrical Wiener process \cite{DaPZab92} defined
on a  probability space $(\Omega,\mathcal{F},\bbP)$.
$G$ is a linear continuous operator,
which determines the spatial smoothness of the noise term, satisfying
further assumptions to be specified later.

Roughly speaking, a solution to problem \eqref{sNSEs} is a process $\vu(t)$, $t\ge 0$,
which can be represented in the form $\vu(t) = \vv(t) + \ovz_\alpha(t)$, where
$\ovz_\alpha(t)$, $t\in \R$, is a stationary Ornstein--Uhlenbeck process with
drift $-\nu\A - \CC -\alpha I$, i.e. a stationary solution of
\be\label{Orn Uhl}
 d\ovz_{\alpha} + (\nu\A+\CC +\alpha)\ovz_{\alpha} dt = G dW(t), \quad t \in \R,
\ee
and $\vv(t)$, $t \ge 0$, is the solution to the following problem
(with $\vv_0 = \vu_0 - \ovz_{\alpha}(0)$):
\begin{equation}\label{equ:ode}
\left\{\begin{array}{l}
 \partial_t \vv = -\nu \A\vv - \B(\vv+\ovz_{\alpha},\vv+\ovz_{\alpha}) - \CC \vv +
 \alpha \ovz_{\alpha} + \vf, \\
 \vv(0) = \vv_0.
\end{array}\right.
\end{equation}

\begin{definition}
Suppose that
$\ovz \in L_{\loc}^4([0,\infty);\lL^4(\S))$, $\vf \in V^\prime$
and $\vv_0 \in H$. A function
$\vv \in C([0,\infty);H) \cap L^2_{\loc}([0,\infty);V)$
is a solution to problem \eqref{equ:ode} if and only if
$\vv(0) = \vv_0$ and \eqref{equ:ode} holds in the weak sense, i.e. for any $\phi \in V$,
\be\label{def:soln}
\partial_t (\vv,\phi) = -\nu(\vv,\A\phi) - b(\vv+\ovz,\vv+\ovz,\phi)
    - (\CC \vv,\phi) + (\alpha \ovz + \vf, \phi).
\ee
\end{definition}

We remark that for \eqref{def:soln} to make sense, it is sufficient to
assume that $\vv \in L^2(0,T;V) \cap L^\infty(0,T;H)$.

\deld{{\coma{Below we assumed that $\ovz \in L^4_{\loc}([0,\infty);\lL^4(\S))
\cap L^2_{\loc}([0,\infty);V)$. I think this was an error and we don't need the second fact. But Please check!}
}}
\begin{theorem}\label{thm:existence}
Assume that $\alpha \ge 0$,
$\ovz \in L^4_{\loc}([0,\infty);\lL^4(\S) \cap H)$,
$\vv_0~\in H$ and~$\vf\in V^\prime$. Then then there
exists a unique solution $\vv$ of problem \eqref{equ:ode}.
\end{theorem}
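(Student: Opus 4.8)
The plan is to construct the solution by the Faedo--Galerkin method, exactly as in the classical treatment of the 2D deterministic Navier--Stokes equations, but carrying the stochastic convection term $\ovz$ as a fixed datum in $L^4_{\loc}([0,\infty);\lL^4(\S)\cap H)$. First I would fix $T>0$ and work on $[0,T]$; the solution on $[0,\infty)$ is then obtained by concatenation. Let $H_n=\mathrm{span}\{\Z_{\ell,m}:1\le\ell\le n,\,|m|\le\ell\}$ and let $P_n$ be the orthogonal projection of $H$ onto $H_n$. The Galerkin system
\[
\partial_t\vv_n=-\nu\A\vv_n-P_n\B(\vv_n+\ovz,\vv_n+\ovz)-P_n\CC\vv_n+\alpha P_n\ovz+P_n\vf,\qquad \vv_n(0)=P_n\vv_0,
\]
is a finite-dimensional ODE with locally Lipschitz (indeed quadratic) right-hand side, so it has a unique local solution; the a priori estimate below shows it is global.

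The key a priori estimate comes from testing with $\vv_n$. Using $(\CC\vv_n,\vv_n)=0$ from \eqref{Cuu} and the cancellation $b(\vv_n+\ovz,\vv_n,\vv_n)=0$ from \eqref{skew}, one gets
\[
\tfrac12\tfrac{d}{dt}\|\vv_n\|^2+\nu\|\vv_n\|_V^2\lesssim |b(\vv_n+\ovz,\ovz,\vv_n)|+|(\alpha\ovz+\vf,\vv_n)|.
\]
The delicate term is $b(\vv_n+\ovz,\ovz,\vv_n)$. Here I would split it as $b(\vv_n,\ovz,\vv_n)+b(\ovz,\ovz,\vv_n)$ and estimate both using \eqref{b_estimate3}, i.e. $|b(\vu,\vw,\vz)|\le C\|\vu\|_{\lL^4}\|\vw\|_V\|\vz\|_{\lL^4}$, together with the interpolation \eqref{equ:L4}, $\|\vu\|_{\lL^4}\le C\|\vu\|^{1/2}\|\vu\|_V^{1/2}$. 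Wait --- this requires $\ovz\in V$, which we do not assume; so instead I would use the skew-symmetry \eqref{skew} in the other slot, writing $b(\vv_n+\ovz,\ovz,\vv_n)=-b(\vv_n+\ovz,\vv_n,\ovz)=-b(\vv_n,\vv_n,\ovz)-b(\ovz,\vv_n,\ovz)$, and then apply the extension of $b$ to $(\lL^4\cap H)\times\lL^4\times V$ from Lemma~\ref{lem:bL4}: indeed $|b(\vv_n,\vv_n,\ovz)|\le C\|\vv_n\|_{\lL^4}\|\vv_n\|_V\|\ovz\|_{\lL^4}$ is \emph{not} quite of that form, but $|b(\vv_n,\vv_n,\ovz)|=|b(\vv_n,\ovz,\vv_n)|$... the correct route is to note that \eqref{b_estimate3} itself, with the middle slot being $\vv_n\in V$ and the outer slots $\vv_n,\ovz\in\lL^4$, gives $|b(\vv_n,\vv_n,\ovz)|\le C\|\vv_n\|_{\lL^4}\|\vv_n\|_V\|\ovz\|_{\lL^4}\le C\|\vv_n\|^{1/2}\|\vv_n\|_V^{3/2}\|\ovz\|_{\lL^4}$, and similarly $|b(\ovz,\vv_n,\ovz)|\le C\|\ovz\|_{\lL^4}^2\|\vv_n\|_V$. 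Absorbing the $\|\vv_n\|_V$ powers into $\nu\|\vv_n\|_V^2$ via Young's inequality (with exponents $4/3$ and $4$ for the first term), and treating $(\vf,\vv_n)\le\|\vf\|_{V'}\|\vv_n\|_V$ the same way, yields
\[
\tfrac{d}{dt}\|\vv_n\|^2+\nu\|\vv_n\|_V^2\le C\big(1+\|\ovz(t)\|_{\lL^4}^4\big)\|\vv_n\|^2+C\big(\|\ovz(t)\|_{\lL^4}^4+\|\ovz(t)\|_{\lL^4}^2+\|\vf\|_{V'}^2\big).
\]
Since $\ovz\in L^4(0,T;\lL^4)$, Gronwall gives a bound on $\sup_{[0,T]}\|\vv_n\|^2+\int_0^T\|\vv_n\|_V^2$ independent of $n$, hence global existence of $\vv_n$ and a uniform bound in $L^\infty(0,T;H)\cap L^2(0,T;V)$. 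A companion estimate on $\partial_t\vv_n$ in $L^{4/3}(0,T;V')$ (or $L^2(0,T;V')$, using $\|\B(\vv_n+\ovz)\|_{V'}\lesssim\|\vv_n+\ovz\|_{\lL^4}^2$ via \eqref{equ:Bu L4} and Lemma~\ref{lem:bL4}) follows by comparison in the equation.

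With these bounds, I would extract a subsequence with $\vv_n\rightharpoonup\vv$ weakly in $L^2(0,T;V)$, weak-$\ast$ in $L^\infty(0,T;H)$, $\partial_t\vv_n\rightharpoonup\partial_t\vv$ in $L^{4/3}(0,T;V')$, and --- by the Aubin--Lions--Simon compactness lemma, using $V\hookrightarrow\hookrightarrow H\hookrightarrow V'$ --- strongly in $L^2(0,T;H)$ and (along a further subsequence) a.e. in time with values in $H$. The strong $L^2(0,T;H)$ convergence combined with the uniform $L^2(0,T;V)$ bound and interpolation gives $\vv_n\to\vv$ strongly in $L^2(0,T;\lL^4(\S))$, which is exactly what is needed to pass to the limit in the quadratic term: for fixed $\phi\in\bigcup_n H_n$, $\int_0^T b(\vv_n+\ovz,\vv_n+\ovz,\phi)\,dt\to\int_0^T b(\vv+\ovz,\vv+\ovz,\phi)\,dt$ using the continuity of $b$ on $(\lL^4\cap H)\times\lL^4\times V$ from Lemma~\ref{lem:bL4} (write $b(\vv_n+\ovz,\vv_n+\ovz,\phi)=-b(\vv_n+\ovz,\phi,\vv_n+\ovz)$ so that the rough factors sit in the $\lL^4$ slots). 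The linear terms pass to the limit by weak convergence. This identifies $\vv$ as a solution in the sense of \eqref{def:soln}; that $\vv\in C([0,\infty);H)$ with $\vv(0)=\vv_0$ follows from $\vv\in L^2(0,T;V)$, $\partial_t\vv\in L^{4/3}(0,T;V')$ (so $\vv\in C([0,T];V')$ and weakly continuous into $H$) together with the energy estimate, which upgrades weak continuity to strong continuity in $H$ in the standard way.

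For uniqueness, let $\vv^{(1)},\vv^{(2)}$ be two solutions with the same data and set $\vw=\vv^{(1)}-\vv^{(2)}$, so $\vw(0)=0$ and
\[
\partial_t\vw+\nu\A\vw+\CC\vw=-\B(\vv^{(1)}+\ovz,\vv^{(1)}+\ovz)+\B(\vv^{(2)}+\ovz,\vv^{(2)}+\ovz)=-\B(\vw,\vv^{(1)}+\ovz)-\B(\vv^{(2)}+\ovz,\vw).
\]
Testing with $\vw$, the term $b(\vv^{(2)}+\ovz,\vw,\vw)$ vanishes by \eqref{skew} and $(\CC\vw,\vw)=0$, leaving
\[
\tfrac12\tfrac{d}{dt}\|\vw\|^2+\nu\|\vw\|_V^2=-b(\vw,\vv^{(1)}+\ovz,\vw)=b(\vw,\vw,\vv^{(1)}+\ovz),
\]
and by \eqref{b_estimate3} and \eqref{equ:L4},
\[
|b(\vw,\vw,\vv^{(1)}+\ovz)|\le C\|\vw\|_{\lL^4}^2\|\vv^{(1)}+\ovz\|_V\le C\|\vw\|\,\|\vw\|_V\,\|\vv^{(1)}+\ovz\|_V\le\tfrac{\nu}{2}\|\vw\|_V^2+\tfrac{C}{\nu}\|\vv^{(1)}+\ovz\|_V^2\|\vw\|^2.
\]
Hmm --- $\|\vv^{(1)}+\ovz\|_V$ is not available, since $\ovz\notin V$; here I would instead keep the $\ovz$-part separately: $b(\vw,\vw,\vv^{(1)})$ is handled as above with $\vv^{(1)}\in L^2(0,T;V)$, and $b(\vw,\vw,\ovz)=-b(\vw,\ovz,\vw)$ is bounded by $C\|\vw\|_{\lL^4}^2\|\ovz\|_V$... which again needs $\ovz\in V$. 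The correct estimate uses \eqref{b_estimate3} directly on $b(\vw,\vw,\ovz)$ with the \emph{middle} slot $\vw\in V$: $|b(\vw,\vw,\ovz)|\le C\|\vw\|_{\lL^4}\|\vw\|_V\|\ovz\|_{\lL^4}\le C\|\vw\|^{1/2}\|\vw\|_V^{3/2}\|\ovz\|_{\lL^4}\le\tfrac{\nu}{4}\|\vw\|_V^2+C\|\ovz\|_{\lL^4}^4\|\vw\|^2$ by Young. Thus
\[
\tfrac{d}{dt}\|\vw\|^2\le C\big(\|\vv^{(1)}(t)\|_V^2+\|\ovz(t)\|_{\lL^4}^4\big)\|\vw\|^2,
\]
and since $t\mapsto\|\vv^{(1)}(t)\|_V^2+\|\ovz(t)\|_{\lL^4}^4\in L^1(0,T)$, Gronwall (with $\|\vw(0)\|=0$) forces $\vw\equiv0$. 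The main obstacle throughout is the same recurring subtlety: $\ovz$ is only in $\lL^4$, not in $V$, so every estimate on the nonlinear term that naively wants to put $\ovz$ in a ``derivative'' slot must be rearranged via the skew-symmetry \eqref{skew} and the sharp trilinear bound \eqref{b_estimate3}/Lemma~\ref{lem:bL4} so that $\ovz$ appears only in $\lL^4$ slots, the $V$-regular factor being $\vv$ (or the Galerkin iterate, or the difference $\vw$); getting all these absorptions to close with integrable-in-time coefficients is the crux of both existence and uniqueness.
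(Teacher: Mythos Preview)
Your proposal is correct and follows the same overall strategy as the paper: Faedo--Galerkin approximation with the energy estimate obtained by testing with $\vv_n$, using $(\CC\vv_n,\vv_n)=0$, the cancellations \eqref{skew}, and the key bound $|b(\vv_n,\vv_n,\ovz)|\le C\|\vv_n\|^{1/2}\|\vv_n\|_V^{3/2}\|\ovz\|_{\lL^4}$ from \eqref{b_estimate3} and \eqref{equ:L4}; then Gronwall, compactness, passage to the limit, and the Lions--Prodi argument for uniqueness. Your repeated self-corrections (never letting $\ovz$ land in the $V$-slot) are exactly the point, and the paper handles it the same way, packaging $-\B(\ovz)+\alpha\ovz+\vf$ as a single forcing $F\in L^2(0,T;V')$.

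There is one genuine methodological difference worth noting. For compactness, you bound $\partial_t\vv_n$ in $L^{4/3}(0,T;V')$ (in fact $L^2(0,T;V')$, since $\|\B(\vv_n+\ovz)\|_{V'}\lesssim\|\vv_n+\ovz\|_{\lL^4}^2\in L^2(0,T)$) and invoke Aubin--Lions--Simon to get $\vv_n\to\vv$ strongly in $L^2(0,T;H)$, then interpolate to strong $L^2(0,T;\lL^4)$. The paper instead uses Temam's fractional-in-time criterion (Theorem~\ref{thm:compact}): it shows $\widetilde{\vv}_L$ is bounded in $\calH^{\gamma,2}(\R;V,H)$ for some $\gamma<1/4$ via Fourier-transform estimates on the equation, and deduces strong $L^2(0,T;H)$-compactness from that. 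Your route is shorter and more transparent; the paper's is the classical Temam argument and has the minor advantage of never needing to check a time-Lebesgue exponent on $\partial_t\vv_n$. Either way, once strong $L^2(0,T;H)$ convergence is in hand, your interpolation to $L^2(0,T;\lL^4)$ is a cleaner way to pass to the limit in the nonlinear term than the paper's Lemma~\ref{lem:bvmvm}, which tests against smooth $\psi(t)\vphi$ and uses \eqref{b_estimate1}. Finally, since you do have $\partial_t\vv\in L^2(0,T;V')$, you can quote Lemma~\ref{lem:uprime} directly for $\vv\in C([0,T];H)$, avoiding the weak-continuity-plus-energy detour you sketched.
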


\deld{{\coma{I removed a factor $\cap L^2(0,T;V^\prime),$ from below.}}}
\begin{theorem}\label{thm:limit}
Assume that $\vu_{0n} \rightarrow \vu_0$ in $H$, and, for some fixed $T>0$,
\[
\ovz_n \rightarrow \ovz \mbox{ in } L^4([0,T];\lL^4(\S) \cap H)
\mbox{ and }
\vf_n \rightarrow \vf \mbox{ in\,\,} L^2(0,T;V^\prime).
\]
Let us denote by $\vv(t,\ovz)\vu_0$ the solution of problem \eqref{equ:ode}
and by $\vv(t,\ovz_n) \vu_{0n}$ the solution of problem \eqref{equ:ode} with
$\ovz, \vf, \vu_0$ being replaced by $\ovz_n, \vf_n, \vu_{0n}$. Then
\[
 \vv(\cdot, \ovz_n) \vu_{0n} \rightarrow \vv(\cdot,\ovz) \vu_0 \mbox{ in } C([0,T];H) \cap L^2(0,T;V).
\]
In particular, $\vv(T,\ovz_n)\vu_{0n} \rightarrow \vv(T,\ovz)\vu_0$ in $H$.
\end{theorem}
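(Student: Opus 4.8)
The plan is to set $\vv_n := \vv(\cdot,\ovz_n)\vu_{0n}$ and $\vv := \vv(\cdot,\ovz)\vu_0$, write the equation satisfied by the difference $\vw_n := \vv_n - \vv$, and run a Gronwall-type energy estimate exactly as in the uniqueness proof (the modified Lions--Prodi argument referenced for Theorem \ref{thm:existence}). First I would note that the a priori bounds obtained in the proof of Theorem \ref{thm:existence} are \emph{uniform} in $n$: since $\vu_{0n}\to\vu_0$ in $H$, $\ovz_n\to\ovz$ in $L^4(0,T;\lL^4(\S)\cap H)$, and $\vf_n\to\vf$ in $L^2(0,T;V')$, the sequences $(\vu_{0n})$, $(\ovz_n)$, $(\vf_n)$ are bounded in their respective spaces; feeding these bounds into the energy estimate for \eqref{equ:ode} gives
\begin{equation*}
\sup_n\Big(\|\vv_n\|_{C([0,T];H)} + \|\vv_n\|_{L^2(0,T;V)}\Big) =: R < \infty .
\end{equation*}
This uniform bound is what makes the nonlinear terms controllable.

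Next I would subtract the weak formulations \eqref{def:soln} for $\vv_n$ and $\vv$. Writing $\bphi = \bphi_n$ and using the bilinearity of $b$ together with the decompositions $\vv_n+\ovz_n = \vv+\ovz + (\vw_n + \ovz_n-\ovz)$, the difference equation takes the schematic form
\begin{equation*}
\tfrac12\tfrac{d}{dt}\|\vw_n\|^2 + \nu\|\vw_n\|_V^2 \;\lesssim\; |\text{trilinear terms}| + |(\CC\vw_n,\vw_n)| + |(\alpha(\ovz_n-\ovz) + \vf_n-\vf,\vw_n)| .
\end{equation*}
The Coriolis term vanishes by \eqref{Cuu} (applied to $\vw_n\in V\subset H$), and the forcing/drift term is handled by Cauchy--Schwarz and Young's inequality, absorbing $\tfrac{\nu}{4}\|\vw_n\|_V^2$ on the left and leaving $C(\|\ovz_n-\ovz\|^2 + \|\vf_n-\vf\|_{V'}^2)$, which is integrable in $t$ and $\to0$ in $L^1(0,T)$ by hypothesis. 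For the trilinear terms one uses $b(\vw_n+\cdots,\vv,\vw_n)$-type expressions: after exploiting the antisymmetry \eqref{skew} to kill $b(\cdot,\vw_n,\vw_n)$, the surviving terms are estimated by \eqref{b_estimate3}, i.e. $|b(\vv_1,\vv_2,\vv_3)|\le C\|\vv_1\|_{\lL^4}\|\vv_2\|_V\|\vv_3\|_{\lL^4}$, combined with the interpolation \eqref{equ:L4}: $\|\vw_n\|_{\lL^4}^2\le C\|\vw_n\|\,\|\vw_n\|_V$. A typical worst term, $b(\vw_n,\vv,\vw_n)$, is thus bounded by $C\|\vw_n\|\,\|\vw_n\|_V\,\|\vv\|_V \le \tfrac{\nu}{4}\|\vw_n\|_V^2 + C\|\vv\|_V^2\|\vw_n\|^2$, and the cross terms involving $\ovz_n-\ovz$ are bounded by $C\|\ovz_n-\ovz\|_{\lL^4}\,\|\vv\|_V\,\|\vw_n\|_{\lL^4}$ (and similar), which after Young's inequality contribute a term absorbed on the left plus $C\|\vv\|_V^2\|\ovz_n-\ovz\|_{\lL^4}^2$ or $C\|\ovz_n-\ovz\|_{\lL^4}^2\|\vw_n\|^2$, all with constants depending only on $R$.

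Collecting everything yields a differential inequality of the form
\begin{equation*}
\tfrac{d}{dt}\|\vw_n(t)\|^2 + \nu\|\vw_n(t)\|_V^2 \;\le\; g_n(t)\,\|\vw_n(t)\|^2 + h_n(t),
\end{equation*}
where $g_n(t) = C(\|\vv(t)\|_V^2 + \|\ovz_n(t)\|_{\lL^4}^2 + \|\ovz(t)\|_{\lL^4}^2)$ is bounded in $L^1(0,T)$ uniformly in $n$ (again by the uniform bound $R$), and $h_n(t) = C(\|\vv(t)\|_V^2\|\ovz_n(t)-\ovz(t)\|_{\lL^4}^2 + \|\ovz_n(t)-\ovz(t)\|^2 + \|\vf_n(t)-\vf(t)\|_{V'}^2)$ satisfies $\|h_n\|_{L^1(0,T)}\to0$. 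Gronwall's lemma then gives
\begin{equation*}
\sup_{t\in[0,T]}\|\vw_n(t)\|^2 \le \Big(\|\vu_{0n}-\vu_0\|^2 + \|h_n\|_{L^1(0,T)}\Big)\exp\!\Big(\|g_n\|_{L^1(0,T)}\Big) \longrightarrow 0,
\end{equation*}
which is convergence in $C([0,T];H)$; integrating the differential inequality over $[0,T]$ then yields $\nu\int_0^T\|\vw_n\|_V^2\,dt \to 0$, i.e. convergence in $L^2(0,T;V)$. The final assertion about $\vv(T,\ovz_n)\vu_{0n}\to\vv(T,\ovz)\vu_0$ in $H$ is immediate from convergence in $C([0,T];H)$. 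The main technical obstacle is bookkeeping the cross terms that mix $\vw_n$ with $\ovz_n-\ovz$: one must be careful that every such term is split by Young's inequality into a piece absorbed by $\tfrac{\nu}{2}\|\vw_n\|_V^2$ and a piece that is either of the form $g_n\|\vw_n\|^2$ (integrable coefficient, handled by Gronwall) or of the form $h_n$ (small in $L^1$), without ever needing a bound on $\|\ovz_n-\ovz\|_{L^2(0,T;V)}$ — only the $L^4$-in-space norm is available, which is exactly why the estimate \eqref{b_estimate3} rather than \eqref{b_estimate1} or \eqref{b_estimate2} is the right tool here.
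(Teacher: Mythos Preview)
Your proposal is correct and follows exactly the paper's route: derive the energy identity for the difference $\vy_n=\vv_n-\vv$, expand $\B(\vv_n+\ovz_n)-\B(\vv+\ovz)$ into cross trilinear terms, kill those of the form $b(\cdot,\vy_n,\vy_n)$ via \eqref{skew}, estimate the rest with \eqref{b_estimate3} and the interpolation \eqref{equ:L4}, absorb pieces of $\nu\|\vy_n\|_V^2$ by Young, apply Gronwall for $C([0,T];H)$-convergence, and finally integrate the differential inequality for $L^2(0,T;V)$-convergence. The paper carries out precisely this computation, term by term.

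One small caution on your schematic form of $h_n$: the contribution $\|\vv\|_V^2\,\|\ovz_n-\ovz\|_{\lL^4}^2$ is not obviously $L^1(0,T)$, since $\|\vv\|_V^2\in L^1$ while $\|\ovz_n-\ovz\|_{\lL^4}^2$ is only $L^2$. The paper sidesteps this by first using antisymmetry \eqref{skew} to place $\vy_n$ in the $V$-slot of \eqref{b_estimate3} (e.g.\ $b(\hat\ovz_n,\vv,\vy_n)=-b(\hat\ovz_n,\vy_n,\vv)\le C\|\hat\ovz_n\|_{\lL^4}\|\vy_n\|_V\|\vv\|_{\lL^4}$), which after Young yields instead $\|\hat\ovz_n\|_{\lL^4}^2\|\vv\|_{\lL^4}^2\le C\|\hat\ovz_n\|_{\lL^4}^2\|\vv\|\,\|\vv\|_V$; this is in $L^1$ by H\"older ($L^2\cdot L^\infty\cdot L^2$) and tends to zero. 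Your closing remark about the bookkeeping being the delicate point is exactly right---this is where that care is needed.
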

\section{Proofs of Theorems \ref{thm:existence} and \ref{thm:limit}}

\subsection{Proof of Theorem \ref{thm:existence}}
For the proof, we need the following classical result, see \cite[Lemma III.1.2]{Tem79}.

\begin{lemma}\label{lem:uprime}
Suppose that $ V \subset H \cong H^\prime \subset V^\prime$ is a Gelfand triple of Hilbert spaces.
If a function $\vu$ belongs to $L^2(0,T;V)$ and its weak derivative belongs to $L^2(0,T;V^\prime)$,
then $\vu$ is a.e. equal to a continuous function from $[0,T]$ to $H$, the real-valued function
$\|\vu\|^2$ is absolutely continuous and, in the weak sense of $(0,T)$, one has
(with $\langle\cdot,\cdot\rangle$ being the duality between $V^\prime$ and $V$)
\be\lb{equ:du2}
 \partial_t \|\vu(t)\|^2 = 2 \langle \partial_t \vu(t), \vu(t) \rangle.
\ee
\end{lemma}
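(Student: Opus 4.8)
The plan is to follow the classical mollification-and-density argument. Write $W := \{\vu \in L^2(0,T;V): \partial_t \vu \in L^2(0,T;V^\prime)\}$, a Hilbert space under $\|\vu\|_W^2 = \|\vu\|_{L^2(0,T;V)}^2 + \|\partial_t\vu\|_{L^2(0,T;V^\prime)}^2$. The crucial structural fact, built into the Gelfand triple $V \subset H \cong H^\prime \subset V^\prime$, is that for $f \in H$ and $v \in V$ the duality pairing collapses to the inner product, $\inprod{f}{v} = (f,v)_H$; this is exactly what ties the right-hand side of \eqref{equ:du2} to the $H$-norm. I would first record that \eqref{equ:du2} is trivial for time-regular functions: if $\vu \in C^1([0,T];V)$ then $t\mapsto \|\vu(t)\|_H^2$ is $C^1$ and the product rule gives $\partial_t\|\vu(t)\|_H^2 = 2(\partial_t\vu(t),\vu(t))_H = 2\inprod{\partial_t\vu(t)}{\vu(t)}$, the last equality by the collapse of the pairing since $\partial_t\vu(t) \in V \subset H$.

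The heart of the argument is the density step: $C^1([0,T];V)$ is dense in $W$. I would obtain this by extension and mollification in the time variable. Since $\vu \in L^2(0,T;V)\subset L^2(0,T;V^\prime)$ has $\partial_t\vu \in L^2(0,T;V^\prime)$, it lies in $H^1(0,T;V^\prime)$ and hence already has a representative in $C([0,T];V^\prime)$; consequently it may be reflected evenly across $t=0$ and $t=T$ without producing a jump in $V^\prime$. The reflected function lies in $L^2(-T,2T;V)$, and its weak time-derivative is the odd reflection of $\partial_t\vu$, lying in $L^2(-T,2T;V^\prime)$. Convolving with a standard temporal mollifier $\rho_\eps$ then produces $\vu_\eps := \rho_\eps * \vu \in C^\infty([0,T];V)$ with $\vu_\eps \to \vu$ in $L^2(0,T;V)$ and $\partial_t\vu_\eps = \rho_\eps*\partial_t\vu \to \partial_t\vu$ in $L^2(0,T;V^\prime)$. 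I expect this density step---specifically verifying that the odd reflection is genuinely the $V^\prime$-valued weak derivative of the even reflection, with no boundary Dirac mass, which relies on the $C([0,T];V^\prime)$ continuity noted above---to be the main technical obstacle; the remaining steps are soft.

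With density secured, the limit passage is routine. Applying the smooth identity to $\vu_n - \vu_m$ and integrating over $s \in (0,T)$ yields
\[
\|\vu_n(t)-\vu_m(t)\|_H^2 = \tfrac1T\|\vu_n-\vu_m\|_{L^2(0,T;H)}^2 + \tfrac2T\int_0^T\!\!\int_s^t \inprod{\partial_t(\vu_n-\vu_m)}{\vu_n-\vu_m}\,d\tau\,ds ,
\]
and both terms are controlled by $\|\vu_n-\vu_m\|_W^2$ via Cauchy--Schwarz and the continuous embedding $V\hookrightarrow H$. Hence $(\vu_n)$ is Cauchy in $C([0,T];H)$; its limit agrees a.e.\ with $\vu$, furnishing the continuous $H$-valued representative. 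Finally I would pass to the limit in
\[
\|\vu_n(t)\|_H^2 - \|\vu_n(s)\|_H^2 = 2\int_s^t \inprod{\partial_t\vu_n}{\vu_n}\,d\tau ,
\]
the left side by uniform convergence in $H$ and the right side because $\inprod{\partial_t\vu_n}{\vu_n}\to\inprod{\partial_t\vu}{\vu}$ in $L^1(0,T)$ (split as $\inprod{\partial_t(\vu_n-\vu)}{\vu_n}+\inprod{\partial_t\vu}{\vu_n-\vu}$ and use Cauchy--Schwarz). This gives $\|\vu(t)\|_H^2 - \|\vu(s)\|_H^2 = 2\int_s^t\inprod{\partial_t\vu}{\vu}\,d\tau$ with integrand in $L^1(0,T)$, whence $t\mapsto\|\vu(t)\|_H^2$ is absolutely continuous and \eqref{equ:du2} holds in the weak sense on $(0,T)$.
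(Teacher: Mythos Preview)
Your proof is correct and follows the standard mollification-and-density argument. Note, however, that the paper does not actually prove this lemma: it states it as a classical result and cites \cite[Lemma III.1.2]{Tem79} for the proof, which is precisely the argument you have reconstructed.
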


Let us assume that $X_0 \subset X \subset X_1$ are Hilbert spaces with the
injections being continuous and the injection of $X_0$ into $X$ is compact.

If $v$ is a function from $\R$ to $X_1$, we denote by $\hat{v}$ its Fourier
transform
\[
\hat{v}(\tau) = \int_{\R} e^{-2i\pi t \tau} v(t) dt.
\]
The fractional derivative in $t$ of order $\gamma$ of $v$ is the inverse Fourier transform
of the $X_1$-valued function $\{ \mathbb{R}\ni \tau \mapsto (2i \pi \tau)^{\gamma} \hat{v}(\tau)\}$, i.e.
\[\widehat{D^\gamma_t v\,}(\tau) = (2i \pi \tau)^{\gamma} \hat{v}(\tau),\;\;\; \tau \in \mathbb{R}.\]

For a given $\gamma >0$, we define the space
\[
 \calH^{\gamma,2}(\R;X_0,X_1) = \{ v \in L^2(\R;X_0): D^\gamma_t v \in L^2(\R;X_1)\}.
\]
This is a Hilbert space equipped with the norm
\[
\|v\|_{ \calH^{\gamma,2}(\R;X_0,X_1) } =
  \big(\|v\|^2_{L^2(\R;X_0)} + \| |\tau|^\gamma \hat{v}\|_{L^2(\R;X_1)} \big)^{1/2}
\]
\noindent
For a given set $K \subset \R$, the subspace $\calH^{\gamma,2}_K$ of $\calH^{\gamma,2}=\calH^{\gamma,2}(\R;X_0,X_1)$
is defined by
\[
 \calH^{\gamma,2}_K(\R;X_0,X_1) = \{ u \in \calH^\gamma(\R;X_0,X_1), \mbox{supp } u \subset K\}.
\]

The compactness theorem (\cite[Theorem III.2.2]{Tem79}) is stated as follows

\begin{theorem}\label{thm:compact}
Suppose that $X_0 \subset X  \subset X_1$ is a Gelfand triple of Hilbert spaces and
the injection of $X_0$ into $X$ is compact. Then for any bounded set $K$ and $\gamma>0$,
the injection of $\calH^\gamma_K(\R;X_0,X_1)$ into $L^2(\R,X)$ is compact.
\end{theorem}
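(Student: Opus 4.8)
The plan is to give the Fourier-transform proof of this classical compactness result, exploiting Plancherel's identity to reduce the statement about $L^2(\R;X)$-convergence to one about the $X$-valued functions $\tau\mapsto\hat v(\tau)$. So let $(v_m)$ be a bounded sequence in $\calH^{\gamma,2}_K(\R;X_0,X_1)$ with $\mathrm{supp}\,v_m\subset K$ for a fixed bounded set $K$; the goal is to extract a subsequence that is Cauchy in $L^2(\R;X)$.

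First I would record two pointwise-in-$\tau$ facts about the Fourier transforms $\hat v_m(\tau)=\int_K e^{-2i\pi t\tau}v_m(t)\,dt$. Since $K$ is bounded, $v_m\in L^1(K;X_0)$ with $\|v_m\|_{L^1(K;X_0)}\le|K|^{1/2}\|v_m\|_{L^2(\R;X_0)}\le C$ uniformly; hence $\|\hat v_m(\tau)\|_{X_0}\le C$ for all $\tau$ and all $m$, and, using $|e^{-2i\pi t\tau}-e^{-2i\pi t\tau'}|\le 2\pi|t||\tau-\tau'|$, also $\|\hat v_m(\tau)-\hat v_m(\tau')\|_{X_0}\le 2\pi|\tau-\tau'|\int_K|t|\,\|v_m(t)\|_{X_0}\,dt\le C'|\tau-\tau'|$, i.e. $\{\hat v_m\}$ is uniformly Lipschitz from $\R$ to $X_0$. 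Because the embedding $X_0\hookrightarrow X$ is compact, $\{\hat v_m(\tau)\}_m$ is relatively compact in $X$ for each $\tau$ and $\{\hat v_m\}$ is equicontinuous from $\R$ to $X$; by the Arzel\`a--Ascoli theorem on each interval $[-M,M]$ together with a diagonal argument over $M\in\N$, I can pass to a subsequence (not relabelled) with $\hat v_m\to\hat v$ in $C([-M,M];X)$ for every $M$.

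Next I would estimate, for this subsequence, $\|v_m-v_k\|^2_{L^2(\R;X)}=\int_\R\|\hat v_m(\tau)-\hat v_k(\tau)\|_X^2\,d\tau$ by splitting the integral at a threshold $|\tau|=M$. On $\{|\tau|\le M\}$ the contribution is at most $2M\sup_{|\tau|\le M}\|\hat v_m(\tau)-\hat v_k(\tau)\|_X^2$, which tends to $0$ as $m,k\to\infty$ by the uniform convergence just obtained. For the tail $\{|\tau|>M\}$ I would use Ehrling's lemma (a standard consequence of the compactness of $X_0\hookrightarrow X$ together with $X\hookrightarrow X_1$ continuous): for every $\eta>0$ there is $C_\eta$ with $\|w\|_X\le\eta\|w\|_{X_0}+C_\eta\|w\|_{X_1}$, hence $\|w\|_X^2\le 2\eta^2\|w\|_{X_0}^2+2C_\eta^2\|w\|_{X_1}^2$. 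Applying this to $w=\hat v_m(\tau)-\hat v_k(\tau)$ and integrating, the $X_0$-part is controlled by $4\eta^2\sup_m\|v_m\|^2_{L^2(\R;X_0)}$, while the $X_1$-part is controlled by
\[
2C_\eta^2\int_{|\tau|>M}\|\hat v_m(\tau)-\hat v_k(\tau)\|_{X_1}^2\,d\tau
\le\frac{2C_\eta^2}{M^{2\gamma}}\int_\R|\tau|^{2\gamma}\|\hat v_m(\tau)-\hat v_k(\tau)\|_{X_1}^2\,d\tau
\le\frac{C''C_\eta^2}{M^{2\gamma}},
\]
using the uniform bound on $\||\tau|^\gamma\hat v_m\|_{L^2(\R;X_1)}$ coming from membership in $\calH^{\gamma,2}$. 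Given $\varepsilon>0$, I first fix $\eta$ small enough to make the $X_0$-part $<\varepsilon/3$, then $M$ large enough to make the $X_1$-part $<\varepsilon/3$, and finally use the first step to make the low-frequency part $<\varepsilon/3$ for $m,k$ large. This shows $(v_m)$ is Cauchy in $L^2(\R;X)$, which is the claimed compact injection.

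I expect the main obstacle to be the high-frequency tail: the spatial compactness of $X_0\hookrightarrow X$ is of no use there, and one must instead trade regularity in time (the fractional-derivative bound in $X_1$) against the loss of compactness, which is exactly what the Ehrling inequality makes quantitative. A secondary technical point is the Arzel\`a--Ascoli step, where boundedness is available only in $X_0$ while equicontinuity and the limit must be taken in the weaker norm of $X$; this is resolved by first invoking the compact embedding to obtain pointwise relative compactness in $X$.
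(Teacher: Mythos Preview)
Your proof is correct and is essentially the classical argument of Temam. Note, however, that the paper does not prove this statement at all: it merely quotes it as \cite[Theorem III.2.2]{Tem79} and uses it as a black box. The argument you have written---pointwise-in-$\tau$ compactness of $\hat v_m(\tau)$ in $X$ via the bounded support $K$ and the compact embedding $X_0\hookrightarrow X$, Arzel\`a--Ascoli plus diagonalisation for the low frequencies, and Ehrling's inequality combined with the $|\tau|^{2\gamma}$-weighted $X_1$-bound for the high-frequency tail---is precisely the proof given in Temam's book, so there is nothing to compare.
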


\noindent
{\bf Part I. Existence of solutions}

Let $H_L = \mbox{lin span} \{\Z_{\ell,m} : \ell=1,\ldots,L; |m| \le L \}$ with the
norm inherited from $H$ and $V_L = \mbox{lin span} \{\Z_{\ell,m} : \ell=1,\ldots,L; |m| \le L \}$ with
the norm inherited from $V$. We denote by $P_L$ the orthogonal projection from $H$
onto $H_L$. We consider the following approximate problem for \eqref{equ:ode}
on the finite dimensional space $H_L$:
\begin{equation}\label{equ:Galerkin}
\left\{ \begin{array}{ccl}
         \partial_t \vv_L &=& P_L \big[ - \nu \A\vv_L
         - \B(\vv_L) - \B(\vv_L,\ovz) - \B(\ovz,\vv_L)
         - \CC \vv_L + F \big], \\
         \vv_L(0)  &=& P_L \vv_0,
         \end{array}
\right.
\end{equation}
where $F = -\B(\ovz) + \alpha \ovz + \vf$.

From the condition $\ovz \in L^4_{\loc}([0,\infty);\lL^4(\S) \cap H)$
and Lemma~\ref{lem:bL4} we conclude
that $F \in L^2(0,T;V')$.

\dela{ If we express $\vv_L$ as $$\vv_L=\sum_{\ell=1}^L \sum_{m=-L}^L g_{L;\ell,m}(t) \Z_{\ell,m}$$
then by taking the inner product of \eqref{equ:Galerkin} with each eigenfunction $\Z_{\ell,m}$
\bea
(\partial_t \vv_L(t), \Z_{\ell,m}) &=&
   -\nu (\A \vv_L,\Z_{\ell,m}) - (\B(\vv_L,\vv_L),\Z_{\ell,m})
   -(\B(\vv_L,\ovz),\Z_{\ell,m}) \nonumber \\
    && - (\B(\ovz,\vv_L),\Z_{\ell,m}) - (\CC \vv_L,\Z_{\ell,m}) + (F,\Z_{\ell,m}) \label{equ:pre ode} \\
&&t \in [0,T], \;\; \ell=1,\ldots,L; \;\; m = -L,\ldots, L,  \nonumber
\eea
we can transform}
The equation  \eqref{equ:Galerkin} can written in  the following equivalent form
\begin{equation}\lb{equ:odesys}
\left\{ \begin{array}{ccl}
  \frac{d \vv_L}{dt} &=& G(t,\vv_L(t)), \quad t \ge 0, \\
  \vv_L(0) &=& P_L \vv_0,
\end{array}
\right.
\end{equation}
where the function $G(t,\vv)$ is
a function locally Lipschitz with respect to $\vv$ and measurable with respect to $t$.

By the local existence and uniqueness theorem for ordinary differential equations,
the system of nonlinear differential equations \eqref{equ:odesys} has a maximal solution
defined on some interval $[0,T_L)$. If $T_L < \infty$, then $\|\vu_L(t)\|$ must tend to $+\infty$
as $t \rightarrow t_L$; the a priori estimates we shall prove later show that this does not
happen and therefore $t_L=\infty$.

We observe that
\[
 -\nu (P_L \A \vv_L,\vv_L) = -\nu(\A \vv_L,\vv_L) = -\nu \|\vv_L\|^2_V
\]
and by \eqref{skew}
\[
 (P_L\B(\vv_L,\vv_L) = (\B(\vv_L),\vv_L) = 0, \quad
 (P_L \B(\ovz,\vv_L),\vv_L) = (\B(\ovz,\vv_L),\vv_L) = 0,
\]
and by \eqref{Cuu},
\[
 (P_L \CC \vv_L,\vv_L) = (\CC \vv_L,\vv_L) = 0.
\]
Using \eqref{b_estimate3},\eqref{equ:L4} and the Young
inequality ($ab \le a^p/p+b^q/q$ with $p=4$ and $q=4/3$) we have
\bea
|b(\vv_L,\vv_L,\ovz)| &\le & C \|\vv_L\|_{\lL^4(\S)} \|\vv_L\|_V \|\ovz\|_{\lL^4(\S)}
            \nonumber\\
             &\le& C \|\vv_L\|^{1/2} \|\vv_L\|^{3/2}_V \|\ovz\|_{\lL^4(\S)}
             \nonumber\\
            &\le & \frac{C}{\nu^3} \|\vv_L\|^2 \|\ovz\|^4_{\lL^4(\S)}
                  + \frac{\nu}{4} \|\vv_L\|^2_V \label{est1}
\eea
We also have
\be\lb{est2}
 \inprod{F(t)}{\vv_L} \le
 \|F(t)\|_{V^\prime} \|\vv_L\|_V \le \frac{1}{\nu} \|F(t)\|^2_{V^\prime} +
 \frac{\nu}{4} \|\vv_L\|^2_{V}.
\ee
Therefore, using Lemma \ref{lem:uprime} with the triple $V_L,H_L,V_L'$ we have
on $[0,T)$
\[
\frac{1}{2} \partial_t \|\vv_L(t)\|^2 =
- \nu\|\vv_L\|_V^2- b(\vv_L(t),\vv_L(t),\ovz(t)) +
  \inprod{F(t)}{\vv_L(t)} , \quad t\in [0,\infty).
\]
Using \eqref{est1}-\eqref{est2} we conclude that
\be\lb{equ:odevL}
 \partial_t \|\vv_L(t)\|^2 + \nu \|\vv_L(t)\|_V^2
 \le \frac{C}{\nu^3} \|\vv_L\|^2 \|\ovz\|^4_{\lL^4(\S)} + \frac{2}{\nu} \|F(t)\|^2_{V^\prime},
 \quad t\in[0,\infty).
\ee
Next by using the Gronwall lemma, we obtain
\beas
\|\vv_L(t)\|^2 &\le & \|\vv_L(0)\|^2
 \exp\left( \frac{C}{\nu^3}\int_0^t \|\ovz(\tau)\|^4_{\lL^4(\S)} d\tau \right) \\
&&+ \int_0^t \frac{2}{\nu}\|F(s)\|^2_{V^\prime}
\exp\left(\frac{C}{\nu^3}\int_{s}^t \|\ovz(\tau)\|^4_{\lL^4(\S)} d\tau\right) ds,
t \in [0,\infty).
\eeas

Let us fix $T>0$. Denoting
\[
 \Psi_T(\ovz) = \exp\left( \frac{C}{\nu^3}\int_0^T \|\ovz(\tau)\|^4_{\lL^4(\S)} d\tau \right)
  < \infty, \quad
 C_F = \int_0^T \frac{2}{\nu}\|F(s)\|^2_{V^\prime},
\]
we find that
\[
  \|\vv_L(t)\|^2 \le \|\vv_L(0)\|^2 \Psi_T(\ovz) + C_F
                 \le \|\vv(0)\|^2 \Psi_T(\ovz) + C_F < \infty,
                 \quad  t \in [0,T).
\]
Therefore,
\be\label{equ:supT}
 \sup_{t \in [0,T)} \|\vv_L(t)\|^2 \le \Psi_T(\ovz)\|\vv(0)\|^2 + C_F,
\ee
which implies that
\be\lb{concl:bounded}
 \mbox{the sequence } \{\vv_L\} \mbox{ is bounded in } L^\infty(0,T;H).
\ee
Next we integrate equation \eqref{equ:odevL} from $0$ to $T$ and
then using \eqref{equ:supT} we obtain
\beas
  \|\vv_L(T)\|^2 &+& \nu\int_0^T\|\vv_L(t)\|_V^2 dt \\
 &\le& \frac{C}{\nu^3} \int_0^T \|\ovz(t)\|^4_{\lL^4(\S)} \|\vv_L(t)\|^2 dt
       + \frac{2}{\nu} \int_0^T \|F(t)\|^2_{V^\prime} dt + \|\vv_L(0)\|^2 \\
   &\le& \frac{C}{\nu^3} (\Psi_T(\ovz)\|\vv_L(0)\|^2+C_F) \int_0^T \|\ovz(t)\|^4_{\lL^4(\S)} dt
      + \frac{2}{\nu} \int_0^T \|F(t)\|^2_{V^\prime} dt + \|\vv(0)\|^2
\eeas
The last inequality implies that
\be\lb{concl:L2bounded}
\mbox{the sequence } \{\vv_L\} \mbox{ is bounded in } L^2(0,T;V).
\ee
The claims \eqref{concl:bounded} and \eqref{concl:L2bounded} are sufficient to infer
that the sequence $\{\vv_L\}$ has a  subsequence that  converges weakly in $L^2(0,T;V)$ and weakly$^*$ in $L^\infty(0,T;H)$.
However, in order to show
that the limit function $\vv$ is a solution to our problem, we need to show that $\vv_L$
converges to $\vv$ in the strong topology of $L^2(0,T;H)$.

Let $\tvv_L = 1_{(0,T)}\vv_L$ and let the Fourier transform (in the time variable)
of $\tvv_L$ is denoted by $\hat{\vv}_L$. We want to show that
\[
 \int_{\R} |\tau|^{2\gamma} \| \hat{\vv}_L(\tau) \|^2 dt < \infty \mbox{ for some } \gamma > 0.
\]

We observe that \eqref{equ:Galerkin} can be written as
\bea
\frac{d}{dt} \tvv_L &=& \tvf_L +
       \vv_{L}(0) \delta_0 - \vv_L(T)\delta_T,\label{equ:ode0T}
       \eea

\dela{\bea
\frac{d}{dt} (\tvv_L,\Z_{\ell,m}) &=& \inprod{\tvf_L}{ \Z_{\ell,m}} +
       (\vv_{L}(0), \Z_{\ell,m}) \delta_0 - (\vv_L(T), \Z_{\ell,m})\delta_T,\label{equ:ode0T} \\
       && \quad \ell=1,\ldots,L; |m| \le L, \nonumber
\eea}
where $\delta_0,\delta_T$ are Dirac distributions at $0$ and $T$ and
\beas
  \vf_L &=& F - \nu \A \vv_L - \B\vv_L - \B(\vv_L,\ovz) -\B(\ovz,\vv_L) - \CC \vv_L \\
  \tvf_L &=& 1_{[0,T]} \vf_L
\eeas
\noindent

Applying the Fourier transform (with respect to the time variable $t$) to \eqref{equ:ode0T}
we obtain
\[
 2 i \pi \tau \hat{\vv}_L(\tau) = \hat{\vf}_L (\tau)+
  \vv_L(0) - \vv_L(T) \exp(-2i \pi T \tau), \;\;\; \tau \in  \mathbb{R},
\]
where $\hat{\vv}_L$ and $\hat{\vf}_L$ are the Fourier transforms of $\tvu_L$ and
$\tvf_L$ respectively. By multiplying this equation with
the Fourier transform of $\vv_L$ we obtain
\be\lb{equ:FT vv}
  2 i \pi \tau \| \hat{\vv}_L (\tau) \|^2 =
  \langle\hat{\vf}_L(\tau) , \hat{\vv}_L(\tau)\rangle
  + (\vv_L(0),\hat{\vv}_L(\tau)) - (\vv_L(T),\hat{\vv}_L(\tau))\exp(-2i \pi T\tau).
\ee
From the Parseval equality and \eqref{skew} and \eqref{Cuu}, we have
\[
\langle \hat{\vf}_L(\tau) , \hat{\vv}_L(\tau) \rangle
= \langle \vf_L, \vv_L \rangle
= (F,\vv_L) - \nu (\A\vv_L, \vv_L) - b(\vv_L,\ovz,\vv_L)
\]
Therefore, by using the Cauchy-Schwarz inequality and \eqref{est1}, we have
\be
|(\vf_L,\vv_L)|
\le \|F\|_{V^\prime} \|\vv_L\|_V
+ \nu \|\vv_L\|^2_V + \frac{C}{\nu^3} \|\vv_L\|^2 \|\ovz\|^4_{\lL^4(\S)}
        + \frac{\nu}{4} \|\vv_L\|^2_V
\ee
Taking into account the fact that  $\|\vv_L\|$ is bounded  (cf.\eqref{equ:supT}),
we conclude
\[
\int_0^{T} \|\vf_L\|_{V'} dt
\le \int_0^T \left(\|F\|_{V^\prime} + \frac{3\nu}{4} \|\vv_L\|_{V}
            + \frac{C_1}{\nu^3} \|\ovz\|^4_{\lL^4(\S)} \right) dt,
\]
and this remains bounded since $F \in L^2(0,T;V')$,
$\ovz \in L^4_{\loc}([0,\infty);\lL^4(\S))$ and $\vv_L$ remains
in a bounded set of $L^2(0,T;V)$ (cf.\eqref{equ:supT}).
Therefore,
\be\label{equ:boundfL}
 \sup_{\tau \in \R} \|\hat{\vf}_L(\tau)\|_{V^\prime} \le C, \quad\forall L.
\ee

Due to \eqref{equ:supT}, $\|\vv_L(0)\| < C$, $\|\vv_L(T)\| < C$, and we deduce
from \eqref{equ:FT vv} and \eqref{equ:boundfL} that
\be\label{equ:est tau}
|\tau| \| \hat{\vv}_L \|^2
\le c_2 \|\hat{\vv}_L\|_{V} + c_3 \|\hat{\vv}_L\| \le c_4 \| \hat{\vv}_L \|_{V}.
\ee



For $\gamma$ fixed, $\gamma < 1/4$, we observe that
$|\tau|^{2\gamma} \le C(\gamma) (1+|\gamma|)/(1+|\tau|^{1-2\gamma})$ for all $\tau \in \R$
and hence
\begin{eqnarray}
\int_{\R} |\tau|^{2\gamma} \|\hat{\vv}_L(\tau)\|^2 d\tau
&\le& C(\gamma) \int_{\R} \frac{1 + |\tau|} {1+ |\tau|^{1-2\gamma} }
 \|\hat{\vv}_L(\tau)\|^2 d\tau \nonumber \\
&\le& c_5 \int_{\R} \frac{|\tau|\|\hat{\vv}_L(\tau)\|^2 d\tau }{1+ |\tau|^{1-2\gamma}}
     + c_6 \int_{\R} \|\hat{\vv}_L(\tau)\|^2_V d\tau  \label{equ:taugamma}
\end{eqnarray}
The last integral of \eqref{equ:taugamma} is bounded by Parseval identity
and by \eqref{concl:L2bounded}. We bound the first integral by
\eqref{equ:est tau},
\bea
\int_{\R} \frac{|\tau|\|\hat{\vv}_L(\tau)\|^2 d\tau }{1+ |\tau|^{1-2\gamma}}
 &\le & c_4  \int_{\R}
      \frac{ \| \hat{\vv}_L(\tau) \|_V d\tau } {1 + |\tau|^{1-2\gamma}} \nonumber \\
&\le& c_4 \left( \int_{\R} \frac{d\tau}{(1+|\tau|^{1-2\gamma})^2}\right)^{1/2}
      \left( \int_0^T \|\vv_L(t)\|^2_{V} dt \right)^{1/2}
\label{equ:tau hat}
\eea
where we have used the Parseval equality and the Cauchy-Schwarz inequality
in the last step. The first integral in \eqref{equ:tau hat} are finite since
$\gamma < 1/4$ and the second integral is bounded
as $L \rightarrow \infty$ by \eqref{concl:L2bounded}.

Hence, we have shown that
\be\lb{concl:Hgamma}
 \tvv_L \mbox{ belongs to a bounded set of } \calH^{\gamma,2}(\R;V,H)
\ee
and this will enable us to apply the compactness result of Theorem~\ref{thm:compact}.

Because of \eqref{concl:bounded} and \eqref{concl:L2bounded}, without loss of
generality we may assume that there exists
$\vv \in L^2(0,T;V) \cap L^\infty(0,T;H)$ such that
\be\lb{concl:weak conv}
\begin{cases}
 \vv_L & \rightarrow \vv,  \mbox{  weakly in } L^2(0,T;V),\\
 \vv_L & \rightarrow \vv,  \mbox{  weakly-star in } L^\infty(0,T; H).
\end{cases}
\ee

Since the sphere $\S$ is bounded, the embedding $\hH^1(\S) \hookrightarrow \lL^2(\S)$ is compact,
and since \eqref{concl:Hgamma}
\be\lb{concl:bounded gamma}
 \tvv_L \mbox{ is bounded in } \calH^\gamma(0,T;\hH^1(\S), \lL^2(\S)).
\ee
By Theorem~\ref{thm:compact} the imbedding
$\calH^\gamma(0,T;\hH^1(\S), \lL^2(\S)) \hookrightarrow L^2(0,T;\lL^2(\S))$ is compact, we may
deduce from \eqref{concl:bounded gamma} that we can find a subsequence $\{\vv_{L}\}$
(which is denoted as the whole sequence for sake of simplicity of notation)
such that
\be\lb{equ:vLtov}
 \vv_L \rightarrow \vv \quad \mbox{ strongly in } L^2(0,T;\lL^2(\S)).
\ee

It remains to show that $\vv \in C([0,T];H)$ and that $\vv$ is a solution to problem
\eqref{equ:ode}. To prove the latter we take a continuously differentiable
function $\psi:[0,T] \rightarrow \R$ such that $\psi(T) = 0$.
Then by taking the inner product of \eqref{equ:Galerkin} with
$\psi(t) \vphi$ where $\vphi \in H_\ell$ for some $\ell \in \N^{+}$ then
integrating by parts, we get
\bea
&& -\int_0^T (\vv_L(t),\psi'(t)\vphi) dt = - \nu \int_0^T (P_L\A\vv_L(t),\psi(t)\vphi) dt  \nonumber\\
&& + \int_0^T (P_L \B(\vv_L(t)), \psi(t)\vphi) dt + \int_0^T (P_L \B(\vv_L(t),\ovz), \psi(t)\vphi) dt \label{intpsi}\\
&& + \int_0^T (P_L \B(\ovz,\vv_L(t)), \psi(t)\vphi) dt + \int_0^T \inprod{P_L F(t)}{ \psi(t)\vphi} dt
   + (\vv_L(0),\psi(0)\vphi). \nonumber
\eea

We aim to take the limit of \eqref{intpsi} when $L \rightarrow \infty$.
Since $\psi(\cdot)\vphi \in L^2(0,T;\lL^2(\S))$ and \eqref{equ:vLtov}
we can use the Cauchy-Schwartz inequality to infer that
\[
  \int_0^T (\vv_L(t) - \vv(t), \psi'(t)\vphi) \rightarrow 0.
\]
Therefore, the left-hand side of \eqref{intpsi} converges to
$-\int_0^T (\vv(t),\psi'(t)\vphi) dt$.

Next let us take $\ell \le L$ so that $H_\ell \subset H_L$ and $P_L\vphi = \vphi$.
For the first term on the right-hand side of \eqref{intpsi}, we observe that
\beas
 \int_0^T (P_L \A \vv_L(t),\psi(t)\vphi) dt
 &=&\int_0^T (\A \vv_L(t),\psi(t) P_L\vphi) dt \\
 &=&\int_0^T (\A \vv_L(t),\psi(t) \vphi) dt
 = \int_0^T (\vv_L(t),\psi(t) \vphi)_V dt
\eeas
Since $\psi(\cdot)\vphi \in L^2(0,T;V)$, it follows from \eqref{concl:weak conv}
that, as $L \rightarrow \infty$,
\[
 \int_0^T (P_L \vv_L(t), \psi(t)\vphi)dt -
 \int_0^T (\vv(t),\psi(t)\vphi)_V = \int_0^T (\vv_L(t)-\vv(t),\psi(t)\vphi)_V dt
  \rightarrow 0.
\]
For the second term in the right-hand side of \eqref{intpsi}, we need to prove the
following lemma
\begin{lemma}\label{lem:bvmvm}
Suppose $\vu:[0,T] \times \S \rightarrow \R^2$ is a $C^1$ function and all
first derivatives of components of $\vu$ are bounded on $\S \times [0,T]$.
Suppose
$\vv_m \rightarrow \vv$ weakly in $L^2(0,T;V)$ and strongly in
$L^2(0,T;\lL^2(\S))$. Then
\[
\int_0^T b(\vv_m(t),\vv_m(t),\vu(t))dt
\rightarrow \int_0^T b(\vv(t),\vv(t),\vu(t)) dt
\]
\end{lemma}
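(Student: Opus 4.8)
The plan is to pass to the limit term by term in the bilinear form, using the skew-symmetry relation \eqref{skew} to rewrite $b(\vv_m,\vv_m,\vu)$ in a form where the nonlinearity is split off from a derivative. First I would use \eqref{skew} to write
\[
 b(\vv_m,\vv_m,\vu) = -b(\vv_m,\vu,\vv_m),
\]
so that it suffices to prove
\[
 \int_0^T b(\vv_m(t),\vu(t),\vv_m(t))\,dt \rightarrow \int_0^T b(\vv(t),\vu(t),\vv(t))\,dt.
\]
The point of this reformulation is that $\vu$, being $C^1$ with bounded first derivatives, is a benign "coefficient", while the two slots carrying the weakly/strongly convergent sequence $\vv_m$ no longer involve any derivative of $\vv_m$. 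Writing the difference as
\[
 b(\vv_m,\vu,\vv_m) - b(\vv,\vu,\vv) = b(\vv_m - \vv,\vu,\vv_m) + b(\vv,\vu,\vv_m - \vv),
\]
I would estimate each piece using the extension-to-the-shell representation \eqref{btilde} together with the bound \eqref{b_estimate1}, which gives
\[
 |b(\vw_1,\vu,\vw_2)| \le C\|\vw_1\|\,\|\vw_2\|\,(\|\Curln\vu\|_{\lL^\infty(\S)} + \|\vu\|_{\lL^\infty(\S)}).
\]
Since all first derivatives of $\vu$ are bounded on $\S\times[0,T]$, the quantity $\|\Curln\vu(t)\|_{\lL^\infty(\S)} + \|\vu(t)\|_{\lL^\infty(\S)}$ is bounded by a constant $M$ uniformly in $t\in[0,T]$.

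Next I would carry out the limit. For the first piece,
\[
 \Big|\int_0^T b(\vv_m(t)-\vv(t),\vu(t),\vv_m(t))\,dt\Big|
 \le C M \int_0^T \|\vv_m(t)-\vv(t)\|\,\|\vv_m(t)\|\,dt
 \le C M \|\vv_m - \vv\|_{L^2(0,T;\lL^2)}\,\|\vv_m\|_{L^2(0,T;\lL^2)}
\]
by Cauchy--Schwarz in time; the first factor tends to $0$ by the assumed strong convergence in $L^2(0,T;\lL^2(\S))$ and the second is bounded (weakly convergent sequences in $L^2(0,T;V)$ are bounded, and $V\hookrightarrow \lL^2$). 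For the second piece,
\[
 \int_0^T b(\vv(t),\vu(t),\vv_m(t)-\vv(t))\,dt,
\]
the same inequality gives a bound $CM\|\vv\|_{L^2(0,T;\lL^2)}\,\|\vv_m - \vv\|_{L^2(0,T;\lL^2)}\to 0$. Hence both terms vanish in the limit, which yields the claim after multiplying through by the sign change from \eqref{skew}.

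The main obstacle is purely technical: the estimate \eqref{b_estimate1} and the shell-extension identity \eqref{btilde} are stated for smooth (or at least $V$) vector fields, so strictly speaking one should first check that $b(\vw_1,\vu,\vw_2)$ extends by density/continuity to $\vw_1,\vw_2\in \lL^2(\S)\cap H$ when $\vu$ is $C^1$ with bounded derivatives — i.e. that the right-hand side of \eqref{b_estimate1} only involves the $\lL^2$ norms of $\vw_1,\vw_2$, which it does. Once this continuity is in hand the argument is exactly the linear-in-each-$\vv_m$-slot interpolation above, and no control on derivatives of $\vv_m$ (only its $L^2(0,T;\lL^2)$ strong convergence and $L^2(0,T;V)$ boundedness) is needed. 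One should also note that measurability in $t$ and the finiteness of the time integrals follow from $\vv_m,\vv\in L^2(0,T;V)\subset L^2(0,T;\lL^2)$ together with the uniform-in-$t$ bound $M$ on $\vu$, so all integrals above are well defined.
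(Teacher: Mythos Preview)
Your proof is correct and follows essentially the same approach as the paper: use \eqref{skew} to move the derivative onto $\vu$, decompose the difference bilinearly, and apply \eqref{b_estimate1} together with strong $L^2(0,T;\lL^2)$ convergence and $L^2$-in-time boundedness of $\vv_m$. The paper's decomposition is $b(\vv_m,\vu,\vv_m-\vv)+b(\vv_m-\vv,\vu,\vv)$ rather than your $b(\vv_m-\vv,\vu,\vv_m)+b(\vv,\vu,\vv_m-\vv)$, but the two are interchangeable and the estimates are identical.
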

\begin{proof}
By \eqref{skew} we have
$
b(\vv_m,\vv_m,\vu) = -b(\vv_m,\vu,\vv_m).
$
We also have
\[
 b(\vv_m,\vu,\vv_m) - b(\vv,\vu,\vv) =
 b(\vv_m,\vu,\vv_m-\vv) + b(\vv_m-\vv,\vu,\vv).
\]
Using \eqref{b_estimate1}, we have
\[
|b(\vv_m,\vu,\vv_m-\vv)| \le C \|\vv_m\| \|\vv_m-\vv\| ( \|\Curln\vu\|_{\lL^\infty(\S)} + \|\vu\|_{\lL^\infty(\S)}).
\]
In view of \eqref{concl:L2bounded}), and $\vv_m \rightarrow \vv$ strongly
in $L^2(0,T;H)$ we conclude that
\[
  \int_0^T b(\vv_m,\vu,\vv_m-\vv) dt \rightarrow 0.
\]
Similarly, we can show that
\[
\int_0^T b(\vv_m-\vv,\vu,\vv) dt \rightarrow 0.
\]
Hence the lemma is proved.
\end{proof}
\begin{corollary}\label{cor:bvmvmL4}
Suppose $\{\vv_m\}$ is bounded in $L^\infty(0,T;H)$, $\vv \in L^\infty(0,T;H)$, and $\vv_m \rightarrow \vv$
weakly in $L^2(0,T;V)$ and strongly in $L^2(0,T;\lL^2_{\loc}(\S))$. Then for any $\vw \in L^4(0,T;\lL^4(\S))$,
\[
 \int_0^T b(\vv_m(t),\vv_m(t),\vw(t)) dt \rightarrow \int_0^T b(\vv(t),\vv(t),\vw(t)) dt.
\]
\end{corollary}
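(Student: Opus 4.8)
The plan is to approximate an arbitrary $\vw \in L^4(0,T;\lL^4(\S))$ by smooth vector fields and then use Lemma~\ref{lem:bvmvm} together with the uniform bound \eqref{equ:Bu L4} and Lemma~\ref{lem:bL4} to control the error. First I would fix $\eps>0$ and choose a smooth (in space and time) vector field $\vw_\eps$, tangential on $\S$ and with bounded first derivatives on $\S\times[0,T]$, such that $\|\vw-\vw_\eps\|_{L^4(0,T;\lL^4(\S))}<\eps$; such approximations exist by density of smooth sections in $L^4$. The key decomposition is, for each $t$,
\[
 b(\vv_m,\vv_m,\vw) - b(\vv,\vv,\vw)
 = \big[b(\vv_m,\vv_m,\vw_\eps) - b(\vv,\vv,\vw_\eps)\big]
   + b(\vv_m,\vv_m,\vw-\vw_\eps) - b(\vv,\vv,\vw-\vw_\eps).
\]
The first bracket, integrated over $[0,T]$, tends to $0$ as $m\to\infty$ by Lemma~\ref{lem:bvmvm} applied to $\vu=\vw_\eps$, whose hypotheses are exactly met.

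For the two remainder terms I would use the $\lL^4$-estimate \eqref{b_estimate3}: since $\vv_m,\vv\in\hH^1(\S)$ for a.e. $t$ and $\vw-\vw_\eps\in\lL^4(\S)$,
\[
 |b(\vv_m,\vv_m,\vw-\vw_\eps)| \le C\,\|\vv_m\|_{\lL^4(\S)}\,\|\vv_m\|_V\,\|\vw-\vw_\eps\|_{\lL^4(\S)}
 \le C\,\|\vv_m\|^{1/2}\,\|\vv_m\|_V^{3/2}\,\|\vw-\vw_\eps\|_{\lL^4(\S)},
\]
using \eqref{equ:L4}. Integrating in time and applying H\"older with exponents matching the powers $1/2$, $3/2$, $1$ on the three factors (that is, $L^\infty$ for $\|\vv_m\|^{1/2}$, $L^{4/3}$ for $\|\vv_m\|_V^{3/2}$ via $L^2(0,T;V)$, and $L^4$ for $\|\vw-\vw_\eps\|_{\lL^4(\S)}$), I get
\[
 \int_0^T |b(\vv_m,\vv_m,\vw-\vw_\eps)|\,dt
 \le C\,\Big(\sup_{[0,T]}\|\vv_m\|\Big)^{1/2}\Big(\int_0^T\|\vv_m\|_V^2\,dt\Big)^{3/4}\,\|\vw-\vw_\eps\|_{L^4(0,T;\lL^4(\S))}.
\]
By the uniform bounds \eqref{concl:bounded}, \eqref{concl:L2bounded} (for $\vv_m$) and lower semicontinuity of the norms for the weak/weak-$\ast$ limit $\vv$, both $\int_0^T|b(\vv_m,\vv_m,\vw-\vw_\eps)|\,dt$ and $\int_0^T|b(\vv,\vv,\vw-\vw_\eps)|\,dt$ are bounded by $C\eps$ with $C$ independent of $m$ and $\eps$.

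Combining, $\limsup_{m\to\infty}\big|\int_0^T b(\vv_m,\vv_m,\vw)\,dt - \int_0^T b(\vv,\vv,\vw)\,dt\big| \le 2C\eps$, and since $\eps>0$ is arbitrary the corollary follows. The main subtlety is not the estimates, which are routine given \eqref{b_estimate3} and \eqref{equ:L4}, but making sure the smooth approximation $\vw_\eps$ can be taken \emph{tangential} and with bounded first derivatives on the compact manifold $\S$ so that Lemma~\ref{lem:bvmvm} genuinely applies; this is handled by mollifying in local charts with a partition of unity (or, more cleanly, by expanding $\vw$ in the basis $\{\Z_{\ell,m}\}$ and truncating, noting finite linear combinations are smooth tangential fields), after which the density in $L^4(0,T;\lL^4(\S))$ is standard. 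One should also note that the hypothesis "strongly in $L^2(0,T;\lL^2_{\loc}(\S))$" is enough because $\vw_\eps$ is supported on all of the compact $\S$, so no issue at infinity arises; the local statement is only a cosmetic weakening here.
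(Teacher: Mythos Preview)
The paper states this corollary without proof, so there is no explicit argument to compare against; presumably the intended proof is exactly the density argument you give, since the result is labelled a corollary of Lemma~\ref{lem:bvmvm}. Your approach is correct: approximate $\vw$ in $L^4(0,T;\lL^4(\S))$ by smooth tangential fields $\vw_\eps$, apply Lemma~\ref{lem:bvmvm} to the smooth part, and control the remainder via \eqref{b_estimate3} and \eqref{equ:L4} together with the H\"older splitting $L^\infty\cdot L^{4/3}\cdot L^4$ in time. The required uniform bounds on $\vv_m$ in $L^2(0,T;V)$ follow from the weak convergence hypothesis (not from \eqref{concl:bounded}--\eqref{concl:L2bounded}, which pertain to the specific Galerkin sequence; you should cite the Banach--Steinhaus theorem or simply note that weakly convergent sequences are bounded). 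Your remark that truncating the expansion in the basis $\{\Z_{\ell,m}\}$ produces the needed smooth tangential approximants is the cleanest way to verify the hypotheses of Lemma~\ref{lem:bvmvm}, and the observation that ``$\lL^2_{\loc}$'' is vacuous on the compact $\S$ is correct.
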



By applying Lemma~\ref{lem:bvmvm} to the second term on the right-hand side
of \eqref{intpsi} with $u(t,\xh) = \psi(t)\vphi(\xh)$, for $t\in [0,T]$,
$\xh \in \S$, and noting that
$(P_L \B(\vv_L),\psi(t)\vphi)=(\B(\vv_L),\psi P_L \vphi)=
(\B(\vv_L),\psi \vphi) = b(\vv_L,\vv_L,\psi\vphi)$, we obtain the following
convergence:
\beas
\int_0^T (P_L\B(\vv_L(t)),\psi(t)\vphi) dt
&=& \int_0^T b(\vv_L(t),\vv_L(t),\psi(t)\vphi) dt \\
&\rightarrow& \int_0^T b(\vv(t),\vv(t),\psi(t)\vphi) dt.
\eeas

We now consider the third term on the right-hand side of \eqref{intpsi}. Since
\beas
\int_0^T (P_L \B(\vv_L,\ovz),\psi(t)\vphi) dt
 &=& \int_0^T (\B(\vv_L,\ovz),\psi(t) P_L \vphi) dt\\
 &=& \int_0^T (\B(\vv_L,\ovz),\psi(t) \vphi) dt = \int_0^T b(\vv_L,\ovz,\psi(t)\vphi) dt,
\eeas
by using \eqref{skew} and \eqref{b_estimate1} we obtain
\beas
&& \left| \int_0^T (P_L \B(\vv_L,\ovz),\psi(t)\vphi) dt - \int_0^T b(\vv,\ovz,\psi(t)\vphi) dt  \right|\\
 && =  \left| \int_0^T b(\vv_L(t)-\vv(t),\ovz(t),\psi(t)\vphi) dt  \right|
  =  \left|\int_0^T b(\vv_L(t)-\vv(t),\psi(t)\vphi,\ovz) dt \right| \\
 &&\le \int_0^T | b(\vv_L(t)-\vv(t),\psi(t)\vphi,\ovz) | dt \\
 && \le C \int_0^T \| \vv_L(t) - \vv(t)\| \|\ovz\|
 ( \|\psi(t)\Curln \vphi\|_{\lL^\infty(\S)}+ \|\psi(t) \vphi \|_{\lL^\infty(\S)}).
\eeas
Since $\vv_L \rightarrow \vv$ strongly in $L^2(0,T;H)$, and
$\ovz \in L^4([0,T];\lL^4(\S))$ we conclude that the last integral converges
to $0$ as $L \rightarrow \infty$. Therefore,
\[
\int_0^T (P_L \B(\vv_L,\ovz),\psi(t)\vphi) dt
 - \int_0^T b(\vv,\ovz,\psi(t)\vphi) dt \rightarrow 0.
\]

Similarly, we have
\[
\int_0^T (P_L \B(\ovz,\vv_L),\psi(t)\vphi) dt -
\int_0^T b(\ovz,\vv(t),\psi(t)\vphi)dt \rightarrow 0.
\]

As for the fifth term on the right-hand side of \eqref{intpsi} we have
\[
 \int_0^T \inprod{P_L F}{\psi(t)\vphi} dt = \int_0^T \inprod{F}{\psi(t)\vphi} dt.
\]

Hence, by taking $L \rightarrow \infty$ in \eqref{intpsi}, we arrive at
\bea
&& -\int_0^T (\vv(t),\psi'(t)\vphi) dt = - \nu \int_0^T (\A\vv(t),\psi(t)\vphi) dt  \nonumber\\
&& + \int_0^T (\B(\vv(t)), \psi(t)\vphi) dt + \int_0^T (\B(\vv(t),\ovz), \psi(t)\vphi) dt \label{limintpsi}\\
&& + \int_0^T (\B(\ovz,\vv(t)), \psi(t)\vphi) dt + \int_0^T \inprod{F(t)}{ \psi(t)\vphi} dt
   + (\vv_0,\psi(0)\vphi). \nonumber
\eea

Since \eqref{limintpsi} has been proved for any
$\vphi \in \bigcup_{n=1}^\infty H_n$ and the set $\bigcup_{n=1}^\infty H_n$
is dense in $V$, by using a standard continuity argument we can show that
\eqref{limintpsi} holds for any $\vphi \in V$ and any $\psi \in C^1_0([0,T))$. In particular, it is
satisfied for all $\psi \in C^1_0(0,T)$. Hence,
$\vv$ solves problem \eqref{def:soln} and hence it satisfies equation
\eqref{equ:ode}.

Now we will show that $\vv \in C([0,T],H)$. Since $\vv$ solves \eqref{equ:ode},
$\vv \in L^2(0,T;V)$ and $\A:V \rightarrow V^\prime$ is a bounded linear operator,
$\A\vv \in L^2(0,T;V^\prime)$.
Since $\ovz \in L^4_{\loc}([0,\infty);\lL^4(\S)) \cap L^2_{\loc}([0,\infty);V^\prime)$, it follows
from Lemma~\ref{lem:bL4} that all terms $-\B(\ovz)+\alpha \ovz + \vf \in L^2(0,T;V^\prime)$,
$\B(\vv)$,$\B(\vv,\ovz)$,$\B(\ovz,\vv)$ belong to $L^2(0,T;V^\prime)$.
Hence $\partial_t \vv \in L^2(0,T;V^\prime)$. Thus, it follows from Lemma~\ref{lem:uprime} that
$\vv \in C([0,T];H)$.

Next, we will show that  $\vv(0) = \vv_0$. Recall
that $\vv \in L^2(0,T;V) \cap C([0,T];H)$. $\partial_t \vv \in L^2(0,T;V^\prime)$ and
$\vv$ satisfies \eqref{equ:ode}. Let us take an arbitrary function $\vphi \in V$ and
$\psi \in C_0^1([0,T))$ such that $\psi(0)=1$. Multiplying equation \eqref{equ:ode} by
$\psi(t)\vphi$ and then using integration by parts, we obtain

\bea
&& -\int_0^T (\vv(t),\psi'(t)\vphi) dt = - \nu \int_0^T (\A\vv(t),\psi(t)\vphi) dt  \nonumber\\
&& + \int_0^T (\B(\vv(t)), \psi(t)\vphi) dt + \int_0^T (\B(\vv(t),\ovz), \psi(t)\vphi) dt
 \label{limintpsi v0}\\
&& + \int_0^T (\B(\ovz,\vv(t)), \psi(t)\vphi) dt + \int_0^T \inprod{F(t)}{ \psi(t)\vphi} dt
   + (\vv(0),\psi(0)\vphi). \nonumber
\eea

By comparing equation \eqref{limintpsi} to \eqref{limintpsi v0} we infer that
$(\vv_0 - \vv(0),\vphi)\psi(0) = 0$. Since $\psi(0)=1$ we infer that $(\vv_0 - \vv(0),\vphi)=0$,
for all $\vphi \in V$. Hence, since $V$ is dense in $H$, we obtain $\vv(0) = \vv_0$.

\vspace{0.5cm}

\noindent
{\bf Part II. Uniqueness of solutions}
This is based on the proof of the uniqueness of solutions due to Lions-Prodi \cite{LioPro59};
see also Theorem III.3.2 in \cite{Tem79}. Let us assume that $\vv_1$ and $\vv_2$ are two solutions
of \eqref{equ:ode}, and we let $\vw = \vv_1 - \vv_2$. Then, by definition both
$\vv_1$ and $\vv_2$ (and hence $\vw$ as well) belong to $L^2(0,T;V) \cap C([0,T],H)$, and
by the argument above their weak time derivatives belong to $L^2(0,T;V^\prime)$. Moreover, $\vw$ solves
the following:
\be\lb{equ:vecw}
\begin{cases}
  \partial_t \vw  + \nu \A \vw &= -\B(\vw,\ovz)-\B(\ovz,\vw) - \B(\vw,\vv_1) - \B(\vv_2,\vw),  \\
   \vw(0) &= 0.
\end{cases}
\ee
The regularity of $\vw$ allows us to integrate \eqref{equ:vecw} against
$\vw$ and then use Lemma~\ref{lem:uprime} and \eqref{skew} to obtain
\[
\partial_t \|\vw\|^2 + 2\nu \|\vw\|_V^2 = -2b(\vw,\ovz,\vw)-2b(\vw,\vv_1,\vw).
\]
By using \eqref{skew},\eqref{b_estimate3},\eqref{equ:L4}, and then the Young
inequality, we get
\beas
 \partial_t \|\vw\|^2 + 2 \nu \|\vw\|^2_V &\le& C\|\vw\|^{1/2}\|\vw\|^{3/2}_V
      (\|\ovz\|_{\lL^4(\S)} + \|\vv_1\|_{\lL^4(\S)}) \\
     &\le& \frac{3 \nu} {4} \|\vw\|^2_V +
      \frac{C}{\nu^3} \|\vw\|^2(\|\vv_1\|^4_{\lL^4(\S)} + \|\ovz\|^4_{\lL^4(\S)}).
\eeas
Therefore,
\[
 \partial_t \|\vw(t)\|^2 \le \frac{C}{\nu^3} \|\vw\|^2(\|\vv_1\|^4_{\lL^4(\S)} + \|\ovz\|^4_{\lL^4(\S)}) \mbox{ a.e. on } (0,T).
\]
Since $\int_0^T \|\vv_1\|^4_{\lL^4(\S)} + \|\ovz\|^4_{\lL^4(\S)} dt < \infty$ and
$\vw(0) = 0$, by applying the Gronwall Lemma, we infer that $\|\vw(0)\|^2=0$ for all $t \in [0,T]$.
This means that $\vv_1(t) =  \vv_2(t)$ for all $t\in [0,T]$, which proves the
uniqueness of the solution.

\subsection{Proof of Theorem~\ref{thm:limit}}
We introduce the following notations:
\begin{align*}
\vv_n(t) &= \vv(t,\ovz_n), \quad \vv(t) = \vv(t,z), \quad \vy_n(t) = \vv(t,\ovz_n) - \vv(t,\ovz), \quad t \in [0,T], \\
\what{\ovz}_n &= \ovz_n - \ovz, \quad \what{\vf}_n = \vf_n - \vf.
\end{align*}
It is easy to see that $\vy_n(t)$ solves the following initial value problem:
\begin{equation}
\begin{cases}
 \partial_t \vy_n(t) &= \nu\A \vy_n(t) - \B(\vv_n(t)+ \ovz_n(t)) + \B(\vv(t)+\ovz(t)) - \CC \vy_n +
           \alpha \what{\ovz}_n + \what{\vf}_n, \\
 \vy_n(0) &= \vu_{0n} - \vu_0.
\end{cases}
\end{equation}
It follows from Lemma~\ref{lem:uprime} that $\frac{1}{2} \partial_t \|\vy_n(t)\|^2 = (\partial_t \vy_n(t),\vy_n(t))$.
Therefore, noting that $(\CC \vy_n,\vy_n)=0$,
\begin{align*}
\frac{1}{2} \partial_t \|\vy_n(t)\|^2 +& \nu (\A \vy_n,\vy_n) = -b(\vy_n,\vv_n,\vy_n)-b(\vv,\vy_n,\vy_n) \\
      -& b(\what{\ovz}_n,\vv_n,\vy_n) - b(\ovz,\vy_n,\vy_n) - b(\vv_n,\what{\ovz}_n,\vy_n) \\
      -& b(\vy_n,\ovz,\vy_n) - b(\ovz_n,\what{\ovz}_{n},\vy_n) - b(\what{\ovz}_n,\ovz,\vy_n) \\
      +& \alpha(\what{\ovz}_n,\vy_n) + (\what{\vf}_n,\vy_n), \quad t\ge 0.
\end{align*}
By using the Young inequality, in view of inequalities \eqref{equ:L4} and \eqref{b_estimate3}, we infer that
\begin{align*}
b(\vy_n,\vv_n,\vy_n) &\le \|\vy_n\|^2_{\lL^4(\S)} \|\vv_n\|_V
                      \le \|\vy_n\| \|\vy_n\|_V \|\vv_n\|_V  \\
                     &\le \frac{\nu}{20} \|\vy_n\|^2_V + \frac{5}{\nu} \|\vv_n\|_V^2 \|\vy_n\|^2, \\
b(\vv,\vy_n,\vy_n) & \le \|\vv\|_{\lL^4(\S)} \|\vy_n\|_V \|\vy_n\|_{\lL^4(\S)}
                      \le \|\vv\|_{\lL^4(\S)} \|\vy_n\|^{3/2}_V  \|\vy_n\|^{1/2}  \\
                   & \le \frac{\nu}{20} \|\vy_n\|^2_V +
                    \frac{15^3}{4\nu^3} \|\vy_n\|^2 \|\vv\|^4_{\lL^4(\S)} \\
b(\what{\ovz}_n, \vv_n,\vy_n)
             & \le \|\what{\ovz}_n \|_{\lL^4(\S)} \|\vy_n\|_V \|\vv_n\|_{\lL^4(\S)}    \\
             & \le \frac{\nu}{20} \|\vy_n\|^2_V +
                  \frac{5}{\nu} \|\what{\ovz}_n\|^2_{\lL^4(\S)} \|\vv_n\|\|\vv_n\|_V \\
b(\ovz,\vy_n,\vy_n) &\le \|\ovz\|_{\lL^4(\S)} \|\vy_n\|_V \|\vy_n\|_{\lL^4(\S)}
                     \le \|\ovz\|_{\lL^4(\S)} \|\vy_n\|^{3/2}_V  \|\vy_n\|^{1/2}  \\
                    &\le \frac{\nu}{20} \|\vy_n\|^2_V +
                         \frac{15^3}{4\nu^3} \|\vy_n\|^2 \|\ovz\|^4_{\lL^4(\S)}
\end{align*}
\begin{align*}
b(\vv_n,\what{\ovz}_n,\vy_n) & \le \|\vv_n\|_{\lL^4(\S)} \|\vy_n\|_V \|\what{\ovz}_n\|_{\lL^4(\S)} \\
                  & \le\frac{\nu}{20} \|\vy_n\|^2_V +
                   \frac{5}{\nu} \|\vv_n\| \|\vv_n\|_V \|\what{\ovz}_n\|^2_{\lL^4(\S)} \\
b(\vy_n,\ovz,\vy_n) &  \le\|\vy_n\|^2_{\lL^4(S)} \|\ovz\|_V
                       \le \|\vy_n\| \|\vy_n\|_V \|\ovz\|_V  \\
                    & \le \frac{\nu}{20} \|\vy_n\|^2_V + \frac{5}{\nu} \|\ovz\|_V^2 \|\vy_n\|^2,  \\
b(\ovz_n,\what{\ovz}_n,\vy_n) & \le \|\ovz_n\|_{\lL^4(\S)} \|\vy_n\|_V \|\what{\ovz}_n\|_{\lL^4(\S)} \\
                           & \le \frac{\nu}{20} \|\vy_n\|^2_V +
                                 \frac{5}{\nu} \|\ovz_n\|^2_{\lL^4(\S)} \|\what{\ovz}_n\|^2_{\lL^4(\S)} \\
b(\what{\ovz}_n,\ovz,\vy_n) & \le
    \|\what{\ovz}_n\|_{\lL^4(\S)} \|\vy_n\|_V \|\ovz\|_{\lL^4(\S)} \\
      & \le \frac{\nu}{20} \|\vy_n\|^2_V +
        \frac{5}{\nu} \|\ovz\|^2_{\lL^4(\S)} \|\what{\ovz}_n\|^2_{\lL^4(\S)} \\
\alpha(\what{\ovz}_n,\vy_n) &\le \alpha \|\vy_n\|_V \|\what{\ovz}_n\|_{V^\prime}      \\
           & \le \frac{\nu}{20} \|\vy_n\|^2_V
             + \frac{5 \alpha^2}{\nu}\|\what{\ovz}_n\|^2_{V^\prime}, \\
(\what{\vf}_n,\vy_n) &\le \|\vy_n\|_V \|\vf_n\|_{V^\prime} \\
        &\le \frac{\nu}{20} \|\vy_n\|^2_V
             + \frac{5}{\nu}\|\what{\vf}_n\|^2_{V^\prime}.
\end{align*}
Hence we have, weakly on $(0,T)$,
\begin{align*}
\partial_t \|\vy_n\|^2 + \nu \|\vy_n\|^2_V
   &\le \frac{10}{\nu} \|\vv_n\|_V^2 \|\vy_n\|^2
     + \frac{15^3}{2\nu^3} \|\vy_n\|^2 \|\vv\|^4_{\lL^4(\S)} \\
   & +\frac{10}{\nu} \|\what{\ovz}_n\|^2_{\lL^4(\S)} \|\vv_n\|\|\vv_n\|_V
     +\frac{15^3}{2\nu^3} \|\vy_n\|^2 \|\ovz\|^4_{\lL^4(\S)} \\
   & + \frac{10}{\nu} \|\vv_n\| \|\vv_n\|_V \|\what{\ovz}_n\|^2_{\lL^4(\S)}
     + \frac{10}{\nu} \|\ovz\|_V^2 \|\vy_n\|^2 \\
   & +  \frac{10}{\nu} \|\ovz_n\|^2_{\lL^4(\S)} \|\what{\ovz}_n\|^2_{\lL^4(\S)}
     +\frac{10}{\nu} \|\ovz\|^2_{\lL^4(\S)} \|\what{\ovz}_n\|^2_{\lL^4(\S)} \\
   & + \frac{10 \alpha^2}{\nu}\|\what{\ovz}_n\|^2_{V^\prime}
     + \frac{10}{\nu}\|\what{\vf}_n\|^2_{V^\prime}.
\end{align*}
Integrating the above inequality from $0$ to $t$, for $ t\in [0,T]$, we get
\begin{equation}\label{equ:int ineq}
\begin{aligned}
\|\vy_n(t)\|^2 &+\nu \int_0^t \|\vy_n(s)\|_V^2 ds \le \|\vy_n(0)\|^2  \\
&+\frac{10}{\nu} \int_0^t \beta_n(s) ds  + \int_0^t \gamma_n(s)\|\vy_n(s)\|^2 ds,
\quad t \in [0,T],
\end{aligned}
\end{equation}
where
\begin{align*}
\beta_n & = \|\what{\ovz}_n\|^2_{\lL^4(\S)} \|\vv_n\|\|\vv_n\|_V
       + \|\vv_n\| \|\vv_n\|_V \|\what{\ovz}_n\|^2_{\lL^4(\S)}
       +  \|\ovz_n\|^2_{\lL^4(\S)} \|\what{\ovz}_n\|^2_{\lL^4(\S)} \\
      & +   \|\ovz\|^2_{\lL^4(\S)} \|\what{\ovz}_n\|^2_{\lL^4(\S)}
       + \alpha^2\|\what{\ovz}_n\|^2_{V^\prime}
       +\|\what{\vf}_n\|^2_{V^\prime}, \\
\gamma_n & = \frac{10}{\nu} \|\vv_n\|_V^2
          +\frac{15^3}{2\nu^3}  \|\vv\|^4_{\lL^4(\S)}
          +\frac{15^3}{2\nu^3}  \|\ovz\|^4_{\lL^4(\S)}
          + \frac{10}{\nu} \|\ovz\|_V^2.
\end{align*}
Then by the Gronwall inequality,
\[
 \|\vy_n(t)\|^2 \le \left( \|\vy_n(0)\|^2
 + \frac{10}{\nu} \int_0^t \beta_n(s) ds \right) \exp \left(\int_0^t \gamma_n(s) ds \right).
\]
We observe that
\begin{align*}
\int_0^T \beta_n(s) ds &=
\int_0^T [\|\what{\ovz}_n(s)\|^2_{\lL^4(\S)} \|\vv_n(s)\|\|\vv_n(s)\|_V
       + \|\vv_n(s)\| \|\vv_n(s)\|_V \|\what{\ovz}_n(s)\|^2_{\lL^4(\S)} \\
     & +  \|\ovz_n(s)\|^2_{\lL^4(\S)} \|\what{\ovz}_n(s)\|^2_{\lL^4(\S)}
      +   \|\ovz(s)\|^2_{\lL^4(\S)} \|\what{\ovz}_n(s)\|^2_{\lL^4(\S)} \\
     &  + \alpha^2\|\what{\ovz}_n(s)\|^2_{V^\prime}
       +\|\what{\vf}_n(s)\|^2_{V^\prime} ] ds, \\
&\le \big[ 2\|\vv_n\|_{L^\infty(0,T;H)} \|\vv_n\|_{L^2(0,T;V)} + \\
      &  \qquad  \|\ovz_n\|^2_{L^4(0,T;\lL^4)} + \|\ovz\|^2_{L^4(0,T;\lL^4)} \big]
        \|\what{\ovz}_n\|^2_{L^4(0,T;\lL^4)}    \\
     & \qquad + \alpha^2 \|\what{\ovz}_n\|^2_{L^2(0,T;V^\prime)} + \|\what{\vf}_n\|^2_{L^2(0,T;V^\prime)}.
\end{align*}

Therefore, $\int_0^T \beta_n(s) ds \rightarrow 0$, as $ n \rightarrow \infty$.
Since $\|\vy_n(0)\| \rightarrow 0$, as $n \rightarrow \infty$ and for some constant $C < \infty$
and all $n \in \mathbb{N}$,
\begin{align*}
\int_0^T \gamma_n(s) ds &= \int_0^T
  \left( \frac{10}{\nu} \|\vv_n\|_V^2
          +\frac{15^3}{2\nu^3}  \|\vv\|^4_{\lL^4(\S)}
                    +\frac{15^3}{2\nu^3}  \|\ovz\|^4_{\lL^4(\S)}
                              + \frac{10}{\nu} \|\ovz\|_V^2 \right) ds \\
 &\le \frac{10}{\nu} \|\vv_n\|^2_{L^2(0,T;V)} +
      +\frac{15^3}{2\nu^3} \|\vv\|^2_{L^2(0,T;H)} \|\vv\|^2_{L^2(0,T;V)} \\
 &   \quad + \frac{15^3}{2\nu^3}  \|\ovz\|^4_{L^4(0,T;\lL^4)}
      +\frac{10}{\nu}  \|\ovz\|^2_{L^2(0,T;V)} \\
 &\le C,
\end{align*}
we infer that $\vy_n(t) \rightarrow 0$ in $H$ as $n \rightarrow \infty$, uniformly in $ t\in [0,T]$.
In other words,
\[
  \vv(\cdot,\ovz_n) \vu_{0n} \rightarrow \vv(\cdot,\ovz) \vu_0 \quad \mbox{ in } C([0,T];H).
\]
From inequality \eqref{equ:int ineq} we also have
\begin{align*}
\nu \int_0^T \|\vy_n(s)\|^2_V ds &\le \|\vy_n(0)\|^2 +
\frac{10}{\nu} \int_0^T \beta_n(s) ds
+ \int_0^T \gamma_n(s) \|\vy_n(s)\|^2 ds \\
&\le \|\vy_n(0)\|^2 + \frac{10}{\nu} \int_0^T \beta_n(s) ds +
    \sup_{ s\in [0,T] } \|\vy_n(s)\|^2 \int_0^T \gamma_n(s) ds.
\end{align*}
Hence, $\int_0^T \|\vy_n(s)\|_V^2 ds \rightarrow 0$ as $n \rightarrow \infty$ and therefore,
\[
\vv(\cdot,\ovz_n) \vu_{0n} \rightarrow \vv(\cdot,\ovz) \vu_0 \quad \mbox{ in } L^2([0,T];V).
\]
\section{Ornstein-Uhlenbeck processes}\label{OU}
\subsection{Preliminaries}
Let $\gamma_K$ be a standard cylindrical Gaussian measure on a real separable Hilbert space $K$.
Let us recall that for a real separable Banach space $X$, a
bounded linear operator $U:K \rightarrow X$ is said to be $\gamma$-radonifying iff the
cylindrical measure $\nu_U=\gamma_K\circ U^{-1}$ extends (uniquely) to a countably
additive probability measure on the Borel $\sigma$-algebra of $X$.  By $R(K,X)$ we denote
the Banach space of $\gamma$-radonifying operators from $K$
to $X$ with the norm
\[
  \|U\|_{R(K,X)} := \left( \int_X |x|^2_X \nu_U(dx) \right)^{1/2}, \quad U \in R(K,X).
\]

Let $U:H\to H$ be a bounded symmetric operator with the complete orthonormal
system of
eigenfunctions $\left(\mathbf{e}_l\right)\subset \mathbb L^p\left(\S\right)$ and
the corresponding
set of eigenvalues $\left(\lambda_l\right)$. It follows from \cite[Theorem 2.3]{BrzNee03}
that for a self adjoint operator $U \ge cI$ in $H$, where $c>0$, such that $U^{-1}$
is compact, the operator
$U^{-s}:H \rightarrow \lL^{p}(\S)$ is well defined and $\gamma$-radonifying iff
\begin{equation}\label{equ:ref9}
\int_{\S} \left[ \sum_{\ell} \lambda^{-2s}_\ell | \ve_\ell(\x) |^2 \right]^{p/2} dS(\x) < \infty.
\end{equation}

\begin{lemma}\label{lem:radon}
Let $\DDelta$ denote the Laplace--de Rham operator on $\S$. Then the
operator
\begin{equation}
 (-\DDelta)^{-s}: H \rightarrow \lL^4(\S) \text{ is } \gamma-
 \text{radonifying iff } s>1/2.
\end{equation}
\end{lemma}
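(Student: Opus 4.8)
The plan is to invoke the radonifying criterion \eqref{equ:ref9} of \cite[Theorem 2.3]{BrzNee03} with $p=4$ and with $U$ equal to $-\DDelta$ acting in $H$, and then to reduce the resulting integrability condition to the convergence of an explicit series. First I would assemble the spectral data of $-\DDelta$ on $H$. If $Y_{\ell,m}$ is an $L^2$-normalised scalar spherical harmonic and $\Z_{\ell,m}$ the corresponding normalised divergence-free vector field, then $\Z_{\ell,m}$ coincides, up to a unimodular factor, with $\mu_\ell^{-1/2}\Curl Y_{\ell,m}$ where $\mu_\ell=\ell(\ell+1)$ (indeed \eqref{ip_identities1} and \eqref{curln_curl_psi} give $\|\Curl Y_{\ell,m}\|^2=\mu_\ell$), and the identity $\DDelta\Curl v=\Curl\Delta v$ from \eqref{curln_curl_psi} together with $\Delta Y_{\ell,m}=-\mu_\ell Y_{\ell,m}$ yields $-\DDelta\Z_{\ell,m}=\mu_\ell\Z_{\ell,m}$ for $\ell\ge1$, $|m|\le\ell$. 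Since $\{\Z_{\ell,m}\}$ is an orthonormal basis of $H$ and $\mu_\ell\uparrow\infty$ with $\mu_1=2$, the operator $-\DDelta$ is self-adjoint, $\ge 2I$, with compact inverse, so \eqref{equ:ref9} applies and says that $(-\DDelta)^{-s}\colon H\to\lL^4(\S)$ is $\gamma$-radonifying if and only if
\[
\int_{\S}\Big[\sum_{\ell=1}^{\infty}\sum_{m=-\ell}^{\ell}\mu_\ell^{-2s}\,|\Z_{\ell,m}(\x)|^2\Big]^{2}dS(\x)<\infty .
\]

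The crucial ingredient is then an addition theorem: for each fixed $\ell$ the function $\x\mapsto\sum_{m=-\ell}^{\ell}|\Z_{\ell,m}(\x)|^2$ is constant on $\S$, with value $(2\ell+1)/(4\pi)$. I would justify this by noting that $-\DDelta$ commutes with the natural (unitary) action of $SO(3)$ on tangent vector fields, so the eigenspace $E_\ell=\mathrm{span}\{\Z_{\ell,m}:|m|\le\ell\}$ is $SO(3)$-invariant; consequently the pointwise sum $\sum_m|\Z_{\ell,m}(\x)|^2$, which is the diagonal of the reproducing kernel of $E_\ell$ and hence independent of the chosen orthonormal basis, is $SO(3)$-invariant, and therefore constant since $SO(3)$ acts transitively on $\S$. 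Integrating this constant $c_\ell$ over $\S$ and using $\|\Z_{\ell,m}\|=1$ and $\dim E_\ell=2\ell+1$ gives $4\pi c_\ell=2\ell+1$. (Alternatively, using $\Z_{\ell,m}=-\mu_\ell^{-1/2}\,\xh\times\Grad Y_{\ell,m}$ from \eqref{Curl2}, and $|\xh\times\Grad Y_{\ell,m}|=|\Grad Y_{\ell,m}|$, one reduces to the scalar gradient addition theorem $\sum_m|\Grad Y_{\ell,m}(\x)|^2=\mu_\ell(2\ell+1)/(4\pi)$.)

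Plugging the constant into the integral above, the radonifying condition becomes
\[
4\pi\Big(\frac{1}{4\pi}\sum_{\ell=1}^{\infty}\frac{2\ell+1}{\big(\ell(\ell+1)\big)^{2s}}\Big)^{2}<\infty ,
\]
that is, $\sum_{\ell\ge1}(2\ell+1)\big(\ell(\ell+1)\big)^{-2s}<\infty$; since the general term is comparable to $\ell^{1-4s}$ as $\ell\to\infty$, this holds if and only if $1-4s<-1$, i.e. $s>1/2$, which is the assertion. I expect the only step requiring real care to be the constancy of $\sum_m|\Z_{\ell,m}(\x)|^2$ — equivalently, the $SO(3)$-invariance of $E_\ell$ and the dimension count $\dim E_\ell=2\ell+1$ for the vector spherical harmonics; everything after that is the elementary convergence analysis of a Dirichlet-type series.
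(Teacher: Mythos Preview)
Your proof is correct and follows essentially the same route as the paper: apply the criterion \eqref{equ:ref9} with $p=4$, use the addition theorem to see that $\sum_{|m|\le\ell}|\Z_{\ell,m}(\x)|^2=(2\ell+1)/(4\pi)$ is constant in $\x$, and then check convergence of $\sum_\ell (2\ell+1)(\ell(\ell+1))^{-2s}$. The only difference is that the paper simply cites the vector spherical harmonic addition theorem from \cite{VarMosKhe88}, whereas you supply a self-contained $SO(3)$-invariance argument for it; your version is more informative but not a genuinely different method.
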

\begin{proof}
Let us recall that all the distinct eigenvalues of $-\DDelta$ are $\lambda_\ell = \ell(\ell+1)$, $\ell=0,1,\ldots$ and the corresponding eigenfunctions are
given by the divergence free vector spherical harmonics $\vY_{\ell,m}$
for $|m| \le \ell$, $\ell \in {\mathbb N}$
\cite[page 216]{VarMosKhe88}.
Let us recall also the addition theorem for vector spherical harmonics
\cite[formula (81), page 221]{VarMosKhe88}
\[
\sum_{|m|\le \ell} |\vY_{\ell,m}(\x)|^2 = \frac{2\ell+1}{4\pi} P_{\ell}(1),
\;\; \x \in \S,
\]
and the fact that $P_\ell(1)=1$ with $P_\ell$ being the Legendre
polynomial of degree $\ell$. Therefore, \eqref{equ:ref9} yields
\begin{align*}
 \int_{\S} &\left[ \sum_{\ell=0} (\ell(\ell+1))^{-2s}\sum_{|m|\le \ell} |\vY_{\ell,m}|^2 \right]^{4/2} dS \\
 &= \int_{\S} \left[ \sum_{\ell=0} (\ell(\ell+1))^{-2s} \frac{2\ell+1}{4\pi} P_\ell(1) \right]^2 dS < \infty
\end{align*}
if and only if $s>\frac{1}{2}$ and the lemma follows.
\end{proof}
Let $X=\lL^4(\S) \cap H$ denote the Banach space endowed with the norm
\[
  \|x\|_{X} = \|x\|_H + \|x\|_{\lL^4(\S)}.
\]
It can be shown that $X$ is an $M$-type 2 Banach space, see \cite{Brz97} for details.

It follows from Lemma~\ref{lem:radon} that the operator
\begin{equation}\label{remark}
 \fourIdx{}{}{\!-s}{}\A: H \rightarrow \lL^4(\S) \cap H \text{  is } \gamma \text{-radonifying if }
 s> 1/2.
\end{equation}
Let us recall, that the Stokes operator $\A$ generates an analytic $C_0$-semigroup $\{e^{-t \A}\}_{t\ge 0}$ in $X$. While this fact is known, we could not find a direct reference. We provide a brief argument in Theorem \ref{anal} in the Appendix.
Since the Coriolis operator $\CC$ is bounded on $X$ we can define in $X$ an operator
\[
\hat{\A} = \nu \A + \CC,\quad\mathrm{dom}(\hat{\A})=\mathrm{dom}(\A),
\]
with $\nu > 0$.

\dela{\begin{lemma}\label{rad0}
Suppose there exists a separable Hilbert space $K\subset X$ such that
$\hat{\A} X \subset K$ and the operator $\hat{\A}^{-\delta}:K\to X$
is $\gamma$-radonifying for some $\delta >0$. Then
\[
\int_0^\infty\left\|e^{-t\hat{\A}}\right\|^2_{R(K,X)}\, dt<\infty.
\]
\end{lemma}
\begin{proof}
We note that since $\CC$ is bounded we have
\[
e^{-t\hat{\A}}=  e^{-t\A} + \int_0^t e^{-(t-s)\A} \CC e^{-s \hat{\A}}\, ds.
\]
It follows that
\begin{equation}\label{equ:2integrals}
\int_0^\infty \left\|e^{-t\hat{\A}}\right\|^2_{R(K,X)} dt \le 2 \int_{0}^\infty \left\|e^{-t\A}\right\|^2_{R(K,X)}
 + 2 \int_0^\infty \left\|
 \int_0^t e^{-(t-s)\A} \CC e^{-s\hat{\A}} ds\right\|^2_{R(K,X)} dt.
\end{equation}
The semigroup $\left(e^{-t\A}\right)$ is analytic and exponentially stable. Hence, for any $\delta>0$ for which $\fourIdx{}{}{\!\!-\delta}{}\A$ is $\gamma$-radonifying we have
\[e^{-s\A} = \fourIdx{}{}{\!\delta}{}\A e^{-s \A} \fourIdx{}{}{\!\!-\delta}{}\A,\]
and therefore,
\[
 \|e^{-s\A}\|_{R(K,X)} \le \|\A^{\delta} e^{-s \A}\|_{\calL(X,X)} \| \fourIdx{}{}{\!\!-\delta}{}\A\|_{R(K,X)}.
\]
Using again analyticity of the semigroup $\left( e^{-t\A}\right)$ and exponential stability of $\left(e^{-t\A}\right)$ we obtain
\[\|e^{-s\A}\|_{R(K,X)} \le \frac{e^{-\gamma s}}{s^\delta}\| \fourIdx{}{}{\!\!-\delta}{}\A\|_{R(K,X)},\]
and thereby
\begin{equation}\label{equ:sA}
\int_0^\infty  \|e^{-s\A}\|^2_{R(K,X)} ds < \infty.
\end{equation}
Using similar arguments, we find that
\begin{align*}
\int_0^\infty \left\| \int_0^t e^{-(t-s)\A}\CC e^{-s\hat{\A}} ds\right\|^2_{R(K,X)} dt
  &\le \int_0^\infty \int_0^t \|e^{-(t-s)\A}\|^2_{R(K,X)}
  \|\CC e^{-s\hat{\A}}\|^2 ds dt \\
  &\le\left( \int_0^\infty \|e^{-u\A}\|^2_{R(K,X)}du\right)
  \left( \int_0^\infty \|\CC e^{-s\hat{\A}}\|^2 ds\right).
\end{align*}
Since $\CC$ is bounded linear operator, and in view of \eqref{exp}, the last
integral is finite. Hence the lemma is proved.
\end{proof}
}

\begin{proposition}\label{prop-rad}
The operator $\hat{\A}$ with the domain $\mathrm{dom}(\hat{\A})=\mathrm{dom}(\A)$ generates a strongly continuous and analytic semigroup
$\{e^{-t{\hat{\A}}}\}_{t\ge 0}$ in $X$.
Then there exist certain $M\ge 1$ and $\mu>0$
\begin{equation}\label{exp}
\|e^{-t\hat{\A}}\|_{\calL(X,X)} \le M e^{-\mu t},\quad t\ge 0.
\end{equation}
Moreover, for any $\delta>0$ there exists $M_\delta \ge 1$ such that
\begin{equation}\label{exp2}
\|\fourIdx{}{}{\!\delta}{}{\hat{\A}} e^{-t\hat{\A}}\|_{\calL(X,X)} \le
M_\delta t^{-\delta} e^{-\mu t},\quad t > 0.
\end{equation}
\end{proposition}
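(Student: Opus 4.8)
The plan is to deduce all three claims from two inputs already at hand: first, that the Stokes operator $\A$ generates an analytic $C_0$-semigroup on $X=\lL^4(\S)\cap H$ (this is asserted in the excerpt, with a proof promised in Theorem \ref{anal} of the Appendix), and second, that $\hat\A=\nu\A+\CC$ is a bounded perturbation of $\nu\A$ since $\CC={\mathrm P}\CC_1$ is bounded on $H$ and, being multiplication by the smooth bounded function $2\Omega\cos\theta$ followed by a cross product and the Leray projection, is also bounded on $\lL^4(\S)$, hence on $X$. The generation of a strongly continuous analytic semigroup $\{e^{-t\hat\A}\}_{t\ge0}$ on $X$ then follows from the standard bounded-perturbation theorem for analytic semigroups (e.g. Pazy, Ch.~3): adding a bounded operator to the generator of an analytic semigroup yields again the generator of an analytic semigroup, with the same domain $\mathrm{dom}(\hat\A)=\mathrm{dom}(\A)$.

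Next I would establish the exponential decay \eqref{exp}. The cleanest route is to observe that $\nu\A$ is self-adjoint and positive definite on $H$ with spectrum bounded below by $\nu\lambda_1>0$, so $\|e^{-t\nu\A}\|_{\calL(H,H)}\le e^{-\nu\lambda_1 t}$; combined with analytic smoothing this upgrades to exponential decay on $X$ via the embedding $\mathrm{dom}(\A^{\epsilon})\hookrightarrow\lL^4(\S)$ for suitable small $\epsilon>0$ (coming from the Sobolev embedding \eqref{Sobolev_imbedding}) together with the estimate $\|\A^{\epsilon}e^{-t\nu\A}\|_{\calL(H,H)}\le C t^{-\epsilon}e^{-\nu\lambda_1 t/2}$. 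This gives $\|e^{-t\nu\A}\|_{\calL(X,X)}\le M e^{-\mu_0 t}$ for some $\mu_0>0$. To pass from $\nu\A$ to $\hat\A$ one uses the variation-of-constants identity
\[
e^{-t\hat\A}=e^{-t\nu\A}+\int_0^t e^{-(t-s)\nu\A}\,\CC\,e^{-s\hat\A}\,ds,
\]
which shows $\|e^{-t\hat\A}\|_{\calL(X,X)}$ is a priori bounded on compact time intervals (Gronwall), and then, to get the decay rate, either invoke Pazy's theorem that $\int_0^\infty\|e^{-t\hat\A}x\|_X\,dt<\infty$ for all $x$ implies uniform exponential stability, or argue directly with a spectral-mapping/perturbation bound: since $\CC$ is bounded and the unperturbed semigroup decays at rate $\mu_0$, for $\|\CC\|$ possibly large one still gets decay by noting $\CC$ is skew-symmetric in $H$ (by \eqref{Cuu} the quadratic form $(\CC\vu,\vu)$ vanishes), so $e^{-t\hat\A}$ is a contraction semigroup on $H$ in a sense compatible with the $\A$-decay; the energy estimate $\tfrac{d}{dt}\|e^{-t\hat\A}x\|_H^2=-2\nu\|e^{-t\hat\A}x\|_V^2\le -2\nu\lambda_1\|e^{-t\hat\A}x\|_H^2$ gives $H$-exponential decay for free, and interpolation with analyticity transfers it to $X$.

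Finally, \eqref{exp2} is the standard analytic-semigroup moment estimate applied to $\hat\A$: for any $\delta>0$, $\|\hat\A^{\delta}e^{-t\hat\A}\|_{\calL(X,X)}\le C_\delta t^{-\delta}$ for $t$ in a bounded interval (analyticity), and for large $t$ one writes $\hat\A^{\delta}e^{-t\hat\A}=(\hat\A^{\delta}e^{-t\hat\A/2})(e^{-t\hat\A/2})$ and uses \eqref{exp} on the second factor together with the $t$-independent bound $\|\hat\A^{\delta}e^{-s\hat\A}\|\le C_\delta$ for $s\ge1$ on the first; this produces the claimed $M_\delta t^{-\delta}e^{-\mu t}$. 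Here one should note that because $\mathrm{dom}(\hat\A)=\mathrm{dom}(\A)$ and $0\in\rho(\hat\A)$ (by exponential stability), the fractional powers $\hat\A^{\delta}$ are well defined and comparable to $\A^{\delta}$ on their common domain, so there is no circularity.

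The main obstacle I anticipate is the transfer of exponential decay from $H$ to $X=\lL^4(\S)\cap H$: the energy method gives decay effortlessly in the $H$-norm, but upgrading to the $\lL^4$-norm requires either a genuine $\lL^p$-spectral-bound argument for the Stokes semigroup on the sphere (which is why the analyticity-on-$X$ statement had to be relegated to an appendix) or a careful bootstrap using analytic smoothing $\lL^2\to\mathrm{dom}(\A^{\epsilon})\hookrightarrow\lL^4$ that must be checked to be compatible with the decay rate and uniform in $t$. Everything else is a routine application of the bounded-perturbation theorem and standard parabolic-semigroup estimates.
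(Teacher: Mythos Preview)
Your proposal is correct and lands on essentially the same structure as the paper's proof: bounded perturbation of an analytic generator gives analyticity (the paper cites Pazy, Corollary 2.2 on p.~81), the energy identity in $H$ using $(\CC\vu,\vu)=0$ gives $H$-exponential decay together with $\int_0^\infty\|\vu(t)\|_V^2\,dt<\infty$, and \eqref{exp2} is read off from the standard analytic-semigroup estimate (the paper cites Pazy, Theorem 6.13, p.~74).

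The one place where your route and the paper's diverge is precisely the step you flag as the main obstacle, the transfer of decay from $H$ to $X=\lL^4(\S)\cap H$. You propose a fractional-domain Sobolev embedding $\mathrm{dom}(\A^\epsilon)\hookrightarrow\lL^4$ combined with analytic smoothing, first for $\nu\A$ and then pushed through variation of constants to $\hat\A$. The paper instead works directly with $\hat\A$ from the start and uses the Ladyzhenskaya inequality \eqref{equ:L4} as the interpolation tool: from the energy identity one has both $\int_0^\infty\|\vu(t)\|^2\,dt<\infty$ and $\int_0^\infty\|\vu(t)\|_V^2\,dt<\infty$, so by \eqref{equ:L4} and Cauchy--Schwarz
\[
\int_0^\infty\|\vu(t)\|_{\lL^4}^2\,dt\le\Big(\int_0^\infty\|\vu(t)\|^2\,dt\Big)^{1/2}\Big(\int_0^\infty\|\vu(t)\|_V^2\,dt\Big)^{1/2}<\infty,
\]
and then Datko's theorem (Pazy, Theorem 1.1, p.~116) yields \eqref{exp} directly on $X$. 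This is cleaner than your route because it sidesteps entirely the issue you raise about the perturbation potentially spoiling the decay rate when $\|\CC\|$ is not small: skew-symmetry of $\CC$ is used once, at the level of the energy identity for the full semigroup $e^{-t\hat\A}$, and no separate analysis of $e^{-t\nu\A}$ on $X$ is needed.
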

Before we embark with the proof of the above result let us formulate the following quite obvious its consequence.
\begin{corollary}\label{cor-rad0} In the framework of Proposition \ref{prop-rad} let us additionally  assume  that
there exists a separable Hilbert space $K\subset X$ such that
$\hat{\A} X \subset K$ and the operator $\hat{\A}^{-\delta}:K\to X$
is $\gamma$-radonifying for some $\delta >0$. Then
\[
\int_0^\infty\left\|e^{-t\hat{\A}}\right\|^2_{R(K,X)}\, dt<\infty.
\]
\end{corollary}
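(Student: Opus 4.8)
The plan is to deduce Corollary \ref{cor-rad0} directly from the two decay estimates in Proposition \ref{prop-rad}, using exactly the factorization trick that was worked out in the (deleted) proof of the former Lemma \ref{rad0}. Since the statement is only a consequence of the proposition, I would treat Proposition \ref{prop-rad} as already established.

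First I would record the elementary observation that $\gamma$-radonifying operators form an operator ideal: if $T\in R(K,X)$ and $S\in\calL(X,X)$, then $ST\in R(K,X)$ with $\|ST\|_{R(K,X)}\le \|S\|_{\calL(X,X)}\|T\|_{R(K,X)}$. Then, for $t>0$, I would split $e^{-t\hat\A}$ on $K$ as
\[
e^{-t\hat\A}\big|_K = \big(\fourIdx{}{}{\!\delta}{}{\hat\A}\,e^{-t\hat\A}\big)\,\big(\fourIdx{}{}{\!\!-\delta}{}{\hat\A}\big),
\]
where $\fourIdx{}{}{\!\!-\delta}{}{\hat\A}:K\to X$ is $\gamma$-radonifying by hypothesis and $\fourIdx{}{}{\!\delta}{}{\hat\A}\,e^{-t\hat\A}\in\calL(X,X)$ by analyticity of the semigroup. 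This factorization is legitimate because $\hat\A X\subset K$ guarantees that $e^{-t\hat\A}$ maps $K$ into $\mathrm{dom}(\fourIdx{}{}{\!\delta}{}{\hat\A})$ for $t>0$, so the composition makes sense on all of $K$. Applying the ideal property and then estimate \eqref{exp2} gives
\[
\big\|e^{-t\hat\A}\big\|_{R(K,X)} \le \big\|\fourIdx{}{}{\!\delta}{}{\hat\A}\,e^{-t\hat\A}\big\|_{\calL(X,X)}\,\big\|\fourIdx{}{}{\!\!-\delta}{}{\hat\A}\big\|_{R(K,X)} \le M_\delta\,t^{-\delta}e^{-\mu t}\,\big\|\fourIdx{}{}{\!\!-\delta}{}{\hat\A}\big\|_{R(K,X)},\quad t>0.
\]
Squaring and integrating, $\int_0^\infty \big\|e^{-t\hat\A}\big\|_{R(K,X)}^2\,dt \le M_\delta^2\,\big\|\fourIdx{}{}{\!\!-\delta}{}{\hat\A}\big\|_{R(K,X)}^2 \int_0^\infty t^{-2\delta}e^{-2\mu t}\,dt$, and the last integral is finite provided $2\delta<1$ (the exponential factor takes care of the behaviour at $+\infty$).

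The one genuine subtlety — and the step I would expect to need the most care — is the constraint $\delta<1/2$ needed for $\int_0^\infty t^{-2\delta}e^{-2\mu t}\,dt<\infty$, whereas the hypothesis only supplies \emph{some} $\delta>0$ with $\fourIdx{}{}{\!\!-\delta}{}{\hat\A}$ $\gamma$-radonifying. This is harmless: if $\fourIdx{}{}{\!\!-\delta_0}{}{\hat\A}:K\to X$ is $\gamma$-radonifying for some $\delta_0>0$, then so is $\fourIdx{}{}{\!\!-\delta}{}{\hat\A}=\fourIdx{}{}{(\delta_0-\delta)}{}{\hat\A}\cdot\fourIdx{}{}{\!\!-\delta_0}{}{\hat\A}$ for every $\delta\ge\delta_0$ (the extra factor $\fourIdx{}{}{(\delta_0-\delta)}{}{\hat\A}=(\fourIdx{}{}{(\delta-\delta_0)}{}{\hat\A})^{-1}$ is bounded on $X$), while for $\delta\in(0,\delta_0)$ we simply use $\delta_0$ in place of $\delta$ in the argument above. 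So I would begin the proof by replacing $\delta$, if necessary, with $\min(\delta,1/4)$ and noting the claim is unaffected. Apart from this normalization the argument is routine, resting only on analyticity/exponential stability of $\{e^{-t\hat\A}\}$ from Proposition \ref{prop-rad} and the operator-ideal property of $R(K,X)$.
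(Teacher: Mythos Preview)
Your core argument---factor $e^{-t\hat\A}|_K=(\hat\A^\delta e^{-t\hat\A})(\hat\A^{-\delta})$, invoke the operator-ideal property of $R(K,X)$, and apply the decay estimate \eqref{exp2}---is exactly the intended ``obvious consequence'' of Proposition~\ref{prop-rad}, and it is correct.

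The genuine gap is in your treatment of the constraint $\delta<1/2$. The monotonicity you prove goes the wrong way: from $\hat\A^{-\delta_0}\in R(K,X)$ you can only pass to \emph{larger} exponents $\delta\ge\delta_0$ (since $\hat\A^{-(\delta-\delta_0)}$ is bounded), never to smaller ones. So ``replacing $\delta$ by $\min(\delta,1/4)$'' is illegitimate when the given $\delta\ge 1/2$. In fact the Corollary is simply false in that regime: take $X=K=H$ a Hilbert space and $\hat\A$ self-adjoint with simple eigenvalues $\lambda_n=n$; then $\hat\A^{-\delta}$ is Hilbert--Schmidt iff $\delta>1/2$, while $\|e^{-t\hat\A}\|_{HS}^2=\sum_n e^{-2tn}\sim(2t)^{-1}$ as $t\downarrow 0$, so $\int_0^1\|e^{-t\hat\A}\|_{HS}^2\,dt=\infty$. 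Thus no argument can rescue the case $\delta\ge 1/2$; the hypothesis must be read with $\delta\in(0,1/2)$, which is precisely what the paper imposes in Assumption~\ref{ass:radon} (the only place Corollary~\ref{cor-rad0} is used). With that restriction your proof is complete as written.
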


\begin{proof}[Proof of Proposition \ref{prop-rad}]
Since $\CC$ is a bounded linear operator on $X$, Theorem \ref{anal} and Corollary 2.2 on p. 81 of \cite{Paz83} imply that the operator $A$
is a generator of another analytic $C_0$ semigroup on $X$.

Suppose $\vu(t) = e^{-t(\nu\A+\CC)} \vu_0$ for some $\vu_0 \in V\subset X$.
We will show first that
\be\lb{equ:int1}
  \int_{0}^\infty \|\vu(t)\|^2 dt =
  \int_{0}^\infty \|e^{-t (\nu\A + \CC)} \vu_0\|^2_{\lL^2(\S)} < \infty.
\ee
By Lemma~\ref{lem:uprime} and equation \eqref{Cuu} we have
\[
 \frac{1}{2} \frac{d}{dt} \|\vu(t)\|^2 = (\vu'(t),\vu(t))
      = - (\nu \A \vu,\vu) - (\CC \vu, \vu) = -\nu \|\vu\|^2_V.
\]
Hence
\be\lb{equ:ode1}
  \frac{1}{2} \frac{d}{dt} \|\vu(t)\|^2 + \nu \|\vu(t)\|^2_V = 0.
\ee
Since $\|\vu\|^2_V = \|\A^{1/2}\vu\|^2 \ge \lambda_1 \|\vu\|^2$, we obtain
\[
 \frac{1}{2} \frac{d}{dt} \|\vu(t)\|^2 \le  - 2\lambda_1 \nu \|\vu(t)\|^2.
\]
Using the Gronwall inequality, we obtain $\|\vu(t)\|^2 \le e^{-2\lambda_1 \nu t} \|\vu_0\|^2$ for $\vu_0\in\mathbb H^1\left(\S\right)$, hence for all $\vu_0\in X$ by the density argument and  \eqref{equ:int1} follows.

We also deduce from \eqref{equ:ode1} that
\[
  \|\vu(T)\|^2 + 2 \nu \int_{0}^T  \|\vu(t)\|_V^2 dt =  \|\vu(0)\|^2,
\]
and letting $T \rightarrow \infty$ we obtain
\be\lb{equ:int2}
 \int_{0}^\infty  \|\vu(t)\|_V^2  < \infty,
\ee
for all $\vu_0\in H$. Using inequality \eqref{equ:L4}, the Cauchy-Schwarz inequality and inequalities \eqref{equ:int1}--
\eqref{equ:int2} we obtain
\[
 \int_0^\infty \|\vu(t)\|_{\lL^4}^2 dt\le \left(\int_0^\infty \|\vu(t)\|^2_{\lL^2} dt\right)^{1/2}
          \left(\int_0^\infty \|\vu(t)\|^2_{V} dt\right)^{1/2}< \infty.
\]
Using Theorem 1.1 on p. 116 of \cite{Paz83} with $X = \lL^4(\S) \cap H$
we conclude that $\|e^{-tA}\|_{\calL(X,X)} \le Me^{-\mu t}$ for some constants $M \ge 1$ and $\mu >0$.

Using \cite[Theorem 6.13, page 74]{Paz83} with $X = \lL^4(\S) \cap H$ we arrive
at conclusion \eqref{exp2}.
\end{proof}
Let $E$ be the completion of $\fourIdx{}{}{\!\!-\delta}{}\A(X)$
with respect to the image norm
\[\|\vv\|_E = \|\fourIdx{}{}{\!\!-\delta}{}\A \vv\|_X,\quad \vv\in X.\]
 For
$\xi \in (0,1/2)$ we set
\[
  C^\xi_{1/2}(\R,E) := \{\omega \in C(\R,E): \omega(0) = 0,
  \sup_{t,s\in \R} \frac{|\omega(t)-\omega(s)|_E}{|t-s|^\xi(1+|t|+|s|)^{1/2}} < \infty\}.
\]
The space $C^\xi_{1/2}(\R,E)$ equipped with the the norm
\[
 \|\omega\|_{C^\xi_{1/2}(\R,E)} =
 \sup_{t \ne s\in \R} \frac{|\omega(t)-\omega(s)|_E}{|t-s|^\xi(1+|t|+|s|)^{1/2}}
\]
is a nonseparable Banach space. However, the closure of $\{\omega \in C^\infty_0(\R):
\omega(0)=0\}$ in $C^\xi_{1/2}(\R,E)$, denoted by $\Omega(\xi,E)$, is a separable Banach
space.

Let us denote by $C_{1/2}(\R,X)$ the space of all continuous functions
$\omega:\R \rightarrow X$ with $O(1+|t|^{1/2})$ growth condition, i.e. for some $C = C(\omega)>0$,
\be
 |\omega(t)| \le C(1+|t|^{1/2}), \quad t\in \R.
\ee
The space $C_{1/2}(\R,E)$ endowed with the norm
\[
 \|\omega\|_{C_{1/2}(\R,E)} = \sup_{t\in \R} \frac{|\omega(t)|_E}{1+|t|^{1/2}}
\]
is a nonseparable Banach space.

We denote by $\calF$ the Borel $\sigma$-algebra on $\Omega(\xi,E)$. One can show that \cite{Brz96}
for $\xi \in (0,1/2)$, there exists a Borel probability measure $\bbP$ on $\Omega(\xi,E)$ such that
the canonical process $w_t$, $t\in \R$, defined by
\begin{equation}\label{equ:Wiener}
w_t(\omega) := \omega(t), \quad \omega \in \Omega(\xi,E),
\end{equation}
is a two-sided Wiener process such that the Cameron-Martin (or Reproducing Kernel Hilbert) space
of the Gaussian measure $\calL(w_1)$ on E is equal to $K$. For $t \in \R$, let
$\calF_t := \sigma\{w_s: s\le t\}$. Since for each $t \in \R$ the map
$z \circ i_t: E^* \rightarrow L^2(\Omega(\xi,E),\calF_t,\bbP)$, where
$i_t: \Omega(\xi, E) \ni \gamma \mapsto \gamma(t) \in E$, satisfies
$\E|z\circ i_t|^2 = t|z|^2_K$, there exists a unique extension of $z\circ i_t$ to a bounded
linear map $W_t: K \rightarrow L^2(\Omega(\xi,E),\calF_t,\bbP)$. Moreover, the family
$(W_t)_{t\in \R}$ is an $H$-cylindrical Wiener process on a filtered probability space
$(\Omega(\xi,E), ({\mathfrak F})_{t \in \R},\bbP)$ in the sense of e.g. \cite{BrzPes01}.

\subsection{Ornstein-Uhlenbeck process}
The following is our standing assumption.
\begin{assumption}\label{ass:radon}
There exists a separable Hilbert space $K \subset H \cap \lL^4(\S)$ such that for a certain
$\delta \in (0,1/2)$, the operator
\be\lb{radonify}
\fourIdx{}{}{\!\!-\delta}{}\A : K \rightarrow H \cap \lL^4(\S) \mbox{ is }\gamma\mbox{-radonifying.}
\ee
\end{assumption}
It follows from \eqref{remark} that if $K={\mathcal D}(\A^s)$ for some $s>0$, then
Assumption~\ref{ass:radon} is satisfied. See also remark 6.1 in \cite{BrzLi06}.

On the space 
$\Omega(\xi,E)$
we consider a flow $\vartheta = (\vartheta_t)_{t\in \R}$ defined by
\[
  \vartheta_t \omega(\cdot) = \omega(\cdot + t) - \omega(t), \quad \omega\in \Omega(\xi,E), \;\;t\in \R.
\]

For $\xi \in (\delta,1/2)$ and $\tilde{\omega} \in C^\xi_{1/2}(\R,X)$ we define
\be\label{zhat}
 \hat{z}(t) = \hat{z}(\hat{\A};\tilde{\omega})(t) =
 \int_{-\infty}^t \hat{\A}^{1+\delta} e^{-(t-r)\hat{\A}} (\tilde{\omega}(t)-\tilde{\omega}(r))dr, \quad t \in \R.
\ee
By Proposition~\ref{prop-rad}, for each $\delta > 0 $ there exists $C = C(\delta)>0$ such that
\be\label{semigroup}
 \|\fourIdx{}{}{\!\delta}{}{\hat{\A}} e^{-t\hat{\A}} \|_{\calL(X,X)} \le C t^{-\delta} e^{-\mu t}, \quad t  \ge 0.
\ee
 This was an assumption in \cite[Proposition 6.2]{BrzLi06}.
Rewriting that proposition in a slightly more general form we have

\begin{proposition}\label{zhat well def}
For any $\alpha\ge 0$, the operator $-(\hat{\A}+\alpha I)$ is a generator of an analytic semigroup
$\{e^{-t(\hat{\A}+\alpha I)}\}_{t \ge 0}$ in $X$ such that
\[\|\fourIdx{}{}{\!\delta}{}{\hat{\A}} e^{-t(\hat{\A}+\alpha I)} \|_{\calL(X,X)} \le C t^{-\delta} e^{-(\mu+\alpha) t}, \quad t  \ge 0.\]
If $t \in \R$, then $\hat{z}(t)$ defined in \eqref{zhat} is a well-defined element of $X$ and
the mapping $\tilde{\omega} \mapsto \hat{z}(t)$ is continuous from $C^\xi_{1/2}(\R,X)$ to $X$.
Moreover, the map $\hat{z} : C^\xi_{1/2}(\R,X) \rightarrow C_{1/2}(\R,X)$ is well defined,
linear and bounded. In particular, there exists a constant $C < \infty$ such that for
any $\tilde{\omega} \in C^{\xi}_{1/2}(\R,X)$
\be\label{equ:zhatgrow}
  | \hat{z}(\tilde{\omega}) | \le C (1+ |t|^{1/2})\|\tilde{\omega}\|_{C^{1/2}(\R,X)}.
\ee
\end{proposition}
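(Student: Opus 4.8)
The plan is to follow, with only cosmetic changes, the argument of \cite[Proposition~6.2]{BrzLi06}, whose sole analytic ingredient is the smoothing estimate \eqref{semigroup}; in our situation this estimate has already been established in Proposition~\ref{prop-rad}. The first step is to dispose of the shift by $\alpha I$. Since $\alpha I$ is bounded and commutes with $\hat{\A}$ and with each $e^{-t\hat{\A}}$, one has $e^{-t(\hat{\A}+\alpha I)}=e^{-\alpha t}e^{-t\hat{\A}}$, so $-(\hat{\A}+\alpha I)$ again generates an analytic $C_0$-semigroup on $X$, and $\hat{\A}^{\delta}e^{-t(\hat{\A}+\alpha I)}=e^{-\alpha t}\hat{\A}^{\delta}e^{-t\hat{\A}}$; multiplying \eqref{exp2} by $e^{-\alpha t}$ yields the claimed bound $\|\hat{\A}^{\delta}e^{-t(\hat{\A}+\alpha I)}\|_{\calL(X,X)}\le M_\delta t^{-\delta}e^{-(\mu+\alpha)t}$. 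From this point on I would argue for $\hat{\A}$ itself; every statement then transfers to $\hat{\A}+\alpha I$ by replacing $\mu$ with $\mu+\alpha$.

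To show $\hat{z}(t)$ of \eqref{zhat} is a well-defined element of $X$ I would first record that, by \eqref{exp2} (used with the exponents $\delta$ and $1$) and the semigroup property, $\|\hat{\A}^{1+\delta}e^{-s\hat{\A}}\|_{\calL(X,X)}\le C s^{-(1+\delta)}e^{-\mu s}$ for $s>0$. Combining this with the H\"older-type estimate $\|\tilde{\omega}(t)-\tilde{\omega}(r)\|_X\le\|\tilde{\omega}\|_{C^\xi_{1/2}(\R,X)}\,|t-r|^{\xi}(1+|t|+|r|)^{1/2}$ for $\tilde{\omega}\in C^\xi_{1/2}(\R,X)$, substituting $s=t-r$ in \eqref{zhat}, and bounding $(1+|t|+|t-s|)^{1/2}\le C(1+|t|^{1/2})(1+s^{1/2})$ via $|t-s|\le|t|+s$, I would arrive at
\[
\|\hat{z}(t)\|_X\le C\,(1+|t|^{1/2})\,\|\tilde{\omega}\|_{C^\xi_{1/2}(\R,X)}\int_0^\infty s^{\xi-1-\delta}(1+s^{1/2})\,e^{-\mu s}\,ds .
\]
The remaining integral is finite because near $s=0$ the dominant power is $s^{\xi-1-\delta}$, integrable exactly since $\xi>\delta$, while $e^{-\mu s}$ with $\mu>0$ controls the tail. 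This simultaneously gives $\hat{z}(t)\in X$ and the growth bound \eqref{equ:zhatgrow}.

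The mapping properties then follow quickly. For fixed $t$ the map $\tilde{\omega}\mapsto\hat{z}(t)$ is visibly linear, and the displayed bound shows it is bounded from $C^\xi_{1/2}(\R,X)$ into $X$, hence continuous. To see that $t\mapsto\hat{z}(t)$ is continuous I would rewrite $\hat{z}(t)=\int_0^\infty\hat{\A}^{1+\delta}e^{-s\hat{\A}}\bigl(\tilde{\omega}(t)-\tilde{\omega}(t-s)\bigr)\,ds$; for $t,t'$ in a fixed bounded interval the integrand of $\hat{z}(t')-\hat{z}(t)$ tends to $0$ pointwise in $s$ as $t'\to t$ by continuity of $\tilde{\omega}$, and it is dominated, uniformly in such $t'$, by the integrable majorant $C\,s^{\xi-1-\delta}(1+s^{1/2})e^{-\mu s}$ obtained above. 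Dominated convergence then gives $\hat{z}(t')\to\hat{z}(t)$ in $X$, so $\hat{z}(\tilde{\omega})\in C(\R,X)$; together with \eqref{equ:zhatgrow} this places $\hat{z}(\tilde{\omega})$ in $C_{1/2}(\R,X)$ and shows $\hat{z}:C^\xi_{1/2}(\R,X)\to C_{1/2}(\R,X)$ is linear and bounded.

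The main obstacle — indeed essentially the only substantive point — is the singularity of the convolution kernel at $r=t$: the factor $\hat{\A}^{1+\delta}e^{-(t-r)\hat{\A}}$ blows up like $(t-r)^{-(1+\delta)}$, and it is precisely the H\"older gain $|t-r|^{\xi}$ coming from $\tilde{\omega}\in C^\xi_{1/2}$, \emph{together with the standing hypothesis $\xi>\delta$}, that compensates this singularity and makes all the integrals above converge; the exponential factor $e^{-\mu(t-r)}$ then handles the other end $r\to-\infty$. A minor technical care is needed in making the dominating majorant uniform on compact $t$-intervals so that the dominated convergence argument for continuity in $t$ applies, but this is immediate from the explicit form of the bound.
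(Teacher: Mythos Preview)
Your argument is correct and is precisely the approach of \cite[Proposition~6.2]{BrzLi06}, which the paper invokes without reproducing a proof: the key point is that the kernel blow-up $(t-r)^{-(1+\delta)}$ is compensated by the H\"older gain $(t-r)^{\xi}$ under the standing assumption $\xi>\delta$, while $e^{-\mu(t-r)}$ handles $r\to-\infty$. One cosmetic remark: the paper's displayed inequality \eqref{equ:zhatgrow} writes $\|\tilde{\omega}\|_{C^{1/2}(\R,X)}$ on the right, which is a typo for $\|\tilde{\omega}\|_{C^\xi_{1/2}(\R,X)}$; you used the correct norm.
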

The following results are respectively Corollary 6.4, Theorem 6.6 and Corollary 6.8
from \cite{BrzLi06}.
\begin{corollary}
Assume that $A$ is a generator of an analytic semigroup $\{e^{-tA}\}_{t \ge 0}$ such that $A$ has bounded inverse. Then for all $-\infty < a <b<\infty$
and $t \in \R$, for $\tilde{\omega} \in C^\xi_{1/2}(\R,X)$ the map
\[
 \tilde{\omega} \mapsto (\hat{z}(\tilde{\omega})(t),\hat{z}(\tilde{\omega}))
         \in X \times L^4(a,b;X)
\]
is continuous. Moreover, the above result is valid with the space $C^\xi_{1/2}(\R,X)$
being replaced by $\Omega(\xi,X)$.
\end{corollary}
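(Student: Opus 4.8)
The plan is to obtain this corollary as a soft consequence of Proposition~\ref{zhat well def}, which already carries all the hard analytic estimates; the remaining work is purely functional-analytic bookkeeping. Since $\hat{z}$ is a \emph{linear} map, it suffices to prove that each of the two component maps $\tilde{\omega}\mapsto\hat{z}(\tilde{\omega})(t)$ and $\tilde{\omega}\mapsto\hat{z}(\tilde{\omega})$ is bounded (into $X$ and into $L^4(a,b;X)$ respectively); a product of two continuous linear maps into a product of Banach spaces is automatically continuous, which handles the joint statement.

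First I would dispose of the first component. Proposition~\ref{zhat well def} states exactly that, for every fixed $t\in\R$, $\hat{z}(t)=\hat{z}(\hat{\A};\tilde{\omega})(t)$ is a well-defined element of $X$ and that $\tilde{\omega}\mapsto\hat{z}(t)$ is continuous (indeed linear and bounded, with bound proportional to $1+|t|^{1/2}$ via \eqref{equ:zhatgrow}) from $C^\xi_{1/2}(\R,X)$ to $X$. Nothing further is needed here; note that applicability of Proposition~\ref{zhat well def} is precisely guaranteed by the hypothesis that $A$ generates an analytic semigroup with bounded inverse, since this yields the exponentially decaying bound \eqref{semigroup} and hence convergence of the integral defining $\hat{z}$.

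Next I would treat the $L^4(a,b;X)$-component. Proposition~\ref{zhat well def} also asserts that $\hat{z}$ is linear and bounded as a map $C^\xi_{1/2}(\R,X)\to C_{1/2}(\R,X)$; write $\|\hat{z}\|$ for its operator norm. Fix $\tilde{\omega}$. Then $t\mapsto\hat{z}(\tilde{\omega})(t)$ is continuous on $\R$, and on the compact interval $[a,b]$ one has $1+|t|^{1/2}\le M_{a,b}:=1+\max(|a|,|b|)^{1/2}$, whence
\[
 \sup_{t\in[a,b]}\bigl|\hat{z}(\tilde{\omega})(t)\bigr|_X
   \le M_{a,b}\,\|\hat{z}(\tilde{\omega})\|_{C_{1/2}(\R,X)}
   \le M_{a,b}\,\|\hat{z}\|\,\|\tilde{\omega}\|_{C^\xi_{1/2}(\R,X)}.
\]
Since $[a,b]$ has finite Lebesgue measure, a continuous $X$-valued function on $[a,b]$ belongs to $L^4(a,b;X)$ with $\|\cdot\|_{L^4(a,b;X)}\le (b-a)^{1/4}\sup_{[a,b]}|\cdot|_X$; combining these two estimates shows that $\hat{z}:C^\xi_{1/2}(\R,X)\to L^4(a,b;X)$ is linear and bounded, hence continuous. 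Together with the first component this proves continuity of $\tilde{\omega}\mapsto(\hat{z}(\tilde{\omega})(t),\hat{z}(\tilde{\omega}))$ into $X\times L^4(a,b;X)$.

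Finally, for the ``moreover'' assertion I would simply observe that $\Omega(\xi,X)$ is, by construction, a closed linear subspace of $C^\xi_{1/2}(\R,X)$ endowed with the inherited norm, so the restriction of the (already continuous) map above to $\Omega(\xi,X)$ remains continuous. There is no genuine obstacle in this argument; the only points deserving attention are (i) checking that $t\mapsto\hat{z}(\tilde{\omega})(t)$ is measurable and essentially bounded on $[a,b]$ so that its $L^4(a,b;X)$-norm is meaningful, which is immediate from the continuity part of Proposition~\ref{zhat well def}, and (ii) making sure that no hypothesis beyond analyticity plus boundedness of $A^{-1}$ is silently used, which is why I would keep the invocation of Proposition~\ref{zhat well def} explicit rather than repeating its proof.
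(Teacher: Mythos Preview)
Your argument is correct. The paper does not actually supply its own proof of this corollary; it simply records it as Corollary~6.4 of \cite{BrzLi06}. Your derivation---factoring through the bounded linear map $\hat z:C^\xi_{1/2}(\R,X)\to C_{1/2}(\R,X)$ of Proposition~\ref{zhat well def}, then composing with the obviously bounded restriction/inclusion $C_{1/2}(\R,X)\to C([a,b];X)\hookrightarrow L^4(a,b;X)$---is exactly the intended soft deduction, and your handling of the $\Omega(\xi,X)$ clause by restriction to a closed subspace is fine.
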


\begin{theorem}\label{thm:vartheta}
Assume that $A$ is a generator of an analytic semigroup $\{e^{-tA}\}_{t \ge 0}$ such that $A$ has bounded inverse.Then for any
$\omega \in C^{\xi}_{1/2}(\R,X)$,
\[
 \hat{z}(\vartheta_s \omega(t)) = \hat{z}(\omega)(t+s), \quad t,s \in \R.
\]
In particular, for any $\omega \in \Omega$ and all $t,s \in \R$,
$\hat{z}(\vartheta_s \omega)(0) = \hat{z}(\omega)(s)$.
\end{theorem}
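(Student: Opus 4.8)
The plan is to obtain the identity by a change of the integration variable in the defining formula \eqref{zhat}, after first verifying that $\vartheta_s$ maps $C^\xi_{1/2}(\R,X)$ into itself so that both sides are meaningful. First I would record the stability property. Fix $s\in\R$ and $\omega\in C^\xi_{1/2}(\R,X)$. From $\vartheta_s\omega(t)=\omega(t+s)-\omega(s)$ we have $\vartheta_s\omega(0)=0$ and, for all $t,r\in\R$,
\[
\vartheta_s\omega(t)-\vartheta_s\omega(r)=\omega(t+s)-\omega(r+s).
\]
Since $|t-r|=|(t+s)-(r+s)|$ and $1+|t+s|+|r+s|\le(1+2|s|)(1+|t|+|r|)$, this gives at once
\[
\|\vartheta_s\omega\|_{C^\xi_{1/2}(\R,X)}\le(1+2|s|)^{1/2}\,\|\omega\|_{C^\xi_{1/2}(\R,X)}<\infty,
\]
so $\vartheta_s\omega\in C^\xi_{1/2}(\R,X)$ and, by Proposition~\ref{zhat well def}, $\hat{z}(\vartheta_s\omega)(t)$ is a well-defined element of $X$ for each $t\in\R$.

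The main step is then the substitution itself. Using \eqref{zhat} for $\vartheta_s\omega$ together with the displayed identity,
\[
\hat{z}(\vartheta_s\omega)(t)=\int_{-\infty}^{t}\hat{\A}^{1+\delta}e^{-(t-r)\hat{\A}}\bigl(\omega(t+s)-\omega(r+s)\bigr)\,dr .
\]
Putting $r'=r+s$, so that $dr=dr'$, the lower limit stays $-\infty$, the upper limit becomes $t+s$, and $t-r=(t+s)-r'$, we obtain
\[
\hat{z}(\vartheta_s\omega)(t)=\int_{-\infty}^{t+s}\hat{\A}^{1+\delta}e^{-((t+s)-r')\hat{\A}}\bigl(\omega(t+s)-\omega(r')\bigr)\,dr'=\hat{z}(\omega)(t+s),
\]
which is the asserted equality. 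The manipulation of the improper integral is legitimate because the integrand is absolutely integrable: by \eqref{semigroup} one has $\|\hat{\A}^{1+\delta}e^{-u\hat{\A}}\|_{\calL(X,X)}\le Cu^{-1-\delta}e^{-\mu u}$ for $u>0$, while $|\omega(t)-\omega(r)|_X\le\|\omega\|_{C^\xi_{1/2}(\R,X)}|t-r|^\xi(1+|t|+|r|)^{1/2}$ and $\delta<\xi$; this is the very estimate underlying Proposition~\ref{zhat well def}. Taking $t=0$ yields the final claim $\hat{z}(\vartheta_s\omega)(0)=\hat{z}(\omega)(s)$ for $\omega\in\Omega$.

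I do not expect a genuine obstacle: the statement is essentially the translation invariance of the Lebesgue integral combined with the fact that the flow $\vartheta$ is built so that $\vartheta_s\omega(t)-\vartheta_s\omega(r)$ depends on $(t,r)$ only through the shifted pair $(t+s,r+s)$. The only two points requiring a line of justification are the stability of $C^\xi_{1/2}(\R,X)$ under $\vartheta_s$ (the weight comparison above), needed so that both sides of the identity make sense, and the admissibility of the change of variables in the improper integral, which follows from the integrability already established in Proposition~\ref{zhat well def} via \eqref{semigroup}.
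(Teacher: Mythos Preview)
Your argument is correct: the identity is indeed a straightforward consequence of the increment property $\vartheta_s\omega(t)-\vartheta_s\omega(r)=\omega(t+s)-\omega(r+s)$ together with the change of variable $r'=r+s$ in the defining integral \eqref{zhat}, and you have verified the two side conditions (stability of $C^\xi_{1/2}(\R,X)$ under $\vartheta_s$, absolute integrability via \eqref{semigroup} and $\delta<\xi$) that make the manipulation legitimate. The paper itself does not give a proof but simply quotes the result as Theorem~6.6 of \cite{BrzLi06}; your direct computation is the natural argument and is presumably what that reference contains.
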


For $\xi \in C_{1/2}(\R,X)$ we put
\[
  (\tau_s \zeta) = \zeta(t+s), \quad t,s \in \R.
\]
Thus, $\tau_s$ is a linear a bounded map from $C_{1/2}(\R,X)$ into itself. Moreover,
the family $(\tau_s)_{s\in \R}$ is a $C_0$ group on $C_{1/2}(\R,X)$.

Using this notation Theorem~\ref{thm:vartheta} can be rewritten in the following way.
\begin{corollary}\label{cor:flow}
For $s\in \R$, $\tau_s \circ \hat{z} = \hat{z} \circ \vartheta_s$, i.e.
\[
 \tau_s(\hat{z}(\omega)) =
 \hat{z}(\vartheta_s(\omega)), \quad \omega \in C^\xi_{1/2}(\R,X).
\]
\end{corollary}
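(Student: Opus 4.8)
The plan is to read the corollary off directly from Theorem~\ref{thm:vartheta} by unwinding the definitions of the two shift families, since the statement is merely a reformulation of that theorem in operator notation. First I would settle the bookkeeping that makes both sides meaningful. By Proposition~\ref{zhat well def} the map $\hat{z}$ carries $C^\xi_{1/2}(\R,X)$ into $C_{1/2}(\R,X)$, so $\tau_s$ is applicable to $\hat{z}(\omega)$; and the flow map $\vartheta_s$ preserves $C^\xi_{1/2}(\R,X)$. The latter is routine: from $\vartheta_s\omega(\cdot)=\omega(\cdot+s)-\omega(s)$ one has $\vartheta_s\omega(0)=0$ and $\vartheta_s\omega(t)-\vartheta_s\omega(r)=\omega(t+s)-\omega(r+s)$, while the weight satisfies $1+|t+s|+|r+s|\le(1+2|s|)(1+|t|+|r|)$, so the defining seminorm of $\vartheta_s\omega$ is at most $(1+2|s|)^{1/2}$ times that of $\omega$. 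Hence $\hat{z}(\vartheta_s\omega)$ is a well-defined element of $C_{1/2}(\R,X)$.

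Then, with $\omega\in C^\xi_{1/2}(\R,X)$ and $s\in\R$ fixed, I would evaluate at an arbitrary $t\in\R$:
\[
 \big(\tau_s(\hat{z}(\omega))\big)(t)=\hat{z}(\omega)(t+s)=\hat{z}(\vartheta_s\omega)(t),
\]
where the first equality is the definition of $\tau_s$ and the second is exactly the identity $\hat{z}(\vartheta_s\omega)(t)=\hat{z}(\omega)(t+s)$ supplied by Theorem~\ref{thm:vartheta}. Since $t$ was arbitrary, $\tau_s(\hat{z}(\omega))$ and $\hat{z}(\vartheta_s\omega)$ coincide as elements of $C_{1/2}(\R,X)$, which is the asserted identity $\tau_s\circ\hat{z}=\hat{z}\circ\vartheta_s$.

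There is no genuine obstacle here: the content is already carried by Theorem~\ref{thm:vartheta}, and the only point that calls for even a line of checking is the translation-invariance of $C^\xi_{1/2}(\R,X)$ under $\vartheta_s$, which the weight estimate above handles. I would present the argument in three lines accordingly.
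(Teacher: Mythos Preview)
Your proposal is correct and matches the paper's own treatment: the paper presents this corollary without proof, explicitly introducing it as a rewriting of Theorem~\ref{thm:vartheta} in the $\tau_s$ notation. Your pointwise verification $\tau_s(\hat{z}(\omega))(t)=\hat{z}(\omega)(t+s)=\hat{z}(\vartheta_s\omega)(t)$ is exactly the intended one-line deduction, and the extra check that $\vartheta_s$ preserves $C^\xi_{1/2}(\R,X)$ is a welcome bit of bookkeeping the paper leaves implicit.
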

We define
\[
\ovz_\alpha(\omega) := \hat{z}(\hat{\A} + \alpha I;
          (\hat{\A}+\alpha I)^{-\delta}\omega) \in C_{1/2}(\R,X),
\]
i.e. for any $t\ge 0$,
\begin{eqnarray}
  \ovz_\alpha(\omega)(t) &:=&
   \int_{-\infty}^t (\hat{\A}+\alpha I)^{1+\delta}
   e^{-(t-r)(\hat{ \A} + \alpha I)} \label{zalpha} \\
   && [ (\hat{ \A} +\alpha I)^{-\delta} \omega(t) -
   (\hat{ \A} +\alpha I)^{-\delta} \omega(r)   ] dr
   \nonumber
\end{eqnarray}

By the fundamental theorem of calculus, we obtain
\begin{eqnarray*}
\frac{d \ovz_\alpha(t)}{dt} &=& - (\hat{\A} + \alpha I) \int_{-\infty}^t
         (\hat{\A}+\alpha I)^{1+\delta} e^{-(t-r)
         (\hat{ \A} + \alpha I)} \\
&& [ (\hat{ \A}  +\alpha I)^{-\delta} \omega(t) -
(\hat{ \A } +\alpha I)^{-\delta} \omega(r)   ] dr
  + \dot{\omega}(t),
\end{eqnarray*}
where $\dot{\omega}(t) = d\omega(t)/dt$.
Hence $\ovz_\alpha(t)$ is the solution of the following equation
\be\lb{OUequ}
 \frac{d\ovz_\alpha(t)}{dt} + (\hat{\A} + \alpha I) \ovz_\alpha = \dot{\omega}(t), \quad t \in \R.
\ee

It follows from Theorem~\ref{thm:vartheta} that
\begin{equation}\label{equ:zalpha}
\ovz_\alpha(\vartheta_s \omega)(t) = \ovz_\alpha(\omega)(t+s),
\quad \omega \in C^{\xi}_{1/2}(\R,X), \; t,s \in \R.
\end{equation}

Similar to our definition \eqref{equ:Wiener} of the Wiener process $w(t)$, $t \in \R$,
we can view the formula \eqref{zalpha} as a definition of a process $\ovz_\alpha(t)$, $t \in \R$,
on the probability space $(\Omega(\xi,E),\calF,\bbP)$. Equation \eqref{OUequ} suggests that this
process is an Ornstein-Uhlenbeck process.

\begin{proposition}\label{pro:stationary}
The process $\ovz_\alpha(t)$, $t\in \R$, is a stationary Ornstein-Uhlenbeck process.
It is the solution of the equation
\[
 d\ovz_\alpha(t) + (\hat{ \A}  +\alpha I)\ovz_\alpha dt = dw(t), \quad t\in \R,
\]
i.e. for all $t\in \R$, a.s.
\begin{equation}\label{equ:integral}
  \ovz_\alpha(t) = \int_{-\infty}^t e^{-(t-s)(\hat{\A} + \alpha I)} dw(s),
\end{equation}
where the integral is the It\^{o} integral on the $M$-type 2 Banach space $X$ in
the sense of \cite{Brz97}.

In particular, for some constant $C$ depending on $X$,
\begin{align}
 \E|\ovz_\alpha(t)|^2_X &=
  \E \left| \int_{-\infty}^t e^{-(\hat{\A}+\alpha I)(t-s)} dw(s)\right|^2_X \nonumber \\
       &\le C \int_{-\infty}^t \| e^{-(\hat{\A}  + \alpha I)(t-s)}\|^2_{R(K,X)} ds
        \label{equ:Eineq1}\\
       &\le C \int_0^\infty e^{-2\alpha s} \|e^{-s\hat{\A}}\|^2_{R(K,X)} ds.
       \label{equ:Eineq2}
\end{align}
Moreover, $\E|\ovz_\alpha(t)|_X^2$ tends to $0$ as $\alpha \rightarrow \infty$.
\end{proposition}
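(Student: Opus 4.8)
The plan is to establish the three assertions in turn: that $\ovz_\alpha(t)$ coincides almost surely with the stochastic convolution $\int_{-\infty}^t e^{-(t-s)(\hat{\A}+\alpha I)}\,dw(s)$, that this process is stationary, and that its second moment in $X$ tends to $0$ as $\alpha\to\infty$. Throughout I write $\hat{\A}_\alpha=\hat{\A}+\alpha I$.

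\emph{Step 1: the stochastic convolution and the variance bounds.} By Proposition~\ref{zhat well def} the operator $-\hat{\A}_\alpha$ generates an analytic, exponentially stable semigroup on $X$ with $\|\hat{\A}_\alpha^{\,\delta}e^{-t\hat{\A}_\alpha}\|_{\calL(X,X)}\le Ct^{-\delta}e^{-(\mu+\alpha)t}$ for $t>0$. Since $\CC$ is a bounded perturbation, $\hat{\A}^{\,-\delta}\colon K\to X$ is $\gamma$-radonifying by Assumption~\ref{ass:radon}/\eqref{radonify}; hence, factorising $e^{-t\hat{\A}}=(\hat{\A}^{\,\delta}e^{-t\hat{\A}})\circ\hat{\A}^{\,-\delta}$ and using \eqref{exp2},
\[
 \|e^{-t\hat{\A}}\|_{R(K,X)}\le\|\hat{\A}^{\,\delta}e^{-t\hat{\A}}\|_{\calL(X,X)}\,\|\hat{\A}^{\,-\delta}\|_{R(K,X)}\le C\,t^{-\delta}e^{-\mu t},\qquad t>0,
\]
which is square-integrable on $(0,\infty)$ because $2\delta<1$ (this is Corollary~\ref{cor-rad0}). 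Since $e^{-t\hat{\A}_\alpha}=e^{-\alpha t}e^{-t\hat{\A}}$ and the scalar factors out of the radonifying norm, the same integrability holds for $\hat{\A}_\alpha$. Because $X$ is an $M$-type $2$ Banach space, this makes the It\^o integral $\int_{-\infty}^t e^{-(t-s)\hat{\A}_\alpha}\,dw(s)$ a well-defined $X$-valued (Gaussian) random variable in the sense of \cite{Brz97}, and the $M$-type $2$ It\^o inequality gives
\[
 \E\Bigl|\int_{-\infty}^t e^{-(t-s)\hat{\A}_\alpha}\,dw(s)\Bigr|^2_X\le C\int_{-\infty}^t\|e^{-(t-s)\hat{\A}_\alpha}\|^2_{R(K,X)}\,ds,
\]
which after the substitution $u=t-s$ is exactly \eqref{equ:Eineq1}--\eqref{equ:Eineq2}.

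\emph{Step 2: the identification --- the main obstacle.} It remains to verify that the pathwise object $\ovz_\alpha(t)$ of \eqref{zalpha} coincides with $\int_{-\infty}^t e^{-(t-s)\hat{\A}_\alpha}\,dw(s)$. Formally this is an integration by parts in $r$: rewriting $\ovz_\alpha(t)=\int_{-\infty}^t\hat{\A}_\alpha e^{-(t-r)\hat{\A}_\alpha}(\omega(t)-\omega(r))\,dr$ and using $\hat{\A}_\alpha e^{-(t-r)\hat{\A}_\alpha}=\frac{d}{dr}e^{-(t-r)\hat{\A}_\alpha}$, the boundary term at $r=t$ vanishes since $\omega(t)-\omega(t)=0$, and the one at $r\to-\infty$ vanishes because $\|e^{-(t-r)\hat{\A}_\alpha}\|_{\calL(X,X)}$ decays exponentially while $|\omega(t)-\omega(r)|_E=O(|t-r|^\xi(1+|t|+|r|)^{1/2})$; this leaves $\int_{-\infty}^t e^{-(t-r)\hat{\A}_\alpha}\,d\omega(r)$, and ``$d\omega=dw$''. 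Rigorously, this equality between a pathwise convolution and an It\^o convolution is part of the construction of $\hat z$ in \cite{BrzLi06}: Proposition~\ref{zhat well def} gives $\ovz_\alpha\in C_{1/2}(\R,X)$, the fundamental-theorem-of-calculus computation preceding the statement shows that $\ovz_\alpha$ satisfies \eqref{OUequ}, hence the mild identity $\ovz_\alpha(t)=e^{-(t-s)\hat{\A}_\alpha}\ovz_\alpha(s)+\int_s^t e^{-(t-r)\hat{\A}_\alpha}\,dw(r)$ for $s<t$, and letting $s\to-\infty$ the first term goes to $0$ in $L^2(\bbP;X)$ by the growth estimate \eqref{equ:zhatgrow} together with \eqref{exp}; this yields \eqref{equ:integral}. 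Supplying the rigorous interchange of the It\^o integral with the deterministic $r$-integral, i.e.\ a stochastic Fubini argument in the $M$-type $2$ space $X$, is the only step needing genuine care.

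\emph{Step 3: stationarity and the limit $\alpha\to\infty$.} Stationarity follows from the covariance relation \eqref{equ:zalpha}, $\ovz_\alpha(\vartheta_s\omega)(t)=\ovz_\alpha(\omega)(t+s)$, together with the invariance of the two-sided Wiener measure $\bbP$ on $\Omega(\xi,E)$ under the shift flow $(\vartheta_s)_{s\in\R}$; the latter is just the stationarity of the increments of $w$, since $\vartheta_s\omega(\cdot)=\omega(\cdot+s)-\omega(s)$ and a two-sided Wiener process has the same law as its shifted increments, so $\bbP\circ\vartheta_s^{-1}=\bbP$. Consequently, for fixed $s$ the process $t\mapsto\ovz_\alpha(\cdot)(t+s)$ has the same law under $\bbP$ as $t\mapsto\ovz_\alpha(\vartheta_s\,\cdot)(t)$, which in turn has the law of $t\mapsto\ovz_\alpha(\cdot)(t)$; hence all finite-dimensional distributions of $\ovz_\alpha$ are shift-invariant. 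Finally, combining \eqref{equ:Eineq2} with the pointwise bound $\|e^{-t\hat{\A}}\|^2_{R(K,X)}\le Ct^{-2\delta}e^{-2\mu t}$ from Step~1 gives
\[
 \E|\ovz_\alpha(t)|^2_X\le C\int_0^\infty e^{-2\alpha s}s^{-2\delta}e^{-2\mu s}\,ds,
\]
and since $2\delta<1$ the integrand is dominated, uniformly in $\alpha\ge0$, by the integrable function $s\mapsto s^{-2\delta}e^{-2\mu s}$ and tends to $0$ pointwise on $(0,\infty)$ as $\alpha\to\infty$; by dominated convergence $\E|\ovz_\alpha(t)|^2_X\to0$, which completes the proof.
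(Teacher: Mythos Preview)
Your proof is correct and covers all the claims, with essentially the same overall architecture as the paper: stationarity from the flow identity \eqref{equ:zalpha}, the second-moment bounds from the $M$-type~2 It\^o inequality of \cite{Brz97} combined with Corollary~\ref{cor-rad0}, and the limit $\alpha\to\infty$ via dominated convergence.

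The one genuine methodological difference is in your Step~2, the identification of the pathwise object $\ovz_\alpha$ from \eqref{zalpha} with the stochastic convolution \eqref{equ:integral}. The paper simply invokes ``finite-dimensional approximation'': one projects onto the span of finitely many eigenvectors of $\A$, where both the pathwise integral and the It\^o integral reduce to ordinary one-dimensional formulas and the identity is elementary integration by parts, and then passes to the limit using the continuity of $\hat z$ from Proposition~\ref{zhat well def} together with the convergence of the truncated It\^o integrals in $L^2(\bbP;X)$. Your route instead stays in the infinite-dimensional setting, uses the mild identity $\ovz_\alpha(t)=e^{-(t-s)\hat{\A}_\alpha}\ovz_\alpha(s)+\int_s^t e^{-(t-r)\hat{\A}_\alpha}\,dw(r)$, and sends $s\to-\infty$ with \eqref{equ:zhatgrow} and \eqref{exp} killing the first term. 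Both are valid; the paper's finite-dimensional approach sidesteps the stochastic Fubini issue you flagged, while your argument is more self-contained and makes the role of the growth bound \eqref{equ:zhatgrow} explicit. Your explicit dominating function $s\mapsto s^{-2\delta}e^{-2\mu s}$ for the DCT step is also a slight sharpening of the paper's appeal to Corollary~\ref{cor-rad0}.
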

\begin{proof}
Stationarity of the process $\ovz_\alpha$ follows from equation \eqref{equ:zalpha}.
The equality \eqref{equ:integral} follows by finite-dimensional approximation, and
inequality \eqref{equ:Eineq1}--\eqref{equ:Eineq2} follows from \cite{Brz97}.

By \dela{Lemma~\ref{rad0}}Corollary \ref{cor-rad0}
\begin{equation}\label{equ:BBB}
\int_0^\infty\|e^{-s\hat{ \A}}\|^2_{R(K,X)} ds < \infty.
\end{equation}
Hence, using the Cauchy-Schwarz inequality, we conclude that the last integral is finite.

Finally, the last statement follows from \eqref{equ:Eineq2} by applying the Lebesgue
Dominated Convergence Theorem.
\end{proof}

By Proposition~\ref{pro:stationary}, $\ovz_\alpha(t)$, $t\in \R$, is a stationary
and ergodic $X$-valued process, by the Strong Law for Large Numbers
(see Da Prato and Zabczyk \cite{PraZab96} for a similar argument),
\begin{equation}\label{equ:Ezalpha}
 \lim_{t \rightarrow \infty} \frac{1}{t} \int_{-t}^0 |\ovz_\alpha(s)|^2_X ds
  = \E |\ovz_\alpha(0)|^2_X, \quad \mbox{ a.s.}
\end{equation}

Denote by $\Omega_\alpha(\xi,E)$ the set of those $\omega \in \Omega(\xi,E)$ for which
the equality \eqref{equ:Ezalpha} holds true. It follows from Corollary~\ref{cor:flow} that
this set is invariant with respect to the flow $\theta$, i.e. for all $\alpha \ge 0$ and
all $t \in \R$, $\vartheta_t(\Omega_\alpha(\xi,E)) \subset \Omega_\alpha(\xi,E)$. Therefore,
the same is true for a set
\[
 \hat{\Omega}(\xi,E)= \bigcap_{n=0}^\infty \Omega_n(\xi,E).
\]
It follows that as a model for a metric dynamical system we can take either the
quadruple $(\Omega(\xi,E),\calF,\bbP,\vartheta)$ or the quadruple
$(\hat{\Omega}(\xi,E),\hat{\calF},\hat{\bbP},\hat{\vartheta})$, where
$\hat{\calF}$,$\hat{\bbP}$, and $\hat{\vartheta}$ are respectively the natural
restrictions of $\calF,\bbP$ and $\vartheta$ to $\hat{\Omega}(\xi,E)$.
\section{Random dynamical systems generated by the stochastic NSEs on the sphere}
In this section we use standard constructions and basic theory of random dynamical systems as presented in a fundamental monograph \cite{arnold}. For a short summary of facts relevant for this section the reader may consult \cite{langa} or \cite{BrzLi06}.
\par
We fix a parameter $\alpha \ge 0$ that we will
vary in the following sections. We also fix $\xi \in (\delta,1/2)$ and put
$\Omega = \Omega(\xi,E)$.

We define a map $\varphi = \varphi_\alpha : \R_{+} \times \Omega \times H \rightarrow H$ by
\begin{equation}\label{def:varphi}
(t,\omega,\x) \mapsto \vv(t,\ovz_\alpha(\omega))(\x-\ovz_\alpha(\omega)(0)) + \ovz_\alpha(\omega)(t).
\end{equation}
In what follows, we put for simplicity $\ovz= \ovz_\alpha$.

Because $\ovz(\omega) \in C_{1/2}(\R,X)$, $\ovz(\omega)(0)$ is a well-defined element of $H$
and hence $\varphi$ is well defined. Furthermore, we have the main result of this section.
\begin{theorem}
$(\varphi,\vartheta)$ is a random dynamical system.
\end{theorem}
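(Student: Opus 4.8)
The plan is to verify the two defining properties of a (continuous) random dynamical system over the metric dynamical system $(\Omega,\calF,\bbP,\vartheta)$ constructed in Section~\ref{OU}: the cocycle identities $\varphi(0,\omega)=\mathrm{id}_H$ and $\varphi(t+s,\omega)=\varphi(t,\vartheta_s\omega)\circ\varphi(s,\omega)$ for all $s,t\ge 0$ and $\omega\in\Omega$, joint measurability of $\varphi:\R_+\times\Omega\times H\to H$, and continuity of $\x\mapsto\varphi(t,\omega)\x$. First I would note that $\varphi$ is well defined: for each $\omega$ the path $\ovz_\alpha(\omega)$ lies in $C_{1/2}(\R,X)$ with $X=\lL^4(\S)\cap H$, so its restriction to any $[0,T]$ belongs to $L^4(0,T;\lL^4(\S)\cap H)$ and Theorem~\ref{thm:existence} supplies the solution $\vv(\cdot,\ovz_\alpha(\omega))\vv_0\in C([0,\infty);H)$ entering \eqref{def:varphi}. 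The cocycle identity will be deduced from the flow property of the solution operator of \eqref{equ:ode} together with the shift covariance \eqref{equ:zalpha} of $\ovz_\alpha$, and measurability/continuity from Theorem~\ref{thm:limit}. In what follows I abbreviate $\ovz=\ovz_\alpha$.

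The key step is the \emph{flow property} of \eqref{equ:ode}. Fix $s\ge 0$, a driving path $\ovz\in L^4_{\loc}([0,\infty);\lL^4(\S)\cap H)$ and $\vv_0\in H$, and let $\vv(\cdot)=\vv(\cdot,\ovz)\vv_0$. Put $\vw(t):=\vv(t+s)$ and $\ovz^{(s)}(t):=\ovz(t+s)$. Then $\ovz^{(s)}\in L^4_{\loc}([0,\infty);\lL^4(\S)\cap H)$, $\vw\in C([0,\infty);H)\cap L^2_{\loc}([0,\infty);V)$, and substituting $\tau=t+s$ into the weak formulation \eqref{def:soln} — whose data $\nu\A$, $\CC$, $\vf$ are time-independent — shows that $\vw$ solves \eqref{equ:ode} with $\ovz$ replaced by $\ovz^{(s)}$ and $\vw(0)=\vv(s)$. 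The uniqueness assertion of Theorem~\ref{thm:existence} then yields
\[
 \vv(t+s,\ovz)\vv_0=\vv\big(t,\ovz^{(s)}\big)\big(\vv(s,\ovz)\vv_0\big),\qquad t\ge 0 .
\]

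Next I would assemble the cocycle relation. Fix $\omega\in\Omega$. By \eqref{equ:zalpha} the shifted path $\ovz(\omega)(\cdot+s)$ coincides with $\ovz(\vartheta_s\omega)(\cdot)$, and, taking $t=0$ there, $\ovz(\vartheta_s\omega)(0)=\ovz(\omega)(s)$. Applying the displayed flow property with driving path $\ovz(\omega)$ and initial datum $\vv_0=\x-\ovz(\omega)(0)$,
\[
 \vv\big(t+s,\ovz(\omega)\big)\big(\x-\ovz(\omega)(0)\big)=\vv\big(t,\ovz(\vartheta_s\omega)\big)\Big(\vv\big(s,\ovz(\omega)\big)\big(\x-\ovz(\omega)(0)\big)\Big).
\]
Since $\vv\big(s,\ovz(\omega)\big)\big(\x-\ovz(\omega)(0)\big)=\varphi(s,\omega)\x-\ovz(\omega)(s)=\varphi(s,\omega)\x-\ovz(\vartheta_s\omega)(0)$, the inner argument above is precisely what $\varphi(t,\vartheta_s\omega)$ acts on; adding $\ovz(\omega)(t+s)=\ovz(\vartheta_s\omega)(t)$ to both sides gives $\varphi(t+s,\omega)\x=\varphi(t,\vartheta_s\omega)\big(\varphi(s,\omega)\x\big)$. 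The identity $\varphi(0,\omega)\x=\x$ is immediate from $\vv(0,\ovz)\vv_0=\vv_0$.

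Finally I would treat regularity. By Proposition~\ref{zhat well def} and the construction of $\ovz_\alpha$ in Section~\ref{OU}, $\omega\mapsto\ovz(\omega)\in C_{1/2}(\R,X)$ is continuous, hence for each $t$ the maps $\omega\mapsto\ovz(\omega)(0)$, $\omega\mapsto\ovz(\omega)(t)\in X\hookrightarrow H$ and $\omega\mapsto\ovz(\omega)|_{[0,t]}\in L^4(0,t;\lL^4(\S)\cap H)$ are continuous; by Theorem~\ref{thm:limit} the solution operator $(\vv_0,\ovz)\mapsto\vv(t,\ovz)\vv_0$ is continuous from $H\times L^4(0,t;\lL^4(\S)\cap H)$ into $H$ (the force $\vf$ being held fixed). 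Composing these with subtraction and vector addition shows that, for every fixed $t\ge 0$, $(\omega,\x)\mapsto\varphi(t,\omega)\x$ is continuous, hence $\calF\otimes\calB(H)$-measurable, and in particular $\x\mapsto\varphi(t,\omega)\x$ is continuous. Since $t\mapsto\varphi(t,\omega)\x$ is continuous (because $\vv(\cdot,\ovz)\vv_0\in C([0,\infty);H)$ and $\ovz(\omega)\in C(\R;X)$), $\varphi$ is a Carath\'eodory map on $\R_+\times(\Omega\times H)$ and therefore $(\calB(\R_+)\otimes\calF\otimes\calB(H),\calB(H))$-measurable; together with the cocycle property this shows $(\varphi,\vartheta)$ is a random dynamical system. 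The only delicate point is the flow property of the solution operator of \eqref{equ:ode}: everything rests on the uniqueness part of Theorem~\ref{thm:existence}, and one must first confirm that the shifted pair $(\vw,\ovz^{(s)})$ has exactly the regularity demanded by the definition of solution before uniqueness may be invoked.
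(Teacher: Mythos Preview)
Your proof is correct and follows essentially the same approach as the paper's: the paper states that all RDS properties except the cocycle property follow from Theorem~\ref{thm:limit}, and that the cocycle identity is obtained by the techniques of \cite[Theorem~6.15]{BrzLi06}, which is precisely the flow property of the solution operator for \eqref{equ:ode} combined with the shift covariance~\eqref{equ:zalpha} that you carry out in detail. Your treatment of joint measurability via continuity in $(\omega,\x)$ for fixed $t$ plus continuity in $t$ (Carath\'eodory argument) is a clean way to make explicit what the paper leaves implicit in its appeal to Theorem~\ref{thm:limit}.
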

\begin{proof}
All properties of a random dynamical system, except the cocycle property,  follow
from Theorem~\ref{thm:limit}. Hence we only need to show that for any $\x\in H$,
\[
 \varphi(t+s,\omega)\x = \varphi(t,\vartheta_s \omega) \varphi(s,\omega)\x, \quad t,s \in \R^{+}.
\]
The proof follows from apply similar techniques used in
\cite[Theorem 6.15]{BrzLi06} to the stochastic Navier-Stokes equations on the sphere.
\end{proof}

Suppose that Assumption~\ref{ass:radon} is satisfied. If $u_s \in H$, $s\in \R$, $f\in V^\prime$
and $W_t$, $t \in \R$ is a two-sided Wiener process introduced after \eqref{equ:Wiener}
such that the Cameron-Martin (or Reproducing Kernel Hilbert) space of the Gaussian measure
$\calL(w_1)$ is equal to $K$. A process $\vu(t)$, $t\ge 0$, with trajectories in
$C([s,\infty);H)\cap L^2_{\loc}([s,\infty);V)\cap L^2_{\loc}([s,\infty);\lL^4(\S))$ is a solution
to problem \eqref{sNSEs} iff $\vu(s) = \vu_s$ and for any $\vphi \in V$, $t > s$,
\be\label{def:sol sNSE}
\begin{aligned}
(\vu(t),\vphi) &= (\vu(s),\vphi) - \nu \int_s^t (\A\vu(r),\vphi)dr - \int_s^t b(\vu(r),\vu(r),\vphi)dr \\
               & - \int_s^t (\CC\vu(r),\vphi) dr + \int_s^t (\vf,\vphi)dr
               + \int_s^t \langle \vphi, dW_r \rangle.
\end{aligned}
\ee

\begin{proposition}
In the framework as above, suppose that $\vu(t) = \ovz_\alpha(t) + \vv_\alpha(t)$, $t \ge s$,
where $\vv_\alpha$ is the unique solution to problem \eqref{equ:ode} with
initial data $\vu_0 - \ovz_\alpha(s)$ at time $s$. If the process $\vu(t)$, $t \ge s$, has
trajectories in $C([s,\infty);H)\cap L^2_{\loc}([s,\infty);V)\cap L^2_{\loc}([s,\infty);\lL^4(\S))$, then
it is a solution to problem \eqref{sNSEs}. Vice-versa, if a process $\vu(t)$, $t \ge s$,
with trajectories in   $C([s,\infty);H)\cap L^2_{\loc}([s,\infty);V)\cap L^2_{\loc}([s,\infty);\lL^4(\S))$
is a solution to problem \eqref{sNSEs}, then for any $\alpha \ge 0$, a process $\vv_\alpha(t)$, $t \ge s$,
defined by $\ovz_\alpha(t) = \vu(t) - \vv_\alpha(t)$, $t\ge s$, is a solution to \eqref{equ:ode} on $[s,\infty)$.
\end{proposition}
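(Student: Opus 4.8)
The plan is to read the statement as a change-of-variables identity: the two weak formulations \eqref{def:sol sNSE} (for \eqref{sNSEs}) and \eqref{def:soln} (for \eqref{equ:ode}) differ by exactly the weak form of the Ornstein--Uhlenbeck equation \eqref{OUequ}. The first step would be to record that, by Proposition~\ref{pro:stationary} (representation~\eqref{equ:integral}), the process $\ovz=\ovz_\alpha$ satisfies, in integrated weak form, for every $\vphi\in\calD(\A)$ and all $s\le t$,
\[
(\ovz(t),\vphi)=(\ovz(s),\vphi)-\int_s^t\big[\nu(\ovz(r),\A\vphi)+(\CC\ovz(r),\vphi)+\alpha(\ovz(r),\vphi)\big]\,dr+\int_s^t\langle\vphi,dW_r\rangle .
\]
I would test against $\vphi\in\calD(\A)$ rather than $\vphi\in V$ because $\ovz$ takes values only in $X\subset H$, so $(\ovz(r),\A\vphi)$ (with $\A\vphi\in H$) is the right surrogate for $\langle\A\ovz(r),\vphi\rangle$; all terms are then continuous in $r$ since $\ovz\in C_\loc(\R;X)$. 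The stochastic term is the same $H$-cylindrical noise $\int_s^t\langle\vphi,dW_r\rangle$ occurring in \eqref{def:sol sNSE}, by the very construction of $W$ from the two-sided Wiener process after \eqref{equ:Wiener}. I would similarly write \eqref{def:soln} and \eqref{def:sol sNSE} in integrated form against $\vphi\in\calD(\A)$, using $(\A\vu,\vphi)=(\vu,\A\vphi)$ for $\vu\in V$ by self-adjointness of $\A$, and recover the case $\vphi\in V$ at the end by density of $\calD(\A)$ in $V$ and the nonlinear bound \eqref{b_estimate3} (Lemma~\ref{lem:bL4}).

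For the first implication, given a solution $\vv_\alpha$ of \eqref{equ:ode} with $\vv_\alpha(s)=\vu_0-\ovz(s)$, set $\vu=\ovz+\vv_\alpha$ and add the integrated form of \eqref{def:soln} to the displayed weak form of \eqref{OUequ}. The two occurrences of $\pm\alpha(\ovz(r),\vphi)$ cancel, $\nu(\vv_\alpha,\A\vphi)+\nu(\ovz,\A\vphi)=\nu(\vu,\A\vphi)$ and $(\CC\vv_\alpha,\vphi)+(\CC\ovz,\vphi)=(\CC\vu,\vphi)$, and $b(\vv_\alpha+\ovz,\vv_\alpha+\ovz,\vphi)=b(\vu,\vu,\vphi)$ trivially because $\vu=\vv_\alpha+\ovz$; what remains is precisely \eqref{def:sol sNSE}, with $\vu(s)=\ovz(s)+(\vu_0-\ovz(s))=\vu_0$. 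The trajectory regularity of $\vu$ required in the definition of a solution of \eqref{sNSEs} is the one assumed in the statement; it can also be read off from $\vv_\alpha\in C([s,\infty);H)\cap L^2_\loc(V)$ (Theorem~\ref{thm:existence}, with $V\hookrightarrow\lL^4(\S)$ via \eqref{equ:L4}) together with the regularity of $\ovz$ from Section~\ref{OU}.

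For the converse, let $\vu$ be a solution of \eqref{sNSEs} in the stated class and put $\vv_\alpha=\vu-\ovz$. Subtracting the displayed weak form of \eqref{OUequ} from the integrated form of \eqref{def:sol sNSE} cancels the stochastic integrals $\int_s^t\langle\vphi,dW_r\rangle$ pathwise and leaves, for $\vphi\in\calD(\A)$,
\[
(\vv_\alpha(t),\vphi)=(\vv_\alpha(s),\vphi)-\int_s^t\big[\nu(\vv_\alpha,\A\vphi)+b(\vu,\vu,\vphi)+(\CC\vv_\alpha,\vphi)-\alpha(\ovz,\vphi)-(\vf,\vphi)\big]\,dr ,
\]
which, since $b(\vu,\vu,\vphi)=b(\vv_\alpha+\ovz,\vv_\alpha+\ovz,\vphi)$, is exactly the integrated form of \eqref{def:soln} with $\vv_\alpha(s)=\vu_s-\ovz(s)$. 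Here $\vv_\alpha$ inherits the class $C([s,\infty);H)\cap L^2_\loc(V)$ from $\vu$ and $\ovz$, and $\ovz\in L^4_\loc(\lL^4(\S)\cap H)$ is precisely what the definition of a solution of \eqref{equ:ode} demands; by Theorem~\ref{thm:existence} $\vv_\alpha$ is then the unique such solution, which closes the loop with the first implication.

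I expect the only genuinely technical point, and the place I would spend most effort, to be the rigorous weak form of the Ornstein--Uhlenbeck equation displayed above: identifying the $X$-valued stochastic convolution \eqref{equ:integral} with the $H$-cylindrical noise $\int_s^t\langle\vphi,dW_r\rangle$ of \eqref{def:sol sNSE} (by a finite-dimensional/stochastic-Fubini argument on the $M$-type $2$ space $X$), and verifying that $(\ovz(r),\A\vphi)$ and $(\CC\ovz(r),\vphi)$ are integrable and correctly represent the drift of $\ovz$ against test functions $\vphi\in\calD(\A)$. Everything past that — the $\pm\alpha(\ovz,\vphi)$ cancellation, the bilinear identity $b(\vu,\vu,\cdot)=b(\vv_\alpha+\ovz,\vv_\alpha+\ovz,\cdot)$, matching of initial data, and the density passage from $\calD(\A)$ to $V$ — is routine bookkeeping.
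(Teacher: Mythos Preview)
The paper does not actually supply a proof of this Proposition; it is stated and then the text moves on to the next paragraph. Your plan is the natural one and is correct: the two weak formulations differ exactly by the integrated weak form of the Ornstein--Uhlenbeck equation, and adding/subtracting that identity effects the change of variables $\vu\leftrightarrow\vv_\alpha+\ovz_\alpha$. Your choice to test first against $\vphi\in\calD(\A)$ (so that $(\ovz,\A\vphi)$ makes sense with $\ovz$ only in $H$) and then pass to $\vphi\in V$ by density and \eqref{b_estimate3} is the right way to handle the fact that $\ovz_\alpha$ is only an $X$-valued process.

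One point to watch in the converse direction: you assert that $\vv_\alpha=\vu-\ovz_\alpha$ inherits membership in $L^2_{\loc}([s,\infty);V)$ ``from $\vu$ and $\ovz$'', but the paper only builds $\ovz_\alpha$ as a process in $C_{1/2}(\R,X)$ with $X=\lL^4(\S)\cap H$, not in $L^2_{\loc}(\R;V)$. So the subtraction alone does not obviously give $\vv_\alpha\in L^2_{\loc}(V)$, which is part of the definition of a solution to \eqref{equ:ode}. Either one shows (via the analytic smoothing in \eqref{exp2} applied to the stochastic convolution \eqref{equ:integral}) that in fact $\ovz_\alpha\in L^2_{\loc}(\R;V)$ under Assumption~\ref{ass:radon}, or one argues indirectly: the subtraction shows that $\vv_\alpha$ satisfies the weak identity \eqref{def:soln} with the correct initial datum, and then Theorem~\ref{thm:existence} furnishes a (unique) solution $\tilde\vv$ of \eqref{equ:ode} in the right class; the forward implication then gives that $\tilde\vv+\ovz_\alpha$ solves \eqref{sNSEs} with the same data as $\vu$, and pathwise uniqueness forces $\tilde\vv=\vv_\alpha$. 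Either route closes the gap; the paper, having no proof here, leaves this to the reader.
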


Our previous results yield the existence and the uniqueness of solutions to problem \eqref{sNSEs} as well
as its continuous dependence on the data (in particular on the initial value $\vu_0$ and the force $\vf$).
Moreover, if we define, for $\x \in H$, $\omega \in \Omega$, and $t \ge s$,
\be\label{def:uv}
\vu(t,s; \omega,\vu_0) := \varphi(t-s; \vartheta_s \omega)\vu_0 =
 \vv (t,s; \omega,\vu_0 -\ovz(s)) + \ovz(t),
\ee
then for each $s\in \R$ and each $\vu_0 \in H$, the process $\vu(t)$, $t \ge s$, is a solution to problem
\eqref{sNSEs}.

Before presenting the main results of this section, we discuss the weak continuity of the RDS generated by the SNSE on the sphere.
\begin{lemma}\label{lem:weakcont1}
If $T>0$ then the map
\[
 H \ni \x \mapsto \vv(\cdot,\x) \in L^2([0,T];V)
\]
is continuous in the weak topologies.
\end{lemma}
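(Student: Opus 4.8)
The plan is a weak compactness argument anchored by the uniqueness statement of Theorem~\ref{thm:existence}. Fix $\ovz$, $\vf$ and $\alpha$, suppose $\x_n \rightharpoonup \x$ weakly in $H$, and set $R := \sup_n\|\x_n\| < \infty$ and $\vv_n := \vv(\cdot,\x_n)$ (the solution of \eqref{equ:ode} with initial datum $\x_n$ and the fixed $\ovz$, $\vf$). First I would collect $n$-uniform a priori bounds: the bound \eqref{equ:supT} and the energy inequality \eqref{equ:odevL} integrated over $[0,T]$ control $\{\vv_n\}$ in $L^\infty(0,T;H)\cap L^2(0,T;V)$, since they depend on the initial datum only through $\|\x_n\| \le R$. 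Moreover, from $\partial_t\vv_n = -\nu\A\vv_n - \B(\vv_n) - \B(\vv_n,\ovz) - \B(\ovz,\vv_n) - \CC\vv_n + F$, the bound $\|\B(\vv_n)\|_{V^\prime} \le C\|\vv_n\|\,\|\vv_n\|_V$ from \eqref{equ:Bu L4}, the analogous bounds for the two cross terms obtained from Lemma~\ref{lem:bL4} together with \eqref{equ:L4}, and $\|\CC\vv_n\|_{V^\prime}\le C\|\vv_n\|$, one checks (as in the corresponding step of the proof of Theorem~\ref{thm:existence}) that $\{\partial_t\vv_n\}$ is bounded in $L^2(0,T;V^\prime)$.

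Then, by Banach--Alaoglu together with the Aubin--Lions compactness lemma for the triple $V\subset H\subset V^\prime$ (the embedding $V\hookrightarrow H$ being compact, as already exploited below \eqref{concl:weak conv}; equivalently, the uniform $L^2(0,T;V^\prime)$ bound on $\partial_t\vv_n$ yields, exactly as in the proof of Theorem~\ref{thm:existence}, a uniform bound in $\calH^{\gamma,2}(\R;V,H)$ for some $\gamma>0$, so that Theorem~\ref{thm:compact} applies), I would pass to a subsequence --- not relabelled --- and obtain $\vv_\ast\in L^\infty(0,T;H)\cap L^2(0,T;V)$ with $\partial_t\vv_\ast\in L^2(0,T;V^\prime)$ such that $\vv_n\rightharpoonup\vv_\ast$ weakly in $L^2(0,T;V)$, weakly-$\ast$ in $L^\infty(0,T;H)$, and $\vv_n\to\vv_\ast$ strongly in $L^2(0,T;\lL^2(\S))$; by Lemma~\ref{lem:uprime}, $\vv_\ast\in C([0,T];H)$. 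To identify $\vv_\ast$ with $\vv(\cdot,\x)$ I would pass to the limit in the time-integrated weak form of \eqref{equ:ode} for $\vv_n$ tested against $\psi(t)\vphi$ with $\psi\in C^1_0([0,T))$, $\psi(0)=1$, and $\vphi$ ranging over $\bigcup_m H_m$, just as in the derivation of \eqref{limintpsi v0}. The linear terms converge by the weak convergence in $L^2(0,T;V)$; the quadratic term $\int_0^T b(\vv_n,\vv_n,\psi\vphi)\,dt$ converges to $\int_0^T b(\vv_\ast,\vv_\ast,\psi\vphi)\,dt$ by Lemma~\ref{lem:bvmvm}, whose hypotheses hold since $\psi\vphi$ is $C^1$ with bounded spatial derivatives and $\vv_n\to\vv_\ast$ strongly in $L^2(0,T;\lL^2(\S))$; and the two bilinear terms containing $\ovz$ converge exactly as in the proof of Theorem~\ref{thm:existence}, via the skew-symmetry \eqref{skew}, the estimate \eqref{b_estimate1}, and the strong $L^2(0,T;\lL^2(\S))$ convergence, using $\ovz\in L^4(0,T;\lL^4(\S))\subset L^2(0,T;\lL^2(\S))$. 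Hence $\vv_\ast$ satisfies \eqref{def:soln}. For the initial value, since $\vv_n(0)=\x_n\rightharpoonup\x$ in $H$ while the limiting identity carries the term $(\vv_\ast(0),\vphi)$ in place of $(\x_n,\vphi)$ (compare the last step of the proof of Theorem~\ref{thm:existence}, where $\vv(0)=\vv_0$ is extracted in this way), we get $(\vv_\ast(0)-\x,\vphi)=0$ for all $\vphi$ in a dense subset of $V$, so $\vv_\ast(0)=\x$. By the uniqueness part of Theorem~\ref{thm:existence}, $\vv_\ast=\vv(\cdot,\x)$.

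Since the limit $\vv(\cdot,\x)$ does not depend on the extracted subsequence, a routine Urysohn argument --- every subsequence of $\{\vv_n\}$ has a further subsequence converging weakly in $L^2(0,T;V)$ to $\vv(\cdot,\x)$ --- upgrades this to weak convergence of the whole sequence, i.e. $\vv(\cdot,\x_n)\rightharpoonup\vv(\cdot,\x)$ in $L^2(0,T;V)$, which is the asserted weak-to-weak continuity. The one genuinely delicate point is the passage to the limit in the nonlinear term $b(\vv_n,\vv_n,\cdot)$: weak convergence in $L^2(0,T;V)$ alone is insufficient, and it is precisely here that the $n$-uniform $L^2(0,T;V^\prime)$ bound on $\partial_t\vv_n$ --- hence the strong $L^2(0,T;\lL^2(\S))$ compactness and Lemma~\ref{lem:bvmvm} --- is needed.
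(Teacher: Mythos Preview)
The paper states Lemma~\ref{lem:weakcont1} (and the companion Lemma~\ref{lem:weakcont2}) without proof, so there is no ``paper's own proof'' to compare against. Your argument is correct and is exactly the natural one: it recycles the machinery already set up in the proof of Theorem~\ref{thm:existence} (the energy bounds \eqref{equ:supT}--\eqref{equ:odevL}, the $V^\prime$-bound on the time derivative, the compactness via Aubin--Lions or Theorem~\ref{thm:compact}, and Lemma~\ref{lem:bvmvm} for the quadratic term), with uniqueness supplying the identification of the limit and the subsequence-to-full-sequence upgrade. One small remark: the bounds \eqref{equ:supT} and \eqref{equ:odevL} are derived in the paper for the Galerkin approximants $\vv_L$, not directly for the solution $\vv$; you are implicitly using that the same differential inequality holds for the exact solution, which follows from Lemma~\ref{lem:uprime} once you know $\vv\in L^2(0,T;V)$ and $\partial_t\vv\in L^2(0,T;V^\prime)$. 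This is routine but worth a sentence.
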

\begin{lemma}\label{lem:weakcont2}
Let $T>0$. Then the maps
\[
  H \ni \x \mapsto \vv(t,\x) \in H,\quad t\in[0,T],
\]
are uniformly continuous in the weak topologies. More precisely, if $\x_n \rightarrow \x$ weakly in $H$,
then for any $\phi \in H$, $(\vv(\cdot,\x_n),\phi) \rightarrow (\vv(\cdot,\x),\phi)$ uniformly
on $[0,T]$, as $n\rightarrow \infty$.
\end{lemma}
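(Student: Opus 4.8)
The plan is to derive Lemma~\ref{lem:weakcont2} from the uniform a priori bounds already established in the proof of Theorem~\ref{thm:existence}, the compactness result of Theorem~\ref{thm:compact}, the uniqueness part of Theorem~\ref{thm:existence}, and a final Arzel\`a--Ascoli step that turns pointwise weak convergence into convergence uniform in $t$. First I would set up the uniform bounds. Suppose $\x_n \to \x$ weakly in $H$; by Banach--Steinhaus $M:=\sup_n\|\x_n\|<\infty$. Write $\vv_n=\vv(\cdot,\x_n)$ and $\vv=\vv(\cdot,\x)$ for the solutions of \eqref{equ:ode} with the same $\ovz$ and $\vf$. Since the right-hand side of the energy estimate \eqref{equ:supT} depends on the datum only through $\|\x_n\|^2\le M^2$, the sequence $\{\vv_n\}$ is bounded in $L^\infty(0,T;H)\cap L^2(0,T;V)$ uniformly in $n$. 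Feeding this into equation \eqref{equ:ode} and bounding $\B(\vv_n)$, $\B(\vv_n,\ovz)$, $\B(\ovz,\vv_n)$ in $L^2(0,T;V^\prime)$ via \eqref{equ:Bu L4}, \eqref{b_estimate3}, \eqref{skew} exactly as in the proof of Theorem~\ref{thm:existence} (and using $\ovz\in L^4(0,T;\lL^4(\S))\subset L^2(0,T;V^\prime)$), I would conclude that $\{\partial_t\vv_n\}$ is bounded in $L^2(0,T;V^\prime)$ as well.

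Next I would identify the limit along subsequences. It suffices to show that every subsequence of $\{\vv_n\}$ has a further subsequence along which $(\vv_n(t),\phi)\to(\vv(t),\phi)$ for each $t\in[0,T]$ and $\phi\in H$. Given a subsequence, the bounds of the previous paragraph together with the fractional-in-time compactness argument already used for Theorem~\ref{thm:existence} (Theorem~\ref{thm:compact}) yield a further subsequence, not relabelled, such that $\vv_n\rightharpoonup\tilde\vv$ in $L^2(0,T;V)$, $\vv_n\to\tilde\vv$ weakly-$*$ in $L^\infty(0,T;H)$, $\vv_n\to\tilde\vv$ strongly in $L^2(0,T;\lL^2(\S))$, and $\partial_t\vv_n\rightharpoonup\partial_t\tilde\vv$ in $L^2(0,T;V^\prime)$. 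Because the trace map $\vu\mapsto\vu(0)$ is bounded and linear on $\{\vu\in L^2(0,T;V):\partial_t\vu\in L^2(0,T;V^\prime)\}$ (Lemma~\ref{lem:uprime}), it is weakly continuous, so $\x_n=\vv_n(0)\rightharpoonup\tilde\vv(0)$ in $H$; comparing with $\x_n\rightharpoonup\x$ forces $\tilde\vv(0)=\x$. Then I would pass to the limit in the weak formulation \eqref{def:soln} tested with $\psi(t)\phi$, $\psi\in C^1_0([0,T))$: the linear terms by the weak and weak-$*$ convergences, the quadratic contribution $b(\vv_n+\ovz,\vv_n+\ovz,\cdot)$ by Corollary~\ref{cor:bvmvmL4} for the $b(\vv_n,\vv_n,\cdot)$ piece and by \eqref{skew}, \eqref{b_estimate3} combined with the strong $L^2(0,T;\lL^2(\S))$ convergence for the cross terms in $\ovz$, in the same way as in the proofs of Theorems~\ref{thm:existence} and \ref{thm:limit}. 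Hence $\tilde\vv$ solves \eqref{equ:ode} with datum $\x$, so by uniqueness $\tilde\vv=\vv$. Since the limit is independent of the subsequence, the standard subsequence argument gives $(\vv_n(t),\phi)\to(\vv(t),\phi)$ for every $t\in[0,T]$ and $\phi\in H$.

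Finally I would upgrade this to uniformity in $t$. The bound on $\partial_t\vv_n$ gives, for $\phi\in V$ and $0\le s\le t\le T$,
\[
|(\vv_n(t)-\vv_n(s),\phi)| = \left|\int_s^t\langle\partial_r\vv_n(r),\phi\rangle\,dr\right|
 \le \|\phi\|_V\,|t-s|^{1/2}\,\|\partial_t\vv_n\|_{L^2(0,T;V^\prime)} \le C\,\|\phi\|_V\,|t-s|^{1/2},
\]
so the scalar functions $t\mapsto(\vv_n(t),\phi)$ are equi-H\"older-$\tfrac12$; together with the pointwise convergence from the previous paragraph and compactness of $[0,T]$, an Arzel\`a--Ascoli argument gives $(\vv_n(\cdot),\phi)\to(\vv(\cdot),\phi)$ uniformly on $[0,T]$ for every $\phi\in V$. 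For a general $\phi\in H$ I would approximate it by $\phi_\varepsilon\in V$ and use $\sup_n\sup_{t\in[0,T]}\|\vv_n(t)\|\le C$ to get $\sup_t|(\vv_n(t),\phi-\phi_\varepsilon)|\le C\varepsilon$, closing the estimate by a $3\varepsilon$ argument. (Incidentally, this also re-proves Lemma~\ref{lem:weakcont1}, since the subsequence extraction already produces weak convergence in $L^2(0,T;V)$.)

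The step I expect to be the real obstacle is the limit passage in the second paragraph. It relies on two nontrivial inputs: the strong $L^2(0,T;\lL^2(\S))$ compactness of $\{\vv_n\}$, which must be obtained through the fractional-in-time bound on $\{\vv_n\}$ as in the existence proof rather than by a direct Aubin--Lions citation; and the correct identification $\tilde\vv(0)=\x$ of the initial value of the weak limit, which in turn hinges on the weak continuity of the trace map on $\{\vu\in L^2(0,T;V):\partial_t\vu\in L^2(0,T;V^\prime)\}$. Everything else is either a routine energy/interpolation estimate or a direct appeal to Lemma~\ref{lem:bvmvm} and Corollary~\ref{cor:bvmvmL4}.
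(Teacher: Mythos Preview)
The paper states Lemma~\ref{lem:weakcont2} (together with Lemma~\ref{lem:weakcont1}) without proof, so there is no argument in the paper to compare against; these results are taken as known, in the spirit of the analogous statements in \cite{BrzLi06}. Your proposal is correct and is precisely the standard route one would expect: uniform energy bounds from weak boundedness of the data, compactness via the fractional-time argument of Theorem~\ref{thm:compact} to get strong $L^2(0,T;H)$ convergence along subsequences, passage to the limit in the weak formulation using Lemma~\ref{lem:bvmvm}/Corollary~\ref{cor:bvmvmL4}, identification of the limit by uniqueness, and then equicontinuity of $t\mapsto(\vv_n(t),\phi)$ from the $L^2(0,T;V^\prime)$ bound on $\partial_t\vv_n$ together with Arzel\`a--Ascoli and density of $V$ in $H$.

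One small remark on the step you flag as the obstacle: the identification $\tilde\vv(0)=\x$ can also be recovered, without invoking weak continuity of the trace, by the same device used at the end of the existence proof---test against $\psi(t)\phi$ with $\psi\in C^1_0([0,T))$, $\psi(0)=1$, and compare the boundary terms, using that $(\vv_n(0),\phi)=(\x_n,\phi)\to(\x,\phi)$. Either argument works.
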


We have the Poincar\'{e} inequalities
\be\label{equ:Poincare}
 \begin{aligned}
   \|\vu\|_V^2 &\ge \lambda_1 \|\vu\|^2, \quad \mbox{ for all } \vu \in V, \\
   \|\A\vu\|^2 &\ge \lambda_1 \|\vu\|^2, \quad \mbox{ for all } \vu \in \calD(A).
 \end{aligned}
\ee

For any $\vu,\vv \in V$, we define a new scalar product $[\cdot,\cdot]:V \times V \rightarrow \R$ by the formula
$[\vu,\vv] = \nu(\vu,\vv)_V - \nu \frac{\lambda_1}{2}(\vu,\vv)$. Clearly,
$[\cdot,\cdot]$ is bilinear and symmetric. From \eqref{equ:Poincare}, we can prove
that $[\cdot,\cdot]$ define an inner product in $V$ with the norm
$[\cdot]=[\cdot,\cdot]^{1/2}$, which is equivalent to the norm $\|\cdot\|_V$.

\begin{lemma}\label{lem:NSEsol}
Suppose that $\vv$ is a solution to problem \eqref{equ:ode} on the time interval $[a,\infty)$ with
$\ovz \in L^4_{\loc}(\R^+, \lL^4(\S)) \cap L^2_{\loc}(\R^+,V^\prime)$ and $\alpha \ge 0$. Denote
$\vg(t) =\alpha \ovz(t) - \B(\ovz(t),\ovz(t))$, $t\ in [a,\infty)$. Then, for any $t \ge \tau \ge a$,
\be\label{equ:8_10}
\begin{aligned}
\|\vv(t)\|^2 &\le \|\vv(\tau)\|^2
e^{-\nu \lambda_1(t-\tau)+ \frac{3C^2}{\nu} \int_\tau^t \|\ovz(s)\|^2_{\lL^4}) ds} \\
 & \frac{3}{\nu} \int_\tau^t ( \|\vg(s)\|^2_{V^\prime} + \|\vf\|^2 )
 e^{-\nu \lambda_1(t-\tau)+ \frac{3C^2}{\nu} \int_s^t \|\ovz(\xi)\|^2_{\lL^4}) d\xi}
 ds
\end{aligned}
\ee
\be\label{equ:8_11}
\begin{aligned}
\|\vv(t)\|^2 &= \|\vv(\tau)\|^2 e^{-\nu \lambda_1(t-\tau)} + \\
&2 \int_\tau^t e^{-\nu \lambda_1(t-s)}(b(\vv(s),\ovz(s),\vv(t)) +
\langle\vg(s),v(s)\rangle + \langle\vf,\vv(s)\rangle
    - [\vv(s)]^2) ds
\end{aligned}
\ee
Here the scalar product $[\cdot,\cdot]: V \times V \rightarrow \R$ is defined by the formula
$[\vu,\vv] = \nu (\A \vu,\vv) - \nu \frac{\lambda_1}{2} (\vu,\vv)$.
\end{lemma}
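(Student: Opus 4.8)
The plan is to obtain both displays from a single energy identity, produced by testing equation \eqref{equ:ode} against the solution $\vv$ itself. Under the hypotheses, $\vv\in L^2_{\loc}([a,\infty);V)\cap C([a,\infty);H)$ by the definition of a solution, and, arguing exactly as in the proof of Theorem~\ref{thm:existence} (using Lemma~\ref{lem:bL4} together with $\ovz\in L^4_{\loc}(\lL^4(\S))$ to put $\B(\vv),\B(\vv,\ovz),\B(\ovz,\vv)$ in $L^2_{\loc}(V')$), the weak time derivative satisfies $\partial_t\vv\in L^2_{\loc}([a,\infty);V')$; moreover $\vg=\alpha\ovz-\B(\ovz,\ovz)\in L^2_{\loc}([a,\infty);V')$ since $\|\B(\ovz,\ovz)\|_{V'}\le C\|\ovz\|^2_{\lL^4(\S)}$ by \eqref{skew} and \eqref{b_estimate3}. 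Hence Lemma~\ref{lem:uprime}, applied to the Gelfand triple $V\subset H\subset V'$, gives that $\|\vv\|^2$ is absolutely continuous with $\tfrac12\partial_s\|\vv(s)\|^2=\langle\partial_s\vv(s),\vv(s)\rangle$ a.e. Substituting \eqref{equ:ode}, expanding $\B(\vv+\ovz,\vv+\ovz)=\B(\vv)+\B(\vv,\ovz)+\B(\ovz,\vv)+\B(\ovz,\ovz)$, and cancelling $b(\vv,\vv,\vv)=b(\ovz,\vv,\vv)=0$ by \eqref{skew} and $(\CC\vv,\vv)=0$ by \eqref{Cuu}, one arrives at the energy equality
\[
 \tfrac12\partial_s\|\vv(s)\|^2+\nu\|\vv(s)\|_V^2=-b(\vv(s),\ovz(s),\vv(s))+\langle\vg(s),\vv(s)\rangle+\langle\vf,\vv(s)\rangle,
\]
holding weakly on $(a,\infty)$, with locally integrable right-hand side (for the trilinear term use \eqref{skew}, \eqref{b_estimate3} and $\vv\in L^2_{\loc}(V)$, $\ovz\in L^4_{\loc}(\lL^4(\S))$).

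For \eqref{equ:8_11} I would rewrite the dissipation term through the scalar product $[\cdot,\cdot]$: its definition together with the first inequality of \eqref{equ:Poincare} gives $\nu\|\vv\|_V^2=[\vv]^2+\tfrac{\nu\lambda_1}{2}\|\vv\|^2$, so the energy equality turns into
\[
 \partial_s\|\vv(s)\|^2+\nu\lambda_1\|\vv(s)\|^2=2\bigl(-[\vv(s)]^2-b(\vv(s),\ovz(s),\vv(s))+\langle\vg(s),\vv(s)\rangle+\langle\vf,\vv(s)\rangle\bigr).
\]
Multiplying by the integrating factor $e^{\nu\lambda_1 s}$, integrating from $\tau$ to $t$ and dividing by $e^{\nu\lambda_1 t}$ -- that is, the variation-of-constants representation of the scalar linear ODE $y'+\nu\lambda_1 y=(\mathrm{r.h.s.})$ -- produces exactly the identity \eqref{equ:8_11}.

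For \eqref{equ:8_10} I would instead estimate the three terms on the right of the energy equality. The two duality pairings are bounded by Cauchy--Schwarz and Young's inequality, each absorbing a fraction of $\nu\|\vv\|_V^2$ and leaving a contribution bounded by a multiple of $\tfrac1\nu(\|\vg\|_{V'}^2+\|\vf\|^2)$; the trilinear term is handled by first using \eqref{skew} to write $b(\vv,\ovz,\vv)=-b(\vv,\vv,\ovz)$ (so the middle slot lies in $V$), then \eqref{b_estimate3}, \eqref{equ:L4} and Young's inequality, which absorbs a further fraction of $\nu\|\vv\|_V^2$ and leaves a term controlled by $\|\ovz(s)\|^2_{\lL^4(\S)}\|\vv(s)\|^2$ with the constant appearing in \eqref{equ:8_10}. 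Using $\|\vv\|_V^2\ge\lambda_1\|\vv\|^2$ from \eqref{equ:Poincare}, this yields a differential inequality of the form
\[
 \partial_s\|\vv(s)\|^2+\nu\lambda_1\|\vv(s)\|^2\le\frac{3C^2}{\nu}\|\ovz(s)\|^2_{\lL^4(\S)}\|\vv(s)\|^2+\frac{3}{\nu}\bigl(\|\vg(s)\|_{V'}^2+\|\vf\|^2\bigr),
\]
and \eqref{equ:8_10} then follows from the Gronwall lemma in the form ``$y'+ay\le b(s)\,y+c(s)$ implies $y(t)\le y(\tau)\,e^{-a(t-\tau)+\int_\tau^t b}+\int_\tau^t c(s)\,e^{-a(t-s)+\int_s^t b}\,ds$''. (Alternatively, \eqref{equ:8_10} can be read off from \eqref{equ:8_11} by discarding the term $-[\vv]^2\le 0$ and estimating the integrand, at the cost of a second invocation of Gronwall's lemma.)

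The single step requiring real care is the control of the trilinear term $b(\vv,\ovz,\vv)$: since the standing hypothesis only provides $\ovz\in\lL^4(\S)$ -- and in particular $\ovz\notin V$ -- one cannot feed $\ovz$ into the $V$-slot of \eqref{b_estimate3} directly, so the antisymmetry \eqref{skew} has to be invoked first in order to transfer the derivative onto $\vv$. Everything else -- verifying the integrability conditions needed to apply Lemma~\ref{lem:uprime} and to pass the pointwise energy equality to its integrated form, and the two bookkeeping computations with Young's and Gronwall's inequalities -- is routine.
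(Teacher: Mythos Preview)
Your proposal is correct and follows essentially the same route as the paper: derive the energy identity via Lemma~\ref{lem:uprime}, then (i) split off $\nu\lambda_1\|\vv\|^2$ via the $[\cdot,\cdot]$ product and apply variation of constants for \eqref{equ:8_11}, and (ii) estimate the trilinear and duality terms by Young's inequality and apply Gronwall for \eqref{equ:8_10}. Your explicit remark that one must first use the antisymmetry \eqref{skew} to move $\ovz$ out of the $V$-slot before invoking \eqref{b_estimate3} is a point the paper leaves implicit, and your derivation of \eqref{equ:8_11} actually yields $-b(\vv,\ovz,\vv)$ rather than the $+b$ appearing in the printed statement, which is a typo there (the sign is immaterial for the later uses).
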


\begin{proof}
By \cite[Lemma III.1.2]{Tem79}, we have
$\frac{1}{2}\partial_t \|\vv(t)\| = (\vv(t),\vv(t))$. Hence
\begin{equation}\label{eqn:812}
\begin{aligned}
\frac{1}{2}\frac{d}{dt} \|\vv\|^2 &=
\nu(\A\vv,\vv) - (\CC\vv,\vv) - (\B(\vv,\vv),\vv) - (B(\ovz,\vv),\vv) \\
& \qquad -(B(\vv,\ovz),\vv) + \langle \vg,\vv\rangle +
\langle\vf,\vv\rangle\\
& = \nu \|\vv\|^2_V - b(\vv,\ovz,\vv) +
\langle\vg,\vv \rangle + \langle \vf,\vv\rangle.
\end{aligned}
\end{equation}
From \eqref{b_estimate3} and invoking the Young inequality,
we have
\begin{align*}
|b(\vv,\ovz,\vv)| &\le C\|\vv\|_{\mathbb{L}^4} \|\vv\|_V \|\ovz\|_{\mathbb{L}^4} \\
        &\le \frac{\nu}{6}\|\vv\|^2_V + \frac{3C^2}{2\nu} \|\vv\|^2
        \|\ovz\|^2_{\mathbb{L}^4},
\end{align*}
and
\begin{align*}
|\langle \vg,\vv\rangle+\langle\vf,\vv\rangle| &\le \|\vg\|_{V^\prime} \|\vv\|_V   + \|\vf\|_{V^\prime}\|\vv\|_{V} \\
 &\le \frac{\nu}{3}     \|\vv\|^2 + \frac{3}{2\nu} \|g\|^2_{V^\prime} + \frac{3}{2\nu}
 \|\vf\|^2_{V^\prime}.
\end{align*}
Hence from \eqref{eqn:812} and \eqref{equ:Poincare}, we get
\begin{align*}
\frac{d}{dt} \|\vv(t)\|^2 &\le  -\nu\| \vv(t) \|^2 +
\frac{3C^2}{\nu}\|\ovz(t)\|^2_{\mathbb{L}^4} \|\vv(t)\|^2 +
  \frac{3}{\nu}\|\vg(t)\|^2_{V^\prime} + \frac{3}{\nu} \|\vf\|^2_{V^\prime} \\
  &\le\left(-\nu\lambda_1 + \frac{3C^2}{\nu}\|\ovz(t)\|^2_{\mathbb{L}^4}\right)
  \|\vv(t)\|^2
  + \frac{3}{\nu}\|\vg(t)\|^2_{V^\prime} + \frac{3}{\nu} \|\vf\|^2_{V^\prime}.
\end{align*}
Next, using the Gronwall Lemma, we arrive at \eqref{equ:8_10}.
By
adding and subtracting $\nu \frac{\lambda_1}{2}\|\vv(t)\|^2$ from \eqref{eqn:812}
we find that
\begin{align}
\frac{d}{dt}\|\vv(t)\|^2 &+ \nu \lambda_1 \|\vv(t)\|^2 + 2[\vv(t)]^2\\
    & = 2b(\vv(t),\ovz(t),\vv(t)) + 2 \langle \vg(t),\vv(t)\rangle + 2\langle\vf(t),\vv(t)\rangle.
\end{align}
Hence \eqref{equ:8_11} follows by the variation of constants formula.
\end{proof}
\begin{proposition}\label{prop:ACprop}
The RDS $\varphi$ is asymptotically compact provided for any bounded set $B \subset H$ there exists
a closed and bounded random set $K(\omega)$ absorbing $B$.
\end{proposition}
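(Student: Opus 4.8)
The plan is to establish asymptotic compactness by the ``energy equation'' (Ball--Rosa) method, whose three ingredients are exactly what the preceding results supply: weak continuity of the solution map of \eqref{equ:ode} (Lemmas~\ref{lem:weakcont1} and \ref{lem:weakcont2}), the energy identity \eqref{equ:8_11}, and the assumed bounded random absorbing set $K(\omega)$. Fix $\omega$, a bounded $B\subset H$, and sequences $t_n\to\infty$, $\x_n\in B$; abbreviate $\ovz=\ovz_\alpha$ and $\xi_n:=\varphi(t_n,\vartheta_{-t_n}\omega)\x_n$. We must extract a subsequence of $(\xi_n)$ converging in $H$. By absorption, $\xi_n\in K(\omega)$ for $n$ large, so $(\xi_n)$ is bounded in $H$; passing to a subsequence, $\xi_n\rightharpoonup\eta$ weakly in $H$. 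Since in a Hilbert space weak convergence together with convergence of norms implies strong convergence, and $\|\eta\|\le\liminf_n\|\xi_n\|$ by weak lower semicontinuity, it suffices to prove $\limsup_n\|\xi_n\|\le\|\eta\|$; as $\ovz(\omega)(0)$ is a fixed vector of $H$, this is equivalent to $\limsup_n\|\xi_n-\ovz(\omega)(0)\|\le\|\eta-\ovz(\omega)(0)\|$.

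The next step is to look back over a window of length $m\in\N$. Using the cocycle property, \eqref{def:varphi} and the stationarity relation \eqref{equ:zalpha} (which gives $\ovz(\vartheta_{-m}\omega)(m)=\ovz(\omega)(0)$ and makes $\ovz(\vartheta_{-m}\omega)|_{[0,m]}$ independent of $n$), one checks that for $t_n>m$,
\[
\xi_n-\ovz(\omega)(0)=\vv\bigl(m,\ovz(\vartheta_{-m}\omega)\bigr)\zeta_n,\qquad \zeta_n:=\varphi\bigl(t_n-m,\vartheta_{-(t_n-m)}\vartheta_{-m}\omega\bigr)\x_n-\ovz(\vartheta_{-m}\omega)(0),
\]
so $\hat\vv_n:=\vv(\cdot,\ovz(\vartheta_{-m}\omega))\zeta_n$ is the solution of \eqref{equ:ode} on $[0,m]$ with driving process $\ovz(\vartheta_{-m}\omega)$ and initial datum $\zeta_n$, and $\hat\vv_n(m)=\xi_n-\ovz(\omega)(0)$. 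By absorption of $B$ at $\vartheta_{-m}\omega$, for $n$ large $\zeta_n+\ovz(\vartheta_{-m}\omega)(0)\in K(\vartheta_{-m}\omega)$, hence $\|\zeta_n\|\le R_m(\omega):=\sup_{k\in K(\vartheta_{-m}\omega)}\|k\|+\|\ovz(\omega)(-m)\|$. Passing to a further subsequence and diagonalising over $m$, we get $\zeta_n\rightharpoonup\zeta^{(m)}$ in $H$ for every $m$; by Lemma~\ref{lem:weakcont2}, $\hat\vv_n(s)\rightharpoonup\hat\vv^{(m)}(s):=\vv(s,\ovz(\vartheta_{-m}\omega))\zeta^{(m)}$ in $H$ uniformly on $[0,m]$, and by Lemma~\ref{lem:weakcont1}, $\hat\vv_n\rightharpoonup\hat\vv^{(m)}$ weakly in $L^2(0,m;V)$; comparing the two weak limits of $\hat\vv_n(m)$ yields $\hat\vv^{(m)}(m)=\eta-\ovz(\omega)(0)$.

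Now write the energy identity \eqref{equ:8_11} for $\hat\vv_n$ on $[0,m]$ and let $n\to\infty$ with $m$ fixed. The initial term is $\le R_m(\omega)^2e^{-\nu\lambda_1 m}$; the linear terms $\inprod{\vg(s)}{\hat\vv_n(s)}$ and $\inprod{\vf}{\hat\vv_n(s)}$ pass to the limit by the weak $L^2(0,m;V)$ convergence (here $\vg$ is as in Lemma~\ref{lem:NSEsol} with $\ovz=\ovz(\vartheta_{-m}\omega)$, and $\vg\in L^2(0,m;V')$ since $\ovz(\vartheta_{-m}\omega)\in C_{1/2}(\R,X)$ and $\B(\ovz(\vartheta_{-m}\omega))\in L^2(0,m;V')$ by \eqref{equ:Bu L4}); for $b(\hat\vv_n,\ovz(\vartheta_{-m}\omega),\hat\vv_n)=-b(\hat\vv_n,\hat\vv_n,\ovz(\vartheta_{-m}\omega))$ (by \eqref{skew}) one first upgrades to strong convergence in $L^2(0,m;\lL^2(\S))$ via the Aubin--Lions lemma — $\hat\vv_n$ is bounded in $L^2(0,m;V)\cap L^\infty(0,m;H)$ while, by \eqref{equ:ode} and the bounds on $\A$, $\CC$ and $\B$ from Lemma~\ref{lem:bL4} and \eqref{equ:Bu L4}, $\partial_t\hat\vv_n$ is bounded in $L^2(0,m;V')$ — and then applies Corollary~\ref{cor:bvmvmL4}; finally the dissipation contributes $-2\int_0^m e^{-\nu\lambda_1(m-s)}[\hat\vv_n(s)]^2\,ds$, and since $\int_0^m e^{-\nu\lambda_1(m-s)}[\,\cdot(s)]^2\,ds$ is a (weighted) squared Hilbert norm on $L^2(0,m;V)$, weak lower semicontinuity gives $\limsup_n\bigl(-2\int_0^m e^{-\nu\lambda_1(m-s)}[\hat\vv_n]^2\bigr)\le-2\int_0^m e^{-\nu\lambda_1(m-s)}[\hat\vv^{(m)}]^2$. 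Collecting these and recognising the resulting limiting integral as $\|\hat\vv^{(m)}(m)\|^2-\|\hat\vv^{(m)}(0)\|^2e^{-\nu\lambda_1 m}$ by \eqref{equ:8_11} applied to $\hat\vv^{(m)}$, we obtain
\[
\limsup_{n\to\infty}\|\xi_n-\ovz(\omega)(0)\|^2\ \le\ R_m(\omega)^2e^{-\nu\lambda_1 m}+\|\eta-\ovz(\omega)(0)\|^2,\qquad m\in\N .
\]
Letting $m\to\infty$ along the diagonal subsequence and using \eqref{equ:zhatgrow} (so $\|\ovz(\omega)(-m)\|^2e^{-\nu\lambda_1 m}\to0$) together with temperedness of $K$ (so $R_m(\omega)^2e^{-\nu\lambda_1 m}\to0$) gives $\limsup_n\|\xi_n-\ovz(\omega)(0)\|\le\|\eta-\ovz(\omega)(0)\|$, whence $\xi_n\to\eta$ strongly in $H$ along this subsequence. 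This proves asymptotic compactness.

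I expect the main obstacle to be the passage to the limit in the energy identity: one must produce the strong $L^2(0,m;\lL^2(\S))$ convergence needed for the nonlinear term (which rests on the $L^2(0,m;V')$ bound on $\partial_t\hat\vv_n$ coming from Lemma~\ref{lem:bL4} and \eqref{equ:Bu L4}, combined with Aubin--Lions), and apply weak lower semicontinuity in the correct direction to the dissipation term $[\hat\vv_n]^2$. The accompanying bookkeeping — linking the cocycle, the pulled-back Ornstein--Uhlenbeck process through \eqref{equ:zalpha} and the sliding window $[t_n-m,t_n]$, and then making the remainder $R_m(\omega)^2e^{-\nu\lambda_1 m}$ negligible as $m\to\infty$ via \eqref{equ:zhatgrow} and temperedness of $K(\omega)$ — is routine but must be carried out carefully.
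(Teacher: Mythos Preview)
Your overall scheme---energy identity, weak continuity, weak lower semicontinuity of the dissipative term, passage to the limit in the trilinear term via strong $L^2$ compactness---is exactly the Ball--Rosa method the paper uses, and Steps~I--III of the paper's proof match your reduction to a $\limsup$ inequality almost verbatim.

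The substantive gap is in how you kill the initial-energy remainder. You bound the datum at the back of the window by $R_m(\omega)=\sup_{k\in K(\vartheta_{-m}\omega)}\|k\|+\|\ovz(\omega)(-m)\|$ and then need $R_m(\omega)^2 e^{-\nu\lambda_1 m}\to 0$. For the $\ovz$ part this follows from \eqref{equ:zhatgrow}, but for the $K$ part you invoke ``temperedness of $K$'', which is \emph{not} assumed in Proposition~\ref{prop:ACprop}: the hypothesis is only that $K(\omega)$ is closed and bounded for each fixed $\omega$ and absorbs $B$. Without temperedness, $\sup_{k\in K(\vartheta_{-m}\omega)}\|k\|$ may grow faster than $e^{\nu\lambda_1 m/2}$ and your final limit fails.

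The paper avoids this by \emph{not} stopping at the absorbing set at time $-k$. Instead it pushes the dissipation estimate \eqref{equ:8_10} all the way back to the original start time $-t_{n^{(k)}}$, so the initial datum is $\x_{n^{(k)}}-\ovz(-t_{n^{(k)}})$ with $\x_{n^{(k)}}\in B$ bounded independently of $n$ and $\omega$. The resulting remainder (equation \eqref{equ:8_18}) is then controlled by the explicit properties of the Ornstein--Uhlenbeck process (the choice of $\alpha$ so that $\E\|\ovz_\alpha(0)\|_{\lL^4}^2<\nu^2\lambda_1/(6C^2)$ together with the ergodic limit \eqref{equ:Ezalpha} and the growth bound \eqref{equ:zhatgrow}); this is precisely the content of Lemmas~\ref{lem:8_4}--\ref{lem:8_7}, which produce a fixed $h\in L^1(-\infty,0)$ with remainder $\int_{-\infty}^{-k}h(s)\,ds\to 0$. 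The absorbing set $K(\omega)$ is used only to extract weak limits (Steps~I and II), never quantitatively. To repair your argument you must either add temperedness of $K$ as an extra hypothesis, or replace your $R_m(\omega)$ bound by the paper's two-stage estimate: energy identity \eqref{equ:8_11} on $[-k,0]$ combined with the Gronwall bound \eqref{equ:8_10} on $[-t_{n^{(k)}},-k]$.
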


Let us recall that the RDS $\varphi$ is independent of the auxiliary parameter $\alpha \in \nN$. For
reasons that will become clear later, we choose $\alpha$ such that
$\E\|\ovz_\alpha(0)\|^2_{\lL^4} \le \frac{\nu^2 \lambda_1}{6C^2}$, where $\ovz_\alpha(t)$, $t\in \R$,
is the Ornstein-Uhlenbeck process from Section~\ref{OU}, $C>0$ is a certain universal constant,
$\lambda_1$ is the constant from \eqref{equ:Poincare} and $\nu>0$ is the viscosity.

\noindent
\begin{proof}
Suppose that $B \subset H$ is a bounded set, $(t_n)_{n=1}^\infty$ is an increasing sequence of positive
numbers such that $t_n \rightarrow \infty$ and $(\x_n)_n$ is a $B$-valued sequence. By our assumptions we
can find a closed bounded random set $K(\omega)$ in $H$ that absorbs $B$. We fix $\omega \in \Omega$.\\
\emph{Step I. Reduction. }Since $K(\omega)$ absorbs $B$, for $n\in \nN$ sufficiently large,\linebreak
$\varphi(t_n,\vartheta_{-t_n} \omega) B \subset K(\omega)$. Since $K(\omega)$ is closed and bounded,
and hence weakly compact, without loss of generality we may assume that
$\varphi(t_n,\vartheta_{-t_n}\omega) B \subset K(\omega)$ for all $n \in \nN$ and, for some $\vy_0 \in K(\omega)$,
\be\lb{def:y0}
  \varphi(t_n,\vartheta_{-t_n} \omega) \x_n \rightarrow \vy_0 \quad \mbox{ weakly in } H.
\ee
Since $\ovz(0) \in H$, we also have
\[
  \varphi(t_n,\vartheta_{-t_n}\omega)\x_n - \ovz(0) \rightarrow \vy_0 - \ovz(0) \quad \mbox{ weakly in } H.
\]
In particular,
\be\label{equ:liminf}
 \|\vy_0 - \ovz(0)\| \le \liminf_{n\rightarrow \infty} \| \varphi(t_n,\vartheta_{-t_n}\omega) \x_n - \ovz(0)\|.
\ee

We claim that it is enough to prove that for some subsequence $\{ n' \} \subset \nN$
\be\label{equ:limsup}
\|\vy_0 - \ovz(0)\| \ge \limsup_{n'\rightarrow \infty} \| \varphi(t_{n'},\vartheta_{-t_{n'}}\omega) \x_{n'} - \ovz(0)\|.
\ee
Indeed, since $H$ is a Hilbert space, \eqref{equ:liminf} in conjunction with \eqref{equ:limsup} imply that
\[
 \varphi(t_n,\vartheta_{-t_n}\omega)\x_n - \ovz(0) \rightarrow \vy_0 - \ovz(0) \quad \mbox{ strongly in } H
\]
which implies that
\[
\varphi(t_n,\vartheta_{-t_n} \omega) \x_n \rightarrow \vy_0 \quad \mbox{ strongly in } H.
\]
Therefore, in order to show that $\{\varphi(t_n,\vartheta_{-t_n}\omega)\x_n\}_n$ is relatively compact in $H$ we need
to prove that \eqref{equ:limsup} holds true.

\emph{Step II. Construction of a negative trajectory}, i.e. a sequence $(\vy_n)_{n=-\infty}^0$ such that
$\vy_n \in K(\vartheta_n \omega)$, $n \in \zZ^{-}$, and $\vy_k = \varphi(k-n,\vartheta_n \omega)\vy_n$, $n<k \le 0$.

Since $K(\vartheta_{-1}\omega)$ absorbs $B$, there exists a constant $N_1(\omega) \in \nN$, such that
\[
 \{ \varphi(-1+t_n,\vartheta_{1-t_n} \vartheta_{-1} \omega)\x_n : n \ge N_1(\omega) \} \subset K(\vartheta_{-1}\omega).
\]
Hence we can find a subsequence $\{n'\} \subset \nN$ and $\vy_{-1} \in K(\vartheta_{-1}\omega)$ such that
\be\lb{def:ym1}
 \varphi(-1+t_{n'}, \vartheta_{-t_{n'}}\omega)\x_{n'} \rightarrow \vy_{-1} \mbox{ weakly in } H.
\ee
We observe that the cocycle property, with $t=1$, $s=t_{n'}-1$, and $\omega$ being replaced by $\vartheta_{-t_{n'}}\omega$,
reads as follows:
\[
\varphi(t_{n'},\vartheta_{-t_{n'}}\omega) = \varphi(1,\vartheta_{-1}\omega) \varphi(-1+t_{n'},\vartheta_{t_{n'}}\omega).
\]
Hence, by Lemma~\ref{lem:weakcont2}, from \eqref{def:y0} and \eqref{def:ym1} we infer that
$\varphi(1,\vartheta_{-1}\omega) \vy_{-1} = \vy_0$. By induction, for each $k=1,2,\ldots,$ we can construct a subsequence
$\{n^{(k)} \} \subset \{ n^{(k-1)} \}$ and $\vy_{-k} \in K(\vartheta_{-k}\omega)$, such that
$\varphi(1,\vartheta_{-k}\omega)\vy_{-k} = \vy_{-k+1}$ and
\be\lb{def:ymk}
\varphi(-k+ t_{n^{(k)}}, \vartheta_{-t_{n^{(k)}}}\omega)\x_{n^{(k)}} \rightarrow \vy_{-k} \mbox{ weakly in } H,
 \mbox{ as } n^{(k)} \rightarrow \infty.
\ee

As above, the cocycle property with $t=k$, $s=t_{n^{(k)}}$ and $\omega$ being replaced by $\vartheta_{-t_{n^{(k)}}} \omega$
yields
\be\label{equ:varphi2}
\varphi( t_{n^{(k)}}, \vartheta_{- t_{n^{(k)}}  } \omega)
= \varphi(k,\vartheta_{-k}\omega) \varphi( t_{n^{(k)}} - k, \vartheta_{ - t_{n^{(k)}}  } \omega), \quad k \in \nN.
\ee
Hence, from \eqref{def:ymk} and by applying Lemma~\ref{lem:weakcont1}, we get
\be\label{equ:weaklim}
\begin{aligned}
 \vy_0 &= \mbox{w} - \lim_{ n^{(k)} \rightarrow \infty}
 \varphi( t_{n^{(k)}}, \vartheta_{ - t_{n^{(k)}}  } \omega) \x_{ n^{(k)}} \\
       &= \mbox{w} - \lim_{ n^{(k)} \rightarrow \infty}
       \varphi(k,\vartheta_{-k}\omega) \varphi( t_{n^{(k)}} - k, \vartheta_{ - t_{n^{(k)}}  } \omega) \x_{ n^{(k)}} \\
       &= \varphi(k,\vartheta_{-k}\omega)
         (  \mbox{w} - \lim_{ n^{(k)} \rightarrow \infty}
           \varphi( t_{n^{(k)}} - k, \vartheta_{ - t_{n^{(k)}}  } \omega) \x_{ n^{(k)}} ) \\
       &= \varphi(k,\vartheta_{-k}\omega) \vy_{-k},\\
\end{aligned}
\ee
where w-$\lim$ denotes the limit in the weak topology on $H$. The same proof yields a more general property:
\[
\varphi(j, \vartheta_{-k} \omega) \vy_{-k} = \vy_{-k+j} \mbox {  if } 0 \le j \le k.
\]

Before continuing with the proof, let us point out that \eqref{equ:weaklim} means precisely that
$\vy_0 = \vu(0,-k;\omega,\vy_{-k})$, where $\vu$ is defined in \eqref{def:uv}.\\
\emph{Step III. Proof of \eqref{equ:limsup}. }From now on, unless explicitly stated, we fix $k \in \nN$, and we will consider
problem \eqref{sNSEs} on the time interval $[-k,0]$. From \eqref{def:uv} and \eqref{equ:varphi2}, with $t=0$
and $s=-k$, we have
\be\label{equ:8_15}
\begin{aligned}
&\|\varphi( t_{n^{(k)}}, \vartheta_{- t_{n^{(k)}}  } \omega)\x_{ n^{(k)} } - \ovz(0)\|^2  \\
& \qquad = \| \varphi(k,\vartheta_{-k}\omega) \varphi( t_{n^{(k)}} - k, \vartheta_{ - t_{n^{(k)}}  }  \omega)\x_{ n^{(k)} }
       - \ovz(0)\|^2 \\
& \qquad = \| \vv(0,-k;\omega, \varphi( t_{n^{(k)}} - k, \vartheta_{ - t_{n^{(k)}}  }  \omega)\x_{ n^{(k)}} - \ovz(-k))\|^2.
\end{aligned}
\ee

Let $\vv$ be the solution to \eqref{equ:ode} on $[-k,\infty)$ with $\ovz=\ovz_\alpha(\cdot,\omega)$
and the initial condition at time $-k$:
$\vv(-k) = \varphi( t_{n^{(k)}} - k, \vartheta_{ - t_{n^{(k)}} } \omega)\x_{n^{(k)}} - \ovz(-k)$.
In other words,
\[
 \vv(s) =
 \vv\big(s,-k;\omega, \varphi( t_{n^{(k)}} - k, \vartheta_{ - t_{n^{(k)}} } \omega)\x_{n^{(k)}} - \ovz(-k) \big),
 \quad s \ge -k.
\]
From \eqref{equ:8_15} and \eqref{equ:8_11} with $t=0$ and $\tau=-k$ we infer that
\be\label{equ:8_16}
\begin{aligned}
&\|\varphi(t_{n^{(k)}},\vartheta_{ - t_{n^{(k)}} } \omega)\x_{n^{(k)}} - \ovz(0)\|^2 =
e^{-\nu \lambda_1 k}\|\varphi( t_{n^{(k)}} - k, \vartheta_{ - t_{n^{(k)}} } \omega)\x_{n^{(k)}} - \ovz(-k)\|^2 \\
&+ 2 \int_{-k}^0 e^{\nu \lambda_1 s} (b(\vv(s),\ovz(s),\vv(s)) + \langle \vg(s),\vv(s) \rangle
+ \langle \vf, \vv(s) \rangle - [\vv(s)]^2) ds.
\end{aligned}
\ee

It is enough to find a nonnegative function $h \in L^1(-\infty,0)$ such that
\be\label{equ:8_17}
\limsup_{ n^{(k)} \rightarrow \infty } \|\varphi(t_{n^{(k)}},\vartheta_{ - t_{n^{(k)}} } \omega)\x_{n^{(k)}} - \ovz(0)\|^2 \le \int_{-\infty}^{-k} h(s) ds + \|\vy_0 - \ovz(0)\|^2.
\ee
For, if we define the diagonal process $(m_j)_{j=1}^\infty$ by $m_j = j^{(j)}$, $j\in \nN$, then for each
$k\in \nN$, the sequence $(m_j)_{j=k}^\infty$ is a subsequence of the sequence $(n^{(k)})$ and hence by
\eqref{equ:8_17},
$
 \limsup_j\|\varphi(t_{m_j},\vartheta_{ - t_{m_j} } \omega)\x_{m_j} - \ovz(0)\|^2
 \le \int_{-\infty}^{-k} h(s) ds + \|\vy_0 - \ovz(0)\|^2.
$ Taking the $k\rightarrow \infty$ limit in the last inequality we infer that
\[
 \limsup_j\|\varphi(t_{m_j},\vartheta_{ - t_{m_j} } \omega)\x_{m_j} - \ovz(0)\|^2
  \le \|\vy_0 - \ovz(0)\|^2,
\]
which proves claim \eqref{equ:limsup}.\\
\emph{Step IV. Proof of \eqref{equ:8_17}. }We begin with estimating the first term on the RHS
of \eqref{equ:8_16}. If $-t_{n^{(k)}} < -k$, then by \eqref{def:uv} and \eqref{equ:8_10} we infer that
\be\label{equ:8_18}
\begin{aligned}
&\|\varphi(t_{n^{(k)}} - k, \vartheta_{-t_{n^{(k)}} } \omega) \x_{n^{(k)}} - \ovz(-k)\|^2 \\
&\qquad= \|\vv(-k,-t_{n^{(k)}};\vartheta_{-k}\omega, \x_{n^{(k)}} - \ovz(-t_{n^{(k)}})\|^2 e^{-\nu \lambda_1 k}  \\
&\le e^{-\nu \lambda_1 k} \big\{ \| \x_{n^{(k)}} - \ovz(-t_{n^{(k)}}) \|^2
                e^{ -\nu\lambda_1(t_{n^{(k)}} -k)+ \frac{3C^2}{\nu} \int_{-t_{n^{(k)}} }^{-k} \|\ovz(s)\|^2_{\lL^4} ds} \\
& \qquad + \frac{3}{\nu} \int_{-t_{n^{(k)}} }^{-k} [\|\vg(s)\|^2_{V^\prime} + \|\vf\|^2_{V^\prime} ]
                              e^{-\nu \lambda_1(-k-s) + \frac{3C^2}{\nu} \int_s^{-k} \|\ovz(\zeta)\|^2_{\lL^4}d\zeta}   \big\}  \\
&\le 2I_{ n^{(k)}} + 2 II_{n^{(k)}}+ \frac{3}{\nu} III_{n^{(k)}}+ \frac{3}{\nu} IV_{n^{(k)}},
\end{aligned}
\ee
where
\begin{align*}
I_{ n^{(k)}}  &=\|\x_{n^{(k)}}\|^2 e^{ -\nu \lambda_1 t_{n^{(k)}} +
             \frac{3C^2}{\nu} \int_{-t_{n^{(k)}} }^{-k} \|\ovz(s)\|^2_{\lL^4} ds  }  \\
II_{ n^{(k)}} &= \|\ovz( t_{n^{(k)}} )\|^2 e^{ -\nu \lambda_1 t_{n^{(k)}} +
             \frac{3C^2}{\nu} \int_{-t_{n^{(k)}} }^{-k} \|\ovz(s)\|^2_{\lL^4} ds  }  \\
III_{ n^{(k)}}&= \int_{-t_{n^{(k)}} }^{-k} \|\vg(s)\|^2_{V^\prime} e^{ -\nu\lambda_1 s + \frac{3C^2}{\nu}
                      \int_s^{-k}\|\ovz(\zeta)\|^2_{\lL^4}d\zeta  } \\
IV_{ n^{(k)}} &=\int_{-t_{n^{(k)}} }^{-k} \|\vf(s)\|^2_{V^\prime} e^{ -\nu\lambda_1 s + \frac{3C^2}{\nu}
                      \int_s^{-k}\|\ovz(\zeta)\|^2_{\lL^4}d\zeta  }
\end{align*}

We will find a nonnegative function $h \in L^1(-\infty,0)$ such that
\be\label{equ:8_19}
\limsup_{ n^{(k)} \rightarrow \infty} \| \varphi( t_{n^{(k)}} - k,\vartheta_{-t_{n^{(k)}}}\omega)\x_{n^{(k)}} - \ovz(-k)\|^2
  e^{-\nu \lambda_1 k} \le \int_{-\infty}^{-k} h(s) ds, \quad  k \in \nN.
\ee
This will be accomplished as soon as we prove the following four lemmas.

\begin{lemma}\label{lem:8_4}
$\limsup_{ n^{(k)} \rightarrow \infty } I_{n^{(k)}} = 0.$
\end{lemma}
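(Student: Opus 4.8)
\textbf{Proof plan for Lemma \ref{lem:8_4}.} The plan is to show that the exponent inside $I_{n^{(k)}}$ tends to $-\infty$ as $n^{(k)}\to\infty$ (with $k$ fixed), while the prefactor stays bounded. Since $(\x_n)_n$ is a sequence in the bounded set $B\subset H$, there is a constant $R<\infty$ with $\|\x_{n^{(k)}}\|\le R$ for all $n,k$; hence it suffices to control
\[
 E_{n^{(k)}} := -\nu\lambda_1 t_{n^{(k)}} + \frac{3C^2}{\nu}\int_{-t_{n^{(k)}}}^{-k}\|\ovz(s)\|^2_{\lL^4(\S)}\,ds .
\]

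First I would write $t = t_{n^{(k)}}$ and use $-k\le 0$ to bound $\int_{-t}^{-k}\|\ovz(s)\|^2_{\lL^4(\S)}\,ds \le \int_{-t}^{0}\|\ovz(s)\|^2_{\lL^4(\S)}\,ds$. Next, because $\omega\in\hat\Omega(\xi,E)$, the ergodic identity \eqref{equ:Ezalpha} (applied to the stationary, integrable process $s\mapsto \|\ovz_\alpha(s)\|^2_{\lL^4(\S)}$) together with the choice of $\alpha$ gives
\[
 \lim_{t\to\infty}\frac1t\int_{-t}^{0}\|\ovz(s)\|^2_{\lL^4(\S)}\,ds = \E\|\ovz_\alpha(0)\|^2_{\lL^4(\S)} \le \frac{\nu^2\lambda_1}{6C^2} < \frac{\nu^2\lambda_1}{4C^2}.
\]
Hence there is $T_0=T_0(\omega)$ with $\int_{-t}^{0}\|\ovz(s)\|^2_{\lL^4(\S)}\,ds \le \frac{\nu^2\lambda_1}{4C^2}\,t$ for $t\ge T_0$, so that for $t_{n^{(k)}}\ge T_0$,
\[
 E_{n^{(k)}} \le -\nu\lambda_1 t_{n^{(k)}} + \frac{3C^2}{\nu}\cdot\frac{\nu^2\lambda_1}{4C^2}\,t_{n^{(k)}} = -\tfrac14\nu\lambda_1 t_{n^{(k)}} \longrightarrow -\infty .
\]
Therefore $I_{n^{(k)}} \le R^2 e^{-\nu\lambda_1 t_{n^{(k)}}/4}\to 0$ as $n^{(k)}\to\infty$, which yields $\limsup_{n^{(k)}\to\infty} I_{n^{(k)}} = 0$.

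This argument is essentially bookkeeping; there is no serious obstacle. The only point requiring a word of care is that the norm appearing in $I_{n^{(k)}}$ is the $\lL^4(\S)$-norm while the ergodic statement \eqref{equ:Ezalpha} is phrased with the $X$-norm. I would handle this by applying Birkhoff's ergodic theorem directly to the integrable stationary process $\|\ovz_\alpha(\cdot)\|^2_{\lL^4(\S)}$ (equivalently, by intersecting $\hat\Omega(\xi,E)$ with the full-measure $\vartheta$-invariant set on which this additional time average converges), which costs nothing and is consistent with the choice of $\alpha$.
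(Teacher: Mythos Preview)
Your argument is correct and follows essentially the same route as the paper: bound the prefactor by the radius of $B$, then use the ergodic time average together with the choice of $\alpha$ to force the exponent to $-\infty$. The only cosmetic difference is that the paper keeps the integration interval $[-t_{n^{(k)}},-k]$ and obtains the constant $\tfrac12$ instead of your $\tfrac14$, and—more to the point—it resolves the $\lL^4$ versus $X$ norm issue simply by invoking the contractive embedding $X=\lL^4(\S)\cap H\hookrightarrow\lL^4(\S)$, so that $\|\ovz(s)\|_{\lL^4}\le\|\ovz(s)\|_X$ and the ergodic identity \eqref{equ:Ezalpha} applies directly without touching $\hat\Omega(\xi,E)$; this is cleaner than intersecting with a further full-measure set.
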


\begin{lemma}\label{lem:8_5}
$\limsup_{ n^{(k)} \rightarrow \infty } II_{n^{(k)}} = 0.$
\end{lemma}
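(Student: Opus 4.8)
The plan is to exploit the competition, inside the exponent defining $II_{\nk}$, between the linear (Poincar\'e) rate $-\nu\lambda_1 t_{\nk}$ and the rate produced by the time average of $\|\ovz(s)\|^2_{\lL^4(\S)}$: the parameter $\alpha$ has been chosen precisely so that this average is, in the limit, small enough for the net rate to be strictly negative, and a strictly negative exponential kills the at most polynomial growth of $\|\ovz(t_{\nk})\|^2$.

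First I would record the growth of the Ornstein--Uhlenbeck process. By its very construction $\ovz=\ovz_\alpha(\omega)$ belongs to the space $C_{1/2}(\R,X)$ (see the paragraph preceding \eqref{OUequ}, together with Proposition~\ref{zhat well def} and \eqref{equ:zhatgrow}), so there is a constant $C_\omega<\infty$, depending on $\omega$ only, with
\[
  \|\ovz(t)\|\le\|\ovz(t)\|_X\le C_\omega\bigl(1+|t|^{1/2}\bigr),\qquad t\in\R,
\]
and in particular $\|\ovz(t_{\nk})\|^2\le 2C_\omega^2\bigl(1+t_{\nk}\bigr)$ for all $k$ and all $\nk$.

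Next I would control the exponent. Since $\omega$ has been chosen in a $\vartheta$-invariant set of full $\bbP$-measure on which the strong law of large numbers holds for the stationary ergodic process $\ovz_\alpha$ (Proposition~\ref{pro:stationary}, cf.\ \eqref{equ:Ezalpha}), applying it to the functional $x\mapsto\|x\|^2_{\lL^4(\S)}$ gives
\[
  \lim_{t\to\infty}\frac1t\int_{-t}^{0}\|\ovz(s)\|^2_{\lL^4(\S)}\,ds=\E\|\ovz_\alpha(0)\|^2_{\lL^4(\S)}\le\frac{\nu^2\lambda_1}{6C^2},
\]
the last inequality being the defining property of our choice of $\alpha$ (recall that $\E\|\ovz_\alpha(0)\|^2_X\to0$ as $\alpha\to\infty$). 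Fix $\varepsilon>0$ so small that $\eta:=\nu\lambda_1-\tfrac{3C^2}{\nu}\bigl(\tfrac{\nu^2\lambda_1}{6C^2}+\varepsilon\bigr)=\tfrac{\nu\lambda_1}{2}-\tfrac{3C^2\varepsilon}{\nu}>0$. Then there is $T_0=T_0(\omega)$ with $\int_{-t}^{0}\|\ovz(s)\|^2_{\lL^4(\S)}\,ds\le\bigl(\tfrac{\nu^2\lambda_1}{6C^2}+\varepsilon\bigr)t$ for all $t\ge T_0$; since $k\ge1$ and the integrand is nonnegative, $\int_{-t_{\nk}}^{-k}\le\int_{-t_{\nk}}^{0}$, so for every $\nk$ with $t_{\nk}\ge T_0$,
\[
  -\nu\lambda_1 t_{\nk}+\frac{3C^2}{\nu}\int_{-t_{\nk}}^{-k}\|\ovz(s)\|^2_{\lL^4(\S)}\,ds\le-\eta\,t_{\nk}.
\]

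Combining the two steps, $II_{\nk}\le 2C_\omega^2(1+t_{\nk})e^{-\eta t_{\nk}}$ once $t_{\nk}\ge T_0$, and since $t_{\nk}\to\infty$ as $\nk\to\infty$ the right-hand side tends to $0$, which yields $\limsup_{\nk\to\infty}II_{\nk}=0$. The only genuine content is the balancing of exponents in the third step; everything else is bookkeeping. The one point to be careful about is that the strong law in \eqref{equ:Ezalpha} is stated with the $X$-norm, whereas here I need it for $\|\cdot\|_{\lL^4(\S)}$; this is harmless because $\ovz_\alpha$ is ergodic and $\|\cdot\|^2_{\lL^4(\S)}$ is a continuous, $\bbP$-integrable functional on $X$, so its time averages converge on a (possibly further intersected) $\vartheta$-invariant set of full measure, which we may assume contains $\omega$.
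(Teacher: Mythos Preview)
Your argument is correct and mirrors the paper's own proof: both use the strong law of large numbers \eqref{equ:Ezalpha} to make the exponent eventually bounded by $-\tfrac{\nu\lambda_1}{2}t_{\nk}$ (the paper records this as \eqref{equ:8_20}), and both kill the at most polynomially growing prefactor $\|\ovz(-t_{\nk})\|^2$ coming from \eqref{equ:zhatgrow} against this exponential decay. The only cosmetic differences are that you invoke the sharper $|t|^{1/2}$ growth whereas the paper uses the cruder linear bound \eqref{equ:8_22}, and that the paper sidesteps your last caveat by applying the SLLN in the $X$-norm and then using the contractive embedding $X\hookrightarrow\lL^4(\S)$ rather than re-running the ergodic theorem for $\|\cdot\|^2_{\lL^4}$.
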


\begin{lemma}\label{lem:8_6}
$\int_{-\infty}^0 \|\vg(s)\|^2_{V^\prime} e^{ -\nu\lambda_1 s + \frac{3C^2}{\nu}
                      \int_s^{0}\|\ovz(\zeta)\|^2_{\lL^4}d\zeta  } < \infty.$
\end{lemma}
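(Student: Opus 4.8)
The integrand is a product of $\|\vg(s)\|^2_{V'}$, which I will show grows only polynomially in $|s|$ as $s\to-\infty$, and an exponential weight which I will show decays; the decay is where the choice of the auxiliary parameter $\alpha$ and the ergodicity of $\ovz_\alpha$ enter. (Consistently with \eqref{equ:8_11}, the weight should be read as $e^{\nu\lambda_1 s+\frac{3C^2}{\nu}\int_s^0\|\ovz(\zeta)\|^2_{\lL^4}d\zeta}$, i.e. $e^{-\nu\lambda_1|s|+\dots}$ for $s\le 0$, so that it indeed decays as $s\to-\infty$.) First I would estimate $\vg$. Since $\vg(s)=\alpha\ovz(s)-\B(\ovz(s),\ovz(s))$ and $\ovz(s)\in X=\lL^4(\S)\cap H$ by Proposition~\ref{pro:stationary}, using the continuous embedding $H\hookrightarrow V'$ for the first term and \eqref{equ:Bu L4} for the second,
$$\|\vg(s)\|_{V'}\le C\alpha\,\|\ovz(s)\|+C\,\|\ovz(s)\|^2_{\lL^4(\S)}\le C_\alpha\big(1+|\ovz(s)|_X\big)|\ovz(s)|_X,$$
so $\|\vg(s)\|^2_{V'}\le C_\alpha\big(|\ovz(s)|^2_X+|\ovz(s)|^4_X\big)$. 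Since $\ovz=\ovz_\alpha(\omega)\in C_{1/2}(\R,X)$, estimate \eqref{equ:zhatgrow} gives $|\ovz(s)|_X\le C(\omega)(1+|s|^{1/2})$, hence $\|\vg(s)\|^2_{V'}\le C(\omega)(1+s^2)$ for all $s\le 0$.

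Next I would control the weight using that $\ovz_\alpha$ is a stationary ergodic $X$-valued process (Proposition~\ref{pro:stationary}). Applying the ergodic theorem exactly as in the derivation of \eqref{equ:Ezalpha}, but to the continuous functional $x\mapsto\|x\|^2_{\lL^4(\S)}$ on $X$ (integrable since $\E|\ovz_\alpha(0)|^2_X<\infty$), for $\omega$ in the invariant full-measure set one obtains
$$\lim_{t\to\infty}\frac1t\int_{-t}^0\|\ovz(\zeta)\|^2_{\lL^4(\S)}\,d\zeta=\E\|\ovz_\alpha(0)\|^2_{\lL^4(\S)}\le\frac{\nu^2\lambda_1}{6C^2},$$
the last bound being precisely the property for which $\alpha$ was chosen. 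Therefore, given $\epsilon>0$ there is $S=S(\omega,\epsilon)<0$ with $\frac{3C^2}{\nu}\int_s^0\|\ovz(\zeta)\|^2_{\lL^4}\,d\zeta\le\big(\tfrac{\nu\lambda_1}{2}+\epsilon\big)|s|$ for $s\le S$, so on $(-\infty,S]$ the exponent is at most $-\nu\lambda_1|s|+\big(\tfrac{\nu\lambda_1}{2}+\epsilon\big)|s|=-\big(\tfrac{\nu\lambda_1}{2}-\epsilon\big)|s|$.

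To conclude, I would split $\int_{-\infty}^0=\int_{-\infty}^S+\int_S^0$: the second integral is finite because the integrand is continuous on the compact interval $[S,0]$, while, choosing $\epsilon<\nu\lambda_1/2$ and writing $\kappa=\tfrac{\nu\lambda_1}{2}-\epsilon>0$,
$$\int_{-\infty}^S\|\vg(s)\|^2_{V'}\,e^{\nu\lambda_1 s+\frac{3C^2}{\nu}\int_s^0\|\ovz(\zeta)\|^2_{\lL^4}d\zeta}\,ds\le C(\omega)\int_{-\infty}^S(1+s^2)\,e^{-\kappa|s|}\,ds<\infty.$$
The one genuinely non-routine point is the ergodic estimate of the middle step: it forces the time-average of $\|\ovz(\zeta)\|^2_{\lL^4}$ to grow linearly in $|s|$ with slope strictly below $\nu\lambda_1/2$, which is the whole reason $\alpha$ was taken large with $\E\|\ovz_\alpha(0)\|^2_{\lL^4}\le\nu^2\lambda_1/(6C^2)$; against a weight that need not decay the polynomial factor $\|\vg(s)\|^2_{V'}$ would be useless. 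The remaining ingredients — the Gelfand-triple embedding, the bound \eqref{equ:Bu L4} on $\B$, and the sublinear growth \eqref{equ:zhatgrow} of the Ornstein–Uhlenbeck trajectory — are all already available in the text.
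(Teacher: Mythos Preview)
Your proof is correct and follows essentially the same strategy as the paper: bound $\|\vg(s)\|^2_{V'}$ by $\alpha^2\|\ovz(s)\|^2_{V'}+C\|\ovz(s)\|^4_{\lL^4}$ and use the sublinear growth \eqref{equ:zhatgrow} of $\ovz$ to get polynomial growth in $|s|$, then use the ergodic limit (the choice of $\alpha$) to show the exponential weight decays like $e^{\nu\lambda_1 s/2}$ for $s$ large negative, so that the product is integrable. One minor point: the paper applies the ergodic identity \eqref{equ:Ezalpha} in the $X$-norm (for which $\hat\Omega$ was constructed) and then uses the contraction $X\hookrightarrow\lL^4(\S)$, rather than invoking a separate ergodic limit for $\|\cdot\|^2_{\lL^4}$; your version is fine but would strictly require enlarging the full-measure set, whereas the contraction argument works directly on $\hat\Omega$.
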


\begin{lemma}\label{lem:8_7}
$\int_{-\infty}^0  e^{ -\nu\lambda_1 s + \frac{3C^2}{\nu}
                      \int_s^{0}\|\ovz(\zeta)\|^2_{\lL^4}d\zeta  } < \infty.$
\end{lemma}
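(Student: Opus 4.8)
The plan is to derive the integrability in Lemma~\ref{lem:8_7} from the strong law of large numbers \eqref{equ:Ezalpha} for the stationary ergodic process $\ovz=\ovz_\alpha$, together with the calibration of $\alpha$ fixed just before Proposition~\ref{prop:ACprop}, namely $\E\|\ovz_\alpha(0)\|^2_{\lL^4}\le\frac{\nu^2\lambda_1}{6C^2}$. The exponent in the integrand is the one that enters the tail of the dominating function $h$ in \eqref{equ:8_19} (equivalently, through \eqref{equ:8_16} and \eqref{equ:8_11}), where the linear term contributes the decaying factor $e^{\nu\lambda_1 s}$ as $s\to-\infty$. The calibration of $\alpha$ is precisely what forces the linear growth rate of the random weight $\frac{3C^2}{\nu}\int_s^0\|\ovz(\zeta)\|^2_{\lL^4}\,d\zeta$ to stay strictly below $\nu\lambda_1$, so that this linear term wins and the whole integrand decays exponentially for very negative $s$; the integral then splits into an exponentially small tail and a finite contribution over a compact interval.

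Concretely, I would first fix $\omega$ in the $\vartheta$-invariant full-measure set $\hat{\Omega}(\xi,E)$ on which \eqref{equ:Ezalpha} holds, and apply \eqref{equ:Ezalpha} to the stationary ergodic, $\lL^4(\S)$-valued process $t\mapsto\|\ovz_\alpha(t)\|^2_{\lL^4}$. Writing $s=-t$, this furnishes, for every $\eps>0$, a finite random threshold $S=S(\omega,\eps)$ such that $\frac{1}{|s|}\int_s^0\|\ovz(\zeta)\|^2_{\lL^4}\,d\zeta\le\E\|\ovz_\alpha(0)\|^2_{\lL^4}+\eps$ whenever $s\le-S$. Choosing $\eps$ so small that $\frac{3C^2}{\nu}\big(\E\|\ovz_\alpha(0)\|^2_{\lL^4}+\eps\big)\le\frac{3\nu\lambda_1}{4}$, the exponent is bounded above, for $s\le-S$, by $\nu\lambda_1 s+\frac{3\nu\lambda_1}{4}|s|=-\frac{\nu\lambda_1}{4}|s|$, so that the integrand is dominated by $e^{-\frac{\nu\lambda_1}{4}|s|}$ on $(-\infty,-S]$, which is integrable there.

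On the remaining compact interval $[-S,0]$ the map $s\mapsto\int_s^0\|\ovz(\zeta)\|^2_{\lL^4}\,d\zeta$ is finite and continuous, since $\ovz_\alpha\in C_{1/2}(\R,X)$ by Proposition~\ref{zhat well def} and $X\hookrightarrow\lL^4(\S)$; hence the integrand is bounded on $[-S,0]$ and contributes a finite amount. Adding the two pieces proves Lemma~\ref{lem:8_7}. Lemma~\ref{lem:8_6} follows from the same decay estimate after inserting the extra factor $\|\vg(s)\|^2_{V^\prime}$: since $\vg=\alpha\ovz-\B(\ovz,\ovz)$ is controlled in $V^\prime$ via Lemma~\ref{lem:bL4} together with the polynomial growth bound \eqref{equ:zhatgrow}, $\|\vg(s)\|^2_{V^\prime}$ grows at most polynomially in $|s|$ and is absorbed by the exponential tail.

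The main obstacle is upgrading the almost-sure ergodic limit \eqref{equ:Ezalpha} into a pathwise estimate usable inside an integral carrying an exponential weight: the threshold $S(\omega,\eps)$ is random and only almost surely finite, so the argument must be run on the invariant full-measure set $\hat{\Omega}(\xi,E)$, and one must ensure the residual decay rate $\frac{\nu\lambda_1}{4}$ obtained after subtracting the ergodic growth is strictly positive. This strict positivity is exactly what the calibration $\E\|\ovz_\alpha(0)\|^2_{\lL^4}\le\frac{\nu^2\lambda_1}{6C^2}$ guarantees, and it cannot be dispensed with.
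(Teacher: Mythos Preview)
Your argument is correct and is essentially the same as the paper's. The paper's proof is compressed into two lines: it sets $p(s)=\nu\lambda_1 s+\frac{3C^2}{\nu}\int_s^0\|\ovz(\zeta)\|^2_{\lL^4}\,d\zeta$ and, invoking the ergodic estimate already established in the proof of Lemma~\ref{lem:8_4} (i.e.\ inequality \eqref{equ:8_20}), concludes $p(s)<\frac{\nu\lambda_1}{2}s$ for $s\le s_0$, hence $\int_{-\infty}^0 e^{p(s)}\,ds<\infty$. Your write-up simply makes this explicit: you spell out the SLLN step \eqref{equ:Ezalpha}, the choice of $\eps$, and the split into the tail $(-\infty,-S]$ and the compact piece $[-S,0]$, obtaining the slightly weaker but equally sufficient decay rate $-\frac{\nu\lambda_1}{4}|s|$. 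You also correctly read the intended sign of the exponent ($+\nu\lambda_1 s$, i.e.\ decay as $s\to-\infty$), which is a typo in the displayed statement but clear from \eqref{equ:8_10}--\eqref{equ:8_11} and the paper's own proof.

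One small precision: the SLLN \eqref{equ:Ezalpha} is stated for the $X$-norm, and in the proof of Lemma~\ref{lem:8_4} the paper passes to the $\lL^4$-norm via the contractive embedding $X\hookrightarrow\lL^4(\S)$, using the calibration $\E|\ovz_\alpha(0)|^2_X<\frac{\nu^2\lambda_1}{6C^2}$. Your direct application of the ergodic theorem to $\|\ovz_\alpha(\cdot)\|^2_{\lL^4}$ is also legitimate (it is an integrable functional of a stationary ergodic process), but you should either note this or route through the $X$-norm as the paper does.
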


\begin{proof}[Proof of Lemma~\ref{lem:8_4}]
We recall that for $\alpha \in \nN$, $\ovz(t) = \ovz_\alpha(t)$, $t \in \R$, being the
Ornstein-Uhlenbeck process from Section \ref{OU}, one has
\[
 \E\|\ovz(0)\|^2_X = \E \|\ovz_\alpha(0)\|^2_X < \frac{\nu^2\lambda_1}{6 C^2}.
\]
Let us recall that the space $\hat{\Omega}(\xi,E)$ was constructed in such a way that
\[
 \lim_{ n^{(k)} \rightarrow \infty} \frac{1}{-k-(-t_{n^{(k)}})} \int_{t_{n^{(k)}} }^{-k}
  \| \ovz_\alpha(s)\|^2_X ds = \E\|\ovz(0)\|^2_X < \infty.
\]
Therefore, since the embedding $X \hookrightarrow \lL^4(\S)$ is a contraction, we have for $n^{(k)}$ sufficiently large,
\be\label{equ:8_20}
 \frac{3C^2}{\nu} \int_{t_{n^{(k)}} }^{-k}\| \ovz_\alpha(s)\|^2_{\lL^4} ds < \frac{\nu\lambda_1}{2}(t_{n^{(k)}} - k).
\ee
Since the set $B$ is bounded in $H$, there exists $\rho_1 >0$ such that for all $n^{(k)}$,
$\|\x_{n^{(k)}}\| \le \rho_1$. Hence
\be\label{equ:8_21}
 \limsup_{ n^{(k)} \rightarrow \infty} \|\x_{n^{(k)}}\|^2 e^{ -\nu \lambda_1 t_{n^{(k)}} +
              \frac{3C^2}{\nu} \int_{-t_{n^{(k)}} }^{-k} \|\ovz(s)\|^2_{\lL^4} ds  }
              \le \limsup_{ n^{(k)} \rightarrow \infty} \rho_1^2 e^{ -\frac{\nu\lambda_1}{2}(t_{n^{(k)}} - k)  } = 0.
\ee
\end{proof}

\begin{proof}[Proof of Lemma~\ref{lem:8_7}]
We denote by $p(s) = \nu \lambda_1 s + \frac{3C^2}{\nu} \int_s^0 \|\ovz(s)\|^2_{\lL^4}$. As in the proof of
Lemma~\ref{lem:8_4} we have, for $s\le s_0$, $p(s) < \frac{\nu \lambda_1}{2} s$. Hence $\int_{-\infty}^0 e^{p(s)}ds < \infty$,
as required.
\end{proof}

\begin{proof}[Proof of Lemma~\ref{lem:8_5}]
Because of \eqref{equ:zhatgrow}, we can find $\rho_2 \ge 0$ and $s_0 <0$, such that,
\be\label{equ:8_22}
\max\left( \frac{\|\ovz(s)\|} { |s|} , \frac{\|\ovz(s)\|_{V^\prime}}{|s|}, \frac{\|\ovz(s)\|_{\lL^4}}{|s|} \right) \le \rho_2,
\quad \mbox{ for } s \le s_0.
\ee
Hence by \eqref{equ:8_20} we infer that
\be\label{equ:8_23}
\begin{aligned}
&\limsup_{ n^{(k)} \rightarrow \infty} \|\ovz(-t_{n^{(k)}})\|^2
       e^{\int_{-t_{n^{(k)}} }^{-k} (-\nu \lambda_1 + \frac{3C^2}{\nu}\|\ovz(s)\|^2) ds  } \\
& \qquad \le \limsup_{ n^{(k)} \rightarrow \infty} \frac{\|\ovz(-t_{n^{(k)}})\|^2 }{|t_{n^{(k)}}|^2}
             \limsup_{ n^{(k)} \rightarrow \infty} |t_{n^{(k)}}|^2  e^{-\frac{\nu\lambda_1}{2}( t_{n^{(k)}}-k ) }  \le 0.
\end{aligned}
\ee
This concludes the proof of Lemma~\ref{lem:8_5}.
\end{proof}

\begin{proof}[Proof of Lemma~\ref{lem:8_6}]
Since
\[\|\vg(s)\|^2_{V^\prime} = \|\alpha \ovz(s) + 2\B(\ovz(s))\|^2_{V^\prime} \le 2 \alpha^2 \|\ovz(s)\|^2_{V^\prime} + 2C \|\ovz(s)\|^4_{\lL^4},\]
we only need to show that
\[
\int_{-\infty}^0 \|\ovz(s)\|^4_{\lL^4}e^{\nu\lambda_1 s + \frac{3C^2}{\nu}\int_s^0\|\ovz(\zeta)\|^2_{\lL^4}d\zeta} ds +
\int_{-\infty}^0 \|\ovz(s)\|^2_{V^\prime}e^{\nu\lambda_1 s + \frac{3C^2}{\nu}\int_s^0\|\ovz(\zeta)\|^2_{\lL^4}d\zeta} ds<\infty.
\]

It is enough to consider the case of $\|\ovz(s)\|^4_{\lL^4}$ since the proof will be similar for the remaining case.
Reasoning as in \eqref{equ:8_20}, we can find $t_0 \ge 0$ such that for $ t \ge t_0$,
\[
 \int_{-t}^{-t_0} \left( -\nu\lambda_1 + \frac{3C^2}{\nu} \|\ovz(\zeta)\|_{\lL^4}^2   \right) d\zeta
  \le -\frac{\nu\lambda_1}{2} (t-t_0).
\]
Taking into account the inequality \eqref{equ:8_22}, we have $\|\ovz(t)\| \le \rho_2(1+|t|)$, $t \in \R$.
Therefore, with
\[\rho_3 := \exp( \int_{-t_0}^0 (-\nu\lambda_1 + \frac{3C^2}{\nu}\|\ovz(\zeta)\|^2_{\lL^4})d\zeta,\]
we have
\[
\begin{aligned}
&\int_{-\infty}^{-t_0} \|\ovz(s)\|^4_{\lL^4}
   e^{\int_{s}^0 (\nu\lambda_1  + \frac{3C^2}{\nu}\|\ovz(\zeta)\|^2_{\lL^4})d\zeta} ds \\
& \qquad = \rho_3 \int_{-\infty}^{-t_0} \|\ovz(s)\|^4_{\lL^4}
   e^{\int_{s}^{-t_0} (\nu\lambda_1  + \frac{3C^2}{\nu} \|\ovz(\zeta)\|^2_{\lL^4})d\zeta} ds \\
& \qquad \le \rho_2^4 \rho_3 e^{\nu\lambda_1 t_0/2}  \int_{-\infty}^{t_0} |s|^4 e^{\nu\lambda_1 s/2} ds < \infty.
\end{aligned}
\]
By the continuity of all relevant functions, we can let $t_0 \rightarrow 0$ to get the result.
\end{proof}
This concludes the proof of \eqref{equ:8_19}, and it only remains to complete the proof of \eqref{equ:8_17}.
Let us denote by
\begin{align*}
\vv_\nk(s) &= \vv(s,-k;\omega, \varphi(t_\nk - k, \vartheta_{-t_\nk}\omega) \x_\nk - \ovz(-k)), \;\; s \in (-k,0),\\
\vv_k(s) &= \vv(s,-k;\omega, \vy_{-k} - \ovz(-k)), \;\; s \in (-k,0).
\end{align*}

From \eqref{def:ymk} and Lemma~\ref{lem:weakcont1} we infer that
\be\label{equ:8_24}
\vv_{\nk} \rightarrow \vv_k \mbox{ weakly in } L^2(-k,0;V).
\ee

Since $e^{\nu\lambda_1 \cdot} \vg, \; e^{\nu \lambda_1 \cdot} \vf \in L^2(-k,0;V^\prime)$, we get
\be\label{equ:8_25}
 \lim_{\nk \rightarrow \infty} \int_{-k}^0 e^{\nu\lambda_1 s} \langle \vg(s), \vv_{\nk}(s)\rangle ds
    = \int_{-k}^0 e^{\nu\lambda_1 s} \langle \vg(s),\vv_k(s)\rangle ds
\ee
and
\be\label{equ:8_26}
 \lim_{\nk \rightarrow \infty} \int_{-k}^0 e^{\nu\lambda_1 s} \langle \vf, \vv_{\nk}(s)\rangle ds
    = \int_{-k}^0 e^{\nu\lambda_1 s} \langle \vf,\vv_k(s)\rangle ds.
\ee

Using the same methods as those in the proof of Theorem~\ref{thm:existence}, we can find a subsequence of $\{\vv_\nk\}$, which, for the sake of notational simplicity simplicity, is still denoted as $\{\vv_\nk\}$, that satisfies
\be\label{equ:8_27}
 \vv_{\nk} \rightarrow \vv_k \mbox{ strongly in} L^2(-k,0;\lL^2_{\loc}(\S)).
\ee

Next, since $\ovz(t)$ is an $\lL^4$-valued process, so is $e^{\nu\lambda_1 t} \ovz(t)$. Thus by
Corollary~\ref{cor:bvmvmL4}, \eqref{equ:8_24} and \eqref{equ:8_27}, we infer that
\be\label{equ:8_28}
\begin{aligned}
&\lim_{\nk \rightarrow \infty} \int_{-k}^0 e^{\nu \lambda_1 s} b(\vv_{\nk}(s), \ovz(s),\vv_{\nk}(s)) ds \\
&\qquad =  \int_{-k}^0 e^{\nu \lambda_1 s} b(\vv_k(s),\ovz(s),\vv_k(s)) ds.
\end{aligned}
\ee

Moreover, since the norms $[\cdot]$ and $\|\cdot\|$ are equivalent on $V$, and for any $s \in (-k,0]$,
$e^{-\nu k \lambda_1} \le e^{\nu \lambda_1 s} \le 1$,
we invoke \eqref{equ:8_24} to obtain,
\[
\int_{-k}^0 e^{\nu \lambda_1 s} [\vv_k(s)]^2 ds \le
\liminf_{\nk\rightarrow \infty} \int_{-k}^0 e^{\nu\lambda_1s} [\vv_{\nk}(s)]^2 ds.
\]
Equivalently, ,
\be\label{equ:8_29}
\limsup_{\nk \rightarrow \infty}
\left(-\int_{-k}^0 e^{\nu\lambda_1 s} [\vv_{\nk}(s)]^2 ds \right)\le - \int_{-k}^0 e^{\nu\lambda_1 s} [\vv_k(s)]^2 ds.
\ee

From \eqref{equ:8_16}, \eqref{equ:8_19}, \eqref{equ:8_28} and \eqref{equ:8_29} we infer that
\be\label{equ:8_30}
\begin{aligned}
&\limsup_{\nk \rightarrow \infty} \|\varphi(t_{\nk},\vartheta_{-t_{\nk}}\omega)\x_{\nk} - \ovz(0)\|^2 \\
&\qquad \le \int_{-\infty}^{-k} h(s) ds + 2\int_{-k}^0 e^{\nu\lambda_1 s}
\big\{  b(\vv_k(s),\ovz(s),\vv_k(s)) \\
&\qquad \qquad \qquad+ \langle \vg(s),\vv_k(s) \rangle + \langle\vf,\vv_k(s)\rangle - [\vv_k(s)]^2 \big\} ds
\end{aligned}
\ee

On the other hand, from \eqref{equ:weaklim} and \eqref{equ:8_11}, we have
\be\label{equ:8_31}
\begin{aligned}
\|\vy_0 - \ovz(0)\|^2 &= \|\varphi(k,\vartheta_{-k}\omega)\vy_k - \ovz(0)\|^2 = \|\vv(0,-k;\omega,\vy_k-\ovz(-k))\|^2 \\
 &=\|\vy_k - \ovz(-k)\|^2 e^{-\nu\lambda_1 k} + 2 \int_{-k}^0 e^{\nu \lambda_1 s} \big\{ \langle \vg(s),\vv_k(s) \rangle\\
 &+ b(\vv_k(s),\ovz(s),\vv_k(s)) + \langle \vf,\vv_k(s) \rangle - [\vv_k(s)]^2 \big\} ds.
\end{aligned}
\ee

Hence, by combining \eqref{equ:8_30} with \eqref{equ:8_31}, we get
\[
\begin{aligned}
&\limsup_{\nk\rightarrow \infty} \|\varphi(t_{\nk}, \vartheta_{-t_{\nk}}\omega)\x_{\nk} - \ovz(0)\|^2 \\
&\quad\le \int_{-\infty}^{-k} h(s) ds + \|\vy_0 - \ovz(0)\|^2 - \|\vy_k - \ovz(-k)\|^2 e^{-\nu\lambda_1 k} \\
&\quad\le \int_{-\infty}^{-k} h(s) ds + \|\vy_0 - \ovz(0)\|^2,
\end{aligned}
\]
which proves \eqref{equ:8_17}, and Proposition~\ref{prop:ACprop} follows.
\end{proof}

\begin{theorem}\label{main}
Consider the metric dynamical system $\frakT = (\hat{\Omega}(\xi,E),\hat{\calF},\hat{\bbP},\hat{\vartheta})$,
and consider the RDS $\varphi$ over $\frakT$ generated by the stochastic Navier-Stokes equations on
the 2D-unit sphere \eqref{sNSEs} with additive
noise. Then the RDS $\varphi$ is asymptotically compact.
\end{theorem}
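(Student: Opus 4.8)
The plan is to reduce Theorem~\ref{main} to Proposition~\ref{prop:ACprop}, which says that the RDS $\varphi$ is asymptotically compact provided that for any bounded set $B\subset H$ there exists a closed and bounded random set $K(\omega)$ absorbing $B$. Thus the whole task is to construct such an absorbing set. First I would fix the auxiliary parameter $\alpha\ge 0$ large enough so that $\E\|\ovz_\alpha(0)\|^2_{\lL^4}\le\frac{\nu^2\lambda_1}{6C^2}$; this is possible by Proposition~\ref{pro:stationary}, since $\E|\ovz_\alpha(0)|_X^2\to 0$ as $\alpha\to\infty$ and the embedding $X\hookrightarrow\lL^4(\S)$ is a contraction. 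With this choice, the key quantitative input is the energy inequality \eqref{equ:8_10} from Lemma~\ref{lem:NSEsol}, applied on an interval $[\tau,0]$ with $\tau=-t\to-\infty$.

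Next I would define the candidate radius function. Starting from \eqref{equ:8_10} with $t=0$, $\vv(\tau)=\x-\ovz_\alpha(\tau)$ and $\x\in B$, one gets
\[
\|\vv(0,-t;\omega,\x-\ovz(-t))\|^2 \le \|\x-\ovz(-t)\|^2 e^{-\nu\lambda_1 t+\frac{3C^2}{\nu}\int_{-t}^0\|\ovz(s)\|^2_{\lL^4}ds} + \frac{3}{\nu}\int_{-t}^0(\|\vg(s)\|^2_{V'}+\|\vf\|^2_{V'})e^{\nu\lambda_1 s+\frac{3C^2}{\nu}\int_s^0\|\ovz(\zeta)\|^2_{\lL^4}d\zeta}\,ds.
\]
By the construction of $\hat{\Omega}(\xi,E)$ and the ergodic identity \eqref{equ:Ezalpha}, together with the choice of $\alpha$, for $t$ large we have $\frac{3C^2}{\nu}\int_{-t}^0\|\ovz(s)\|^2_{\lL^4}ds\le\frac{\nu\lambda_1}{2}t$, so the first term decays like $e^{-\nu\lambda_1 t/2}$ times $\|\x-\ovz(-t)\|^2$, which by \eqref{equ:8_22} is dominated by $\rho^2 e^{-\nu\lambda_1 t/2}(1+|t|)^2\to 0$ (here $\rho$ bounds $B$ in $H$). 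The integral term is bounded, uniformly in $t$, by the convergent quantity $\frac{3}{\nu}\int_{-\infty}^0(\|\vg(s)\|^2_{V'}+\|\vf\|^2_{V'})e^{\nu\lambda_1 s+\frac{3C^2}{\nu}\int_s^0\|\ovz(\zeta)\|^2_{\lL^4}d\zeta}\,ds=:R_1(\omega)^2$, finite by Lemmas~\ref{lem:8_6} and~\ref{lem:8_7}. Hence $\limsup_{t\to\infty}\|\vv(0,-t;\omega,\x-\ovz(-t))\|\le R_1(\omega)$ uniformly for $\x\in B$, and therefore $\|\varphi(t,\vartheta_{-t}\omega)\x\|=\|\vv(0,-t;\omega,\x-\ovz(-t))+\ovz(0)\|\le R_1(\omega)+\|\ovz(0)\|+1=:R(\omega)$ for all sufficiently large $t$. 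I would then set $K(\omega)=\{\vy\in H:\|\vy\|\le R(\omega)\}$, which is closed, bounded, $\vartheta$-measurable (since $\omega\mapsto R_1(\omega)$ and $\omega\mapsto\|\ovz_\alpha(0)(\omega)\|$ are measurable), and by the above absorbs every bounded $B\subset H$; note $R(\omega)$ does not depend on $B$ so a single family works, but even a $B$-dependent one suffices for Proposition~\ref{prop:ACprop}.

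Finally I would invoke Proposition~\ref{prop:ACprop} to conclude that $\varphi$ is asymptotically compact over $\frakT$, which is exactly the assertion of Theorem~\ref{main}. The main obstacle, and the only place needing genuine care, is verifying that the exponential weights beat the growth of $\ovz_\alpha$: one must use \eqref{equ:Ezalpha} on the event $\hat{\Omega}(\xi,E)$ together with the sublinear growth bound \eqref{equ:8_22} to ensure the estimate $\frac{3C^2}{\nu}\int_{-t}^0\|\ovz_\alpha(s)\|^2_{\lL^4}ds\le\frac{\nu\lambda_1}{2}t$ holds for all $t$ beyond some $\omega$-dependent threshold, and that $t_0=t_0(\omega)$ from the Birkhoff convergence is finite for every $\omega\in\hat{\Omega}(\xi,E)$ — this is precisely the content already extracted in Lemmas~\ref{lem:8_4}--\ref{lem:8_7} during the proof of Proposition~\ref{prop:ACprop}, so the argument here is essentially a repackaging of those estimates with the initial time sent to $-\infty$ rather than kept at a fixed $-k$.
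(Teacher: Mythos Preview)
Your proposal is correct and follows essentially the same route as the paper: reduce to Proposition~\ref{prop:ACprop}, apply the energy estimate \eqref{equ:8_10} with $t=0$ and $\tau=-t\to-\infty$, use the ergodic identity \eqref{equ:Ezalpha} on $\hat{\Omega}(\xi,E)$ together with the choice of $\alpha$ to control the exponent, invoke Lemmas~\ref{lem:8_6}--\ref{lem:8_7} for the integral term, and define $K(\omega)$ as a ball of the resulting radius. The only cosmetic difference is that the paper splits $\|\x-\ovz(-t)\|^2\le 2\|\x\|^2+2\|\ovz(-t)\|^2$ and packages the $\|\ovz(-t)\|^2$ contribution into a $\sup_{s\le 0}$ inside the radius $r_1(\omega)$, whereas you send the whole first term to zero before taking the $\limsup$; either way yields a closed bounded absorbing random set independent of $B$.
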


\begin{proof}
Let $B\subset H$ be a bounded set. In view of Proposition~\ref{prop:ACprop}, it is sufficient to prove that
there exists a closed bounded random set $K(\omega) \subset H$ which absorbs $B$. In fact, we will show below
that an even stronger property holds. Namely, that there exists a closed bounded random set $K(\omega) \subset H$
which absorbs every bounded deterministic set $B \subset H$.

Let $\omega \in \Omega$ be fixed. For a given $s\le 0$ and $\x\in H$, let $\vv$ be the solution of \eqref{equ:ode}
on $[s,\infty)$ with the initial condition $\vv(s) = \x - \ovz(s)$. Applying \eqref{equ:8_10} with $t=0$,
$\tau=s \le 0$, we get
\be\label{equ:8_32}
\begin{aligned}
\|\vv(0)\|^2 &\le 2\|\x\|^2 e^{\nu\lambda_1 s + \frac{3C^2}{\nu}\int_s^0 \|\ovz(r)\|^2_{\lL^4} dr}
                 + 2\|\ovz(s)\|^2 e^{\nu\lambda_1 s + \frac{3C^2}{\nu} \int_s^0 \|\ovz(r)\|^2_{\lL^4} dr} \\
& \quad + \frac{3}{\nu}\int_s^0 \{ \|\vg(t)\|^2_{V^\prime} + \|\vf\|^2_{V^\prime} \}
      e^{\nu\lambda_1 t + \frac{3C^2}{\nu} \int_s^0 \|\ovz(r)\|^2_{\lL^4} dr } dt
\end{aligned}
\ee
Set
\[
\begin{aligned}
r_1(\omega)^2 =
  &+ \sup_{s \le 0} \left\{ 2\|\ovz(s)\|^2 e^{\nu\lambda_1 s + \frac{3C^2}{\nu}\int_s^0 \|\ovz(r)\|^2_{\lL^4} dr} \right\}\\
  &+ \frac{3}{\nu} \int_{-\infty}^0 \int_s^0 \{ \|\vg(t)\|^2_{V^\prime} + \|\vf\|^2_{V^\prime} \}
        e^{\nu\lambda_1 t + \frac{3C^2}{\nu} \int_s^0 \|\ovz(r)\|^2_{\lL^4} dr } dt
\end{aligned}
\]
Similar to \eqref{equ:8_23} we can prove that
\[
\limsup_{t_0 \rightarrow -\infty} \|\ovz(t_0)\|^2 e^{\nu\lambda_1 t_0 + \frac{3C^2}{\nu}\int_{t_0}^0 \|\ovz(r)\|^2_{\lL^4}} = 0.
\]
Hence, by the continuity of the map $s\mapsto \ovz(s) \in H$,
\[
  \sup_{s\le 0} \|\ovz(s)\|^2 e^{\nu \lambda_1 s + \frac{3C^2}{\nu}\int_s^0 \|\ovz(r)\|^2_{\lL^4} dr} < \infty
\]
and in conjunction with Lemmas~\ref{lem:8_6} and \ref{lem:8_7}, we infer that
\be\label{equ:8_33}
 r_1^2(\omega) < \infty, \mbox{ for all } \omega \in \Omega.
\ee
On the other hand, given $\rho>0$, by \eqref{equ:8_21} we can find $t_\rho(\omega) \le 0$ such that, for all
$s \le t_\rho(\omega)$,
\[\rho^2 e^{\nu\lambda_1 s + \frac{3C^2}{\nu} \int_s^0 \|\ovz(r)\|^2_{\lL^4} dr} \le 1.\]
 Therefore,
from \eqref{equ:8_32}, if $\|\x\| \le \rho$ and $s \le t_\rho(\omega)$, then
\[\|\vv(0,s;\omega,\x-\ovz(s)\|^2 \le r_1^2(\omega).\]
Thus, we infer that
\[
 \|\vu(0,s;\omega,\x)\| \le \|\vv(0,s;\omega,\x - \ovz(s))\| + \|\ovz(0)\| \le r_2(\omega), \mbox{ for all } \omega \in \Omega,
\]
where $r_2(\omega) = r_1(\omega) + \|\ovz(0,\omega)\|$. From \eqref{equ:8_33} and our assumptions, we infer that for
all $\omega \in \Omega$, $r_2(\omega) < \infty$. Defining $K(\omega) := \{ \vu \in H: \|\vu\| \le r_2(\omega)\}$ concludes the
proof.
\end{proof}
\section{Appendix}
Theorem \ref{anal} below seems to be well known but we could not find a reference with the required statement. A sketch of the argument is presented below.
\par
We will consider differential operators acting in the space $L^p\la\mathbb S^2,T\mathbb S^2\ra$, where $p\in[1,\infty)$. Elements of $L^p\la\mathbb S^2,T\mathbb S^2\ra$ are $p$-integrable vector fields, that is
\[\|\theta\|_p=\la\int_{\mathbb S^2}|\theta(x)|^p\,dx\ra^{1/p},\quad\theta\in L^p\la\mathbb S^2,T\mathbb S^2\ra,\]
where $|\theta(x)|$ is the length of $\theta(x)$ in the tangent space $T_x$, and $dx$ denotes the Riemannian volume on $\mathbb S^2$. We will denote the L\'evy-Civita connection by $\nabla$. It can be shown that the operator $\nabla^\star\nabla$ defined on $C^\infty$ vector fields is essentially selfadjoint in $L^2\la\mathbb S^2,T\mathbb S^2\ra$. This fact is proved in \cite{strichartz} for the Laplace-Beltrami Laplacian and the proof for $\BL$ is an easy modification. The unique selfadjoint extension of $\nabla^\star\nabla$ is called the  \emph{Bochner Laplacian} or \emph{Rough Laplacian} and dented by $\BL$. The \emph{Hodge Laplacian} is given by $\HL=d^\star d+dd^\star$ and by the Weitzenbock formula
\[\HL=\BL+Ric,\]
where $Ric\in C\la\mathbb S^2,T\mathbb S^2\otimes T\mathbb S^2\ra$ is the Ricci tensor on the sphere. Let us note that on the two-dimensional sphere $Ric$ is equal to the metric tensor.
\par
Let  $V$ is a bounded and smooth potential and let $P$ denote the projection of $L^2\la\mathbb S^2,T\mathbb S^2\ra$ onto the kernel of divergence operator. Then on smooth vector fields $u$ defined on $\S$
\[P\HL u=\HL Pu.\]
Therefore
\[P\la\HL u+Vu\ra=\HL u+PVu.\]
For such an operator we can prove the following
\begin{theorem}\label{anal}
The operator $L=-\la \HL+PV\ra$ defined on $C^\infty$ vector fields extends to a generator of an analytic semigroup $\la T_t\ra$ in the space $L^p\la\mathbb S^2,T\mathbb S^2\ra$ for every $p\in(1,\infty)$.
\end{theorem}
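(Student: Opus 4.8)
The plan is to reduce the statement to three facts: first, that $-\HL$ generates an analytic semigroup on $L^p\la\mathbb S^2,T\mathbb S^2\ra$ for every $p\in(1,\infty)$; second, that $PV$ is a bounded operator on that space for the same range of $p$; and third, that the class of generators of analytic semigroups is stable under bounded perturbations, for which one may invoke Corollary~2.2 on p.~81 of \cite{Paz83}, already used elsewhere in the paper. Granting the first two facts, $-\la\HL+PV\ra$ is a bounded perturbation of the generator $-\HL$, hence itself the generator of an analytic semigroup on $L^p$; and since by $L^p$-elliptic regularity the domain of $\HL$ is the Sobolev space $W^{2,p}$, in which $C^\infty\la\mathbb S^2,T\mathbb S^2\ra$ is a core for the graph norm, and the bounded term $PV$ does not alter this domain, the generator is exactly the $L^p$-closure of $L$ restricted to smooth vector fields. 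Thus the real content lies in the first two facts.

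For the analyticity of $e^{-t\HL}$ on $L^p$ I would argue via heat-kernel bounds. By the Weitzenb\"ock identity $\HL=\BL+Ric$, with $Ric$ a bounded smooth symmetric bundle endomorphism (on $\mathbb S^2$ it is the metric tensor, in particular $Ric\ge0$), and by Kato's inequality in the Hess--Schrader--Uhlenbrock domination form, one obtains the pointwise estimate $|e^{-t\HL}\theta(x)|\le\la e^{-t\Lambda}|\theta|\ra(x)$ a.e.\ on $\mathbb S^2$, where $\Lambda\ge0$ is the Laplace--Beltrami operator on functions and $\{e^{-t\Lambda}\}_{t\ge0}$ its (scalar) heat semigroup. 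Since $\mathbb S^2$ is a compact Riemannian manifold, the scalar heat kernel satisfies $p_t(x,y)\le Ct^{-1}e^{-c\,d(x,y)^2/t}$ for $0<t\le1$ and is bounded for $t\ge1$; by the domination the smooth kernel of the self-adjoint nonnegative operator $\HL$ satisfies the same family of bounds. It is then classical (see e.g.\ the monographs of Ouhabaz, \emph{Analysis of Heat Equations on Domains}, or Davies, \emph{Heat Kernels and Spectral Theory}) that a self-adjoint $C_0$-semigroup on $L^2$ with Gaussian upper kernel bounds extends consistently to a $C_0$-semigroup on $L^p$ for $1\le p<\infty$ which is bounded analytic for every $p\in(1,\infty)$ --- the exclusion of $p=1$ being genuine and exactly what forces the restriction in the statement. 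An alternative route avoiding heat kernels is to note that $\HL$ is a positive, elliptic, self-adjoint pseudodifferential operator of order $2$ with trivial kernel (since $b_1(\mathbb S^2)=0$), so by Seeley's calculus its imaginary powers are pseudodifferential operators of order $0$, hence uniformly $L^p$-bounded on bounded $|s|$-intervals; on the UMD space $L^p$ this bounded-imaginary-powers property yields sectoriality of angle $<\pi/2$ and therefore analyticity.

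For the boundedness of $PV$ on $L^p$, $1<p<\infty$, note that multiplication by the bounded smooth potential $V$ is bounded on every $L^p$, so it suffices to know that the Leray--Helmholtz projection $P$ onto $\ker(\Div)$ is bounded on $L^p\la\mathbb S^2,T\mathbb S^2\ra$. On the simply connected sphere there are no nontrivial harmonic fields, so $P$ differs from the identity by $\Grad\,\Lambda^{-1}\Div$ (with $\Lambda^{-1}$ the inverse Laplace--Beltrami acting on the mean-zero functions, and $\Div$ of any vector field having mean zero), and $\Grad\,\Lambda^{-1}\Div$ is a classical pseudodifferential operator of order $0$ on the compact manifold $\mathbb S^2$, hence bounded on $L^p$ for every $p\in(1,\infty)$; this is the $L^p$-version of the Hodge/Helmholtz decomposition. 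Consequently $PV\in\mathcal{L}\la L^p\la\mathbb S^2,T\mathbb S^2\ra\ra$ and the perturbation step applies. The step I expect to be the main obstacle is the first one: locating, or writing out, a clean and citable form of the implication ``Gaussian upper kernel bounds $\Rightarrow$ boundedness and analyticity on $L^p$, $1<p<\infty$'' in the vector-bundle setting --- precisely the point the appendix is meant to settle --- whereas the $L^p$-continuity of $P$ and the bounded-perturbation conclusion are essentially routine.
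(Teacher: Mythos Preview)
Your three-step outline --- analyticity of $e^{-t\HL}$ on $L^p$, boundedness of $PV$, and bounded perturbation via \cite{Paz83} --- is exactly the paper's strategy, and your use of the Hess--Schrader--Uhlenbrock domination $|e^{-t\HL}\theta|\le e^{-t\Lambda}|\theta|$ (available here since $Ric\ge 0$) matches the paper's argument as well. The one genuine difference is in how $L^p$-analyticity is extracted from the domination: you pass through Gaussian upper bounds on the scalar heat kernel and then invoke the Ouhabaz/Davies machinery (or alternatively BIP via Seeley), whereas the paper takes a shorter path by observing that Stein's Theorem~1 on p.~67 of \cite{stein} --- analyticity on $L^p$, $1<p<\infty$, for symmetric sub-Markovian contraction semigroups --- carries over verbatim to the vector-bundle setting once the pointwise domination \eqref{d1} is in hand. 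Your route is correct but heavier; Stein's theorem gives the conclusion in one stroke without kernel estimates. Your detailed justification of the $L^p$-boundedness of $P$ via the order-zero pseudodifferential structure of $\Grad\,\Lambda^{-1}\Div$ is more than the paper supplies (it simply asserts $PV$ is bounded), and is a useful addition.
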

\begin{proof} Let us fix $p \in (1,\infty)$.  It has been proved Theorem 2.4 in \cite{strichartz}, see also \cite{vesentini}, that the operator $\HL$ defined on $C^\infty$ vector fields is essentially self-adjoint in $L^2\la\mathbb S^2,T\mathbb S^2\ra$. We will still denote by $\HL$ the selfadjoint extension of this operator in $L^2\la\mathbb S^2,T\mathbb S^2\ra$. The strongly continuous semigroup $\la e^{-t\HL}\ra$ generated in $L^2\la\mathbb S^2,T\mathbb S^2\ra$ by $-\HL$ is dominated by a $C_0$ semigroup  $\la S^{\mathrm{LB}}(t)\ra$ generated by the Laplace-Beltrami operator $\LB$ acting in $L^2\la\mathbb S^2,\R\ra$. More precisely,
\begin{equation}\label{d1}
\left|e^{-t\HL}f(x)\right|\le S^{\mathrm{LB}}(t)|f|(x),\quad x\in\mathbb S^2,\,\, f\in L^2.
\end{equation}
This fact is essentially proved  in \cite{hess}, where the semigroup domination is proved for the Bochner Laplacian. Since in our case $Ric\ge 0$, the result easily follows. For a direct argument for $\HL$ in a more general situation see p. 106 of \cite{shigekawa}. Estimate \eqref{d1} implies immediately that $\la e^{-t\HL}\ra$ extends to $C_0$-semigroup of contractions on $L^p\la\mathbb S^2,T\mathbb S^2\ra$ for every $p\in[1,\infty)$. It is easy to see that the proof of Theorem 1 on p. 67 of \cite{stein} extends trivially to the semigroup $\la e^{-t\HL}\ra$ if \eqref{d1} holds and therefore $\la e^{-t\HL}\ra$ is analytic in  $L^p\la\mathbb S^2,T\mathbb S^2\ra$. Since $PV$ is a bounded operator on $L^p\la\mathbb S^2,T\mathbb S^2\ra$, the operator $-\HL-PV$ generates an analytic semigroup in every $L^p\la\mathbb S^2,T\mathbb S^2\ra$ by standard arguments, see for example \cite{Paz83}.
\end{proof}

\end{document}